\documentclass[a4paper,12pt,leqno]{amsart}
\usepackage{latexsym}
\usepackage[all]{xy}

\usepackage{amssymb} 
\usepackage{amsmath} 
\usepackage{amscd}
\usepackage{color}
\usepackage{comment}
\usepackage{mathtools}
\usepackage{fancybox}

\definecolor{gray}{gray}{0.5}

\textwidth=15.4cm
\textheight=23.0cm
\topmargin=0.2cm
\oddsidemargin=0.25cm
\evensidemargin=0.25cm

\numberwithin{equation}{section} 

\newtheorem{theorem}{Theorem}[section]
\newtheorem{lemma}[theorem]{Lemma} 
\newtheorem{corollary}[theorem]{Corollary}
\newtheorem{proposition}[theorem]{Proposition} 
 
\newtheorem{remark}[theorem]{Remark}
\newtheorem{example}[theorem]{Example}

\newtheorem*{theoremA*}{Theorem A}
\newtheorem*{theoremB*}{Theorem B}

\allowdisplaybreaks[3]

\def\C{\mathbb C}

\def\Q{\mathbb Q}
\def\Z{\mathbb Z}

\DeclareMathOperator{\rank}{rank}
\DeclareMathOperator{\image}{Im}
\DeclareMathOperator{\Hess}{Hess}

\newcommand{\Emb}[1]{\phi_{#1}}
\newcommand{\PM}{\zeta}
\newcommand{\RP}{\phi}
\newcommand{\Sn}{\mathfrak{S}_n}
\newcommand{\Pet}[1]{Pet_{#1}}
\newcommand{\Perm}[1]{Perm_{#1}}

\begin{document}

\text{}\vspace{-5pt}

\title[The cohomology rings of Peterson varieties]{The integral cohomology rings of \\Peterson varieties in type A}
\author {Hiraku Abe}
\address{Faculty of Science, Department of Applied Mathematics, Okayama University of
Science, 1-1 Ridai-cho, Kita-ku, Okayama, 700-0005, Japan}
\email{hirakuabe@globe.ocn.ne.jp}

\author {Haozhi Zeng}
\address{School of Mathematics and Statistics, Huazhong University of Science and Technology, Wuhan, 430074, P.R. China}
\email{zenghaozhi@icloud.com} 

\begin{abstract}
In this paper, we study the ring structure of the integral cohomology of the Peterson variety of type $\text{A}_{n-1}$.
We give two kinds of descriptions:
(1) we show that it is isomorphic to the $\Sn$-invariant subring of the integral cohomology ring of the permutohedral variety, 
(2) we determine the ring structure in terms of ring generators and their relations.
\end{abstract}

\maketitle

\section{Introduction}\label{sec: intro}

Let $n(\ge2)$ be a positive integer and $Fl_n=Fl(\C^n)$ the flag variety of $\C^n$ which is the collection of nested sequence of linear subspaces of $\C^n$:
\begin{align*}
 Fl_n = \{ V_{\bullet}=(V_1\subset V_2 \subset \cdots \subset V_n=\C^n) \mid \dim_{\C}V_i = i \ (1\le i\le n)\}.
\end{align*}
Let $N$ be an $n\times n$ regular nilpotent matrix viewed as a linear map $N\colon \C^n\rightarrow \C^n$. The Peterson variety (of type A$_{n-1}$) is a subvariety of $Fl_n$ defined by
\begin{align*}
 \Pet{n} \coloneqq \{ V_{\bullet}\in Fl_n \mid NV_j \subseteq V_{j+1} \text{ for all $1\le j<n$}\},
\end{align*}
where $NV_j$ denotes the image of $V_j$ under the map $N\colon \C^n\rightarrow \C^n$. It was introduced by Dale Peterson to study the quantum cohomology ring of $Fl_n$, and it has appeared in several contexts (e.g., \cite{AHMMS,Ba,ha-ho-ma,Kostant,Rietsch06}).

The cohomology ring $H^*(\Pet{n};\C)$ has been studied in Harada-Tymoczo (\cite{HaTy}), Fukukawa-Harada-Masuda (\cite{fu-ha-ma}), and Harada-Horiguchi-Masuda (\cite{ha-ho-ma}). Moreover, a natural basis of $H^*(\Pet{n};\C)$ which has certain positivity and integrality was discovered (\cite{AHKZ,GoGo,GoMiSi,HaTy}), and it is now actively studied in connection with mixed Eulerian numbers in combinatorics (\cite{AHKZ, GoGo, GoMiSi, GoSi, Ho}). In this paper, we study the ring structure of the cohomology of $\Pet{n}$ with $\Z$ coefficients.

To state the first theorem of this paper, we need to introduce a toric variety which is called the permutohedral variety.
Let $S$ be an $n\times n$ regular semisimple matrix 
viewed as a linear map $S\colon \C^n\rightarrow \C^n$ as above.
The permutohedral variety is defined by
\begin{align*}
 \Perm{n} \coloneqq \{ V_{\bullet}\in Fl_n \mid SV_j \subseteq V_{j+1} \text{ for all $1\le j<n$}\}.
\end{align*}
It is known that $\Perm{n}$ is the non-singular projective toric variety associated with the fan consisting of the set of Weyl chambers of type $\text{A}_{n-1}$ (\cite[Theorems 6 and 11]{De Mari-Procesi-Shayman}).
The symmetric group $\Sn$ of $n$-letters permutes the set of Weyl chambers, and hence there is a natural $\Sn$-action on the cohomology ring $H^*(\Perm{n};\Z)$ which preserves the grading and the cup product. This implies that the invariant subgroup $H^*(\Perm{n};\Z)^{\Sn}$ is in fact a graded ring with respect to the cup product.

The first theorem of this paper is the following.

\begin{theorem}\label{thm: A}
As graded rings, we have $H^*(\Pet{n};\Z) \cong  H^*(\Perm{n};\Z)^{\Sn}$.
\end{theorem}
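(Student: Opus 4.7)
The plan is to construct a ring homomorphism $\phi: H^*(\Perm{n};\Z)^{\Sn} \to H^*(\Pet{n};\Z)$ by factoring common restrictions from the flag variety $Fl_n$, and then verify $\phi$ is bijective using torsion-freeness and a rank comparison.

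Both $\Pet{n}$ and $\Perm{n}$ are Hessenberg varieties in $Fl_n$ for the Hessenberg function $h(j) = j+1$, giving restriction ring homomorphisms $r_\Pet : H^*(Fl_n;\Z) \to H^*(\Pet{n};\Z)$ and $r_\Perm : H^*(Fl_n;\Z) \to H^*(\Perm{n};\Z)$. I would first show that the image of $r_\Perm$ lies in $H^*(\Perm{n};\Z)^{\Sn}$, which should follow from GKM theory: under the equivariant embedding into polynomial tuples indexed by torus fixed points, pullbacks from $Fl_n$ correspond to tuples that are constant along $\Sn$-orbits of fixed points. I would then establish that $\ker(r_\Perm) \subseteq \ker(r_\Pet)$, which together with the known surjectivity of $r_\Pet$ (available from the affine-paving literature on $\Pet{n}$) produces a well-defined surjective ring map $\phi$ onto $H^*(\Pet{n};\Z)$.

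To show $\phi$ is injective I would argue by rank comparison. Both cohomology rings are torsion-free, since $\Perm{n}$ is a smooth projective toric variety and $\Pet{n}$ has a known affine paving, so it suffices to check that the $\Z$-ranks of $H^{2k}(\Perm{n};\Z)^{\Sn}$ and $H^{2k}(\Pet{n};\Z)$ agree for each $k$. A character-theoretic calculation for the $\Sn$-action on $H^*(\Perm{n};\C)$ (whose ungraded rank is $n!$ with basis indexed by Weyl chambers) should yield $\binom{n-1}{k}$ for the invariant subspace in degree $2k$, matching the Betti numbers of $\Pet{n}$. Combined with surjectivity of $\phi$, this forces $\phi$ to be an isomorphism.

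The main obstacle is the kernel comparison $\ker(r_\Perm) \subseteq \ker(r_\Pet)$ with $\Z$-coefficients. Over $\Q$ this type of statement typically reduces to dimension counting, but integrally one must rule out torsion phenomena. I would address this by either exhibiting matching integral bases on both sides (pulled back from explicit generators of $H^*(Fl_n;\Z)$) or by upgrading to the torus-equivariant setting via GKM theory, proving an integral isomorphism of equivariant rings, and then specializing by setting the equivariant parameters to zero. Ensuring that no prime $p$ introduces spurious torsion in either step is precisely where the integrality statement of Theorem~\ref{thm: A} demands genuine work beyond the rational case.
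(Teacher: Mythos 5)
Your plan runs structurally parallel to the paper's proof: both pass through the restriction maps $i^*\colon H^*(Fl_n;\Z)\to H^*(\Pet{n};\Z)$ and $j^*\colon H^*(Fl_n;\Z)\to H^*(\Perm{n};\Z)$, both invoke Insko's surjectivity of $i^*$, both use the fact that $\text{Im}\,j^*$ lands in the $\Sn$-invariants (via Tymoczko's dot action, which is the GKM mechanism you mention), and both close with a rank comparison using Stembridge's $\dim_{\C} H^*(\Perm{n};\C)^{\Sn}=2^{n-1}$. But the step you flag as the ``main obstacle'' — the integral comparison $\ker j^*\subseteq\ker i^*$ — is exactly where the proof lives, and you leave it unresolved, offering only two candidate directions (``matching integral bases'' or ``upgrade to equivariant GKM'') without executing either.

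The missing idea is geometric and specific: the paper works on the homology side, using a one-parameter degeneration of a regular semisimple matrix to a regular nilpotent one (the family $S_t$ of \cite{ab-de-ga-ha}), which yields $[\Pet{n}]=[\Perm{n}]$ in $H_*(Fl_n;\Z)$. This extends componentwise to $[\Pet{J}]=[\Perm{J}]$ for every $J\subseteq[n-1]$ via the product decompositions $\Pet{J}\cong\prod_k \Pet{n_k}$ and $\Perm{J}\cong\prod_k\Perm{n_k}$. Combined with the fact (from \cite{AHKZ}) that the cycle classes $\{[\Pet{J}]\}_{J\subseteq[n-1]}$ form a $\Z$-basis of $H_*(\Pet{n};\Z)$, this gives $\text{Im}\,i_*\subseteq\text{Im}\,j_*$ integrally, which (by torsion-freeness and Insko's split injectivity of $i_*$) is the dual of the kernel inclusion you want. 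Your ``matching integral bases'' suggestion is in the right spirit, but without the degeneration you have no mechanism to match the bases across the two spaces, and without the cycle basis $\{[\Pet{J}]\}$ you have no integral basis to hand; neither is standard. Also note a domain issue in your setup: defining $\phi$ by factoring $r_\Pet$ through $r_\Perm$ only gives a map on $\text{Im}(r_\Perm)$, and the equality $\text{Im}(r_\Perm)=H^*(\Perm{n};\Z)^{\Sn}$ is not a priori known — the paper obtains it only as a corollary of the argument (their Remark~\ref{rem: on rank inequality}). The paper avoids the issue by constructing a map in the opposite direction (from $H^*(\Pet{n};\Z)$) via a dualized splitting $\psi$ and then showing it is onto.

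As a smaller point, your proposed graded rank calculation $\rank H^{2k}(\Perm{n};\Z)^{\Sn}=\binom{n-1}{k}$ is true (this is a known formula for the $\Sn$-invariants of the permutohedral toric variety), but it is unnecessary: once the map is a degree-preserving surjection whose source and target are free of the same total rank $2^{n-1}$, it is automatically an isomorphism, which is all the paper uses.
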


We note that the corresponding claim for cohomology rings with $\C$ coefficients is known as mentioned in \cite[Sect.\ 1]{AHHM} based on the explicit presentations for the rings $H^*(\Pet{n};\C)$ and $H^*(\Perm{n};\C)^{\Sn}$ due to Fukukawa-Harada-Masuda (\cite{fu-ha-ma}) and Klyachko (\cite{Klyachko}), respectively (see also \cite{AHMMS,ha-ho-ma}).

As the second theorem, we give an explicit presentation of the ring structure of $H^*(\Pet{n};\Z)$ in terms of ring generators and their relations. For simplicity, we assume that the regular nilpotent matrix $N$ appearing in the definition of $\Pet{n}$ is in Jordan canonical form.
Let $L_i$ be the $i$-th tautological line bundle over $Fl_n$ $(1\le i\le n)$. By abusing notation, we denote the restriction of $L_i$ over $\Pet{n}$ by the same symbol. Let $\Z[y_1,y_2,\ldots,y_n]$ be the polynomial ring over $\Z$ with indeterminates $y_1,y_2,\ldots,y_n$. We regard this polynomial ring as a graded ring with $\deg y_i=2$ for $1\le i\le n$. Let 
\begin{align*}
 \RP \colon \Z[y_1,y_2,\ldots,y_n] \rightarrow  H^*(\Pet{n};\Z)
\end{align*} 
be the ring homomorphism
which sends $y_i$ to the first Chern class $c_1(L_i^*)$, where $L_i^*$ is the dual line bundle of $L_i$ ($1\le i\le n$).
We introduce the following homogeneous ideals of $\Z[y_1,y_2,\ldots,y_n]$: 
\begin{align*}
 &I \coloneqq (e_k(y_1,y_2,\ldots,y_n) \mid 1\le k\le n ), \\
 &I' \coloneqq ((y_i-y_{i+1})e_k(y_1,\ldots,y_i) \mid 1\le i\le n-1,\ 1\le k\le \min\{i,n-i\}), 
\end{align*}
where $e_k$ denotes the $k$-th elementary symmetric polynomial. We now state the second theorem of this paper.

\begin{theorem}\label{thm: B}
The map $\RP$ induces an isomorphism
\begin{align*}
H^*(\Pet{n};\Z)\cong  \Z[y_1,y_2,\ldots,y_n]/(I+I')
\end{align*}
as graded rings.
\end{theorem}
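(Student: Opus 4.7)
The strategy is to factor the homomorphism $\RP$ through the quotient $\Z[y_1,\ldots,y_n]/(I+I')$ and show that the induced map $\bar\RP$ is an isomorphism. The argument splits naturally into three steps: the stated relations hold in $H^*(\Pet{n};\Z)$, the induced map is surjective, and the ranks agree degree by degree.

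For the first step, the generators of $I$ vanish in $H^*(Fl_n;\Z)$ because $\bigoplus_{i=1}^{n} L_i^* \cong (\C^n)^*$ is trivial as a bundle, so $\prod_i (1+y_i) = 1$ in $H^*(Fl_n;\Z)$, and restriction to $\Pet{n}$ yields $\RP(I)=0$. The generators of $I'$ are Peterson-specific; their analogues over $\Q$ are known from \cite{fu-ha-ma, ha-ho-ma}, and one lifts them to $\Z$-coefficients by invoking the torsion-freeness of $H^*(\Pet{n};\Z)$ coming from an affine paving of $\Pet{n}$, which makes the map $H^*(\Pet{n};\Z)\hookrightarrow H^*(\Pet{n};\Q)$ injective. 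Thus $\bar\RP$ is well defined.

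Surjectivity of $\bar\RP$ reduces to the surjectivity of the restriction $H^*(Fl_n;\Z)\to H^*(\Pet{n};\Z)$: since the $c_1(L_i^*)$ generate $H^*(Fl_n;\Z)$, it suffices that they hit every class in $H^*(\Pet{n};\Z)$. This can be obtained either from an explicit monomial basis for $H^*(\Pet{n};\Z)$ in terms of the $c_1(L_i^*)$ as in \cite{AHKZ, HaTy}, or, using Theorem \ref{thm: A}, from the fact that $H^*(\Perm{n};\Z)$ is generated as a $\Z$-algebra by torus-invariant divisor classes and that $H^*(\Perm{n};\Z)^{\Sn}$ sits inside.

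The main obstacle is the final rank comparison. The goal is to show that $\Z[y_1,\ldots,y_n]/(I+I')$ is a free $\Z$-module of rank $\binom{n-1}{k}$ in degree $2k$, matching the known Betti numbers $b_{2k}(\Pet{n}) = \binom{n-1}{k}$; once this is in place, a surjection between free $\Z$-modules of the same finite rank in each degree is forced to be an isomorphism. My plan is to exhibit an integral monomial $\Z$-basis, with natural candidates being the square-free monomials in $y_1,\ldots,y_{n-1}$, via a Gröbner-style reduction against the generators of $I'$. The delicate point is keeping the reduction integral: one must check that the leading coefficients appearing in the reduction are all $\pm 1$, so that no torsion is introduced and the rank in each degree is exactly $\binom{n-1}{k}$. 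An alternative is to combine Theorem \ref{thm: A} with the Jurkiewicz--Danilov presentation of $H^*(\Perm{n};\Z)$ and compute the $\Sn$-invariant subring directly, matching it against the explicit quotient $\Z[y_1,\ldots,y_n]/(I+I')$.
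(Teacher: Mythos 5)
Your outline is structurally correct and matches the paper's strategy in its first two steps: well-definedness of $\overline{\RP}$ (the generators of $I$ vanish because they already vanish in $H^*(Fl_n;\Z)$, and the generators of $I'$ vanish by passing through torsion-freeness) and surjectivity (via Insko's surjectivity of the restriction $H^*(Fl_n;\Z)\to H^*(\Pet{n};\Z)$, Corollary~\ref{corollary: known things 3}). But the third step, which is where all the difficulty lies, is only gestured at and not actually carried out — you say ``my plan is to exhibit'' and ``the delicate point is keeping the reduction integral,'' which is precisely the content that needs to be proved.

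Concretely, your proposed candidate basis (square-free monomials in $y_1,\ldots,y_{n-1}$) together with a Gr\"obner reduction is not obviously compatible with the generating set of $I'$. For instance, in lex order $y_1 > \cdots > y_n$ the leading monomial of $(y_i - y_{i+1})e_k(y_1,\ldots,y_i)$ with $k<i$ is $y_1\cdots y_k\, y_i$, which is itself square-free, so the square-free monomials cannot all be standard monomials for that order; other natural orders run into similar issues. One would have to find a term order and a Gr\"obner basis whose initial ideal kills exactly the non-square-free monomials, verify that the leading coefficients are units in $\Z$, and none of this is checked. The paper sidesteps Gr\"obner theory entirely: it defines $\pi_J \coloneqq \prod_{k} e_{|J_k|}(y_1,\ldots,y_{\max J_k})$ for $J\subseteq[n-1]$ (a set of products of truncated elementary symmetric polynomials, \emph{not} monomials), proves multiplication formulas in $M$ (Lemmas~\ref{lem: presentation 60}, \ref{lemma: other alpha i} and Propositions~\ref{prop: joint lemma 2 in y}, \ref{prop: joint lemma in y}), and then shows by induction (Proposition~\ref{prop: module generator in y}) that the $\pi_J$ generate $M$ as a $\Z$-module. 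Since $\overline{\RP}(\pi_J)=\varpi_J$ and the $\varpi_J$ are a $\Z$-basis of $H^*(\Pet{n};\Z)$ by \cite[Theorem~4.13]{AHKZ}, the surjection $\overline{\RP}$ is forced to be injective. That chain of explicit computations is the real substance of the proof, and it is missing from your proposal. Your alternative route via Theorem~\ref{thm: A} plus Jurkiewicz--Danilov is likewise only named, not executed, and would require computing the $\Sn$-invariant subring over $\Z$ and matching it against $\Z[y]/(I+I')$, which is its own nontrivial project.
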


Explicit presentations of the cohomology ring $H^*(\Pet{n};\C)$ were given in \cite{fu-ha-ma, ha-ho-ma, HaTy} as mentioned above, and the definition of the ideal $I'$ is motivated algebraically by \cite{fu-ha-ma, ha-ho-ma} and geometrically by \cite{AHKZ}. See Section~\ref{sec: ring presentation} for details.

\bigskip
\noindent \textbf{Acknowledgments}.
We are grateful to Mikiya Masuda, Hideya Kuwata, and Tatsuya Horiguchi for valuable  discussions. This research is supported in part by Osaka City University Advanced Mathematical Institute (MEXT Joint Usage/Research Center on Mathematics and Theoretical Physics): Topology and combinatorics of Hessenberg varieties. The first author is supported in part by JSPS Grant-in-Aid for Early-Career Scientists: 18K13413. The second author is supported in part by NSFC: 11901218.

\bigskip
\vspace{10pt}

\section{Background and preliminaries}
\subsection{Hessenberg varieties}
Let $n(\ge2)$ be a positive integer. We use the notation $[n]\coloneqq\{1,2,\ldots,n\}$ throughout this paper. 
A function $h\colon [n]\rightarrow [n]$ is a \textit{Hessenberg function} if it satisfies the following two conditions:
\begin{itemize}
 \item[(i)] $h(1)\le h(2)\le \cdots \le h(n)$, 
 \item[(ii)] $h(j)\ge j$ for all $j\in[n]$.
\end{itemize}
We identify a Hessenberg function $h$ with a configuration of shaded boxes on a square grid of size $n\times n$ which consists of boxes in the $i$-th row and the $j$-th column satisfying $i\le h(j)$ for $i,j\in [n]$.

\begin{example}\label{ex: hess function}
{\rm 
If $n=5$ and $h\colon [5]\rightarrow[5]$ is given by
\begin{align*}
 (h(1),h(2),h(3),h(4),h(5)) = (3,3,4,5,5),
\end{align*}
then $h$ is a Hessenberg function corresponding to the configuration of the shaded boxes drawn in Figure~\ref{pic: example of hess function}.}
\end{example}

\begin{figure}[htbp]
{\unitlength 0.1in%
\begin{picture}(10.0000,10.0000)(22.0000,-18.0000)%
%
\special{pn 0}%
\special{sh 0.150}%
\special{pa 2200 800}%
\special{pa 2400 800}%
\special{pa 2400 1000}%
\special{pa 2200 1000}%
\special{pa 2200 800}%
\special{ip}%
\special{pn 8}%
\special{pa 2200 800}%
\special{pa 2400 800}%
\special{pa 2400 1000}%
\special{pa 2200 1000}%
\special{pa 2200 800}%
\special{pa 2400 800}%
\special{fp}%
%
\special{pn 0}%
\special{sh 0.150}%
\special{pa 2400 800}%
\special{pa 2600 800}%
\special{pa 2600 1000}%
\special{pa 2400 1000}%
\special{pa 2400 800}%
\special{ip}%
\special{pn 8}%
\special{pa 2400 800}%
\special{pa 2600 800}%
\special{pa 2600 1000}%
\special{pa 2400 1000}%
\special{pa 2400 800}%
\special{pa 2600 800}%
\special{fp}%
%
\special{pn 0}%
\special{sh 0.150}%
\special{pa 2600 800}%
\special{pa 2800 800}%
\special{pa 2800 1000}%
\special{pa 2600 1000}%
\special{pa 2600 800}%
\special{ip}%
\special{pn 8}%
\special{pa 2600 800}%
\special{pa 2800 800}%
\special{pa 2800 1000}%
\special{pa 2600 1000}%
\special{pa 2600 800}%
\special{pa 2800 800}%
\special{fp}%
%
\special{pn 0}%
\special{sh 0.150}%
\special{pa 2800 800}%
\special{pa 3000 800}%
\special{pa 3000 1000}%
\special{pa 2800 1000}%
\special{pa 2800 800}%
\special{ip}%
\special{pn 8}%
\special{pa 2800 800}%
\special{pa 3000 800}%
\special{pa 3000 1000}%
\special{pa 2800 1000}%
\special{pa 2800 800}%
\special{pa 3000 800}%
\special{fp}%
%
\special{pn 0}%
\special{sh 0.150}%
\special{pa 3000 800}%
\special{pa 3200 800}%
\special{pa 3200 1000}%
\special{pa 3000 1000}%
\special{pa 3000 800}%
\special{ip}%
\special{pn 8}%
\special{pa 3000 800}%
\special{pa 3200 800}%
\special{pa 3200 1000}%
\special{pa 3000 1000}%
\special{pa 3000 800}%
\special{pa 3200 800}%
\special{fp}%
%
\special{pn 0}%
\special{sh 0.150}%
\special{pa 2200 1000}%
\special{pa 2400 1000}%
\special{pa 2400 1200}%
\special{pa 2200 1200}%
\special{pa 2200 1000}%
\special{ip}%
\special{pn 8}%
\special{pa 2200 1000}%
\special{pa 2400 1000}%
\special{pa 2400 1200}%
\special{pa 2200 1200}%
\special{pa 2200 1000}%
\special{pa 2400 1000}%
\special{fp}%
%
\special{pn 0}%
\special{sh 0.150}%
\special{pa 2400 1000}%
\special{pa 2600 1000}%
\special{pa 2600 1200}%
\special{pa 2400 1200}%
\special{pa 2400 1000}%
\special{ip}%
\special{pn 8}%
\special{pa 2400 1000}%
\special{pa 2600 1000}%
\special{pa 2600 1200}%
\special{pa 2400 1200}%
\special{pa 2400 1000}%
\special{pa 2600 1000}%
\special{fp}%
%
\special{pn 0}%
\special{sh 0.150}%
\special{pa 2600 1000}%
\special{pa 2800 1000}%
\special{pa 2800 1200}%
\special{pa 2600 1200}%
\special{pa 2600 1000}%
\special{ip}%
\special{pn 8}%
\special{pa 2600 1000}%
\special{pa 2800 1000}%
\special{pa 2800 1200}%
\special{pa 2600 1200}%
\special{pa 2600 1000}%
\special{pa 2800 1000}%
\special{fp}%
%
\special{pn 0}%
\special{sh 0.150}%
\special{pa 2800 1000}%
\special{pa 3000 1000}%
\special{pa 3000 1200}%
\special{pa 2800 1200}%
\special{pa 2800 1000}%
\special{ip}%
\special{pn 8}%
\special{pa 2800 1000}%
\special{pa 3000 1000}%
\special{pa 3000 1200}%
\special{pa 2800 1200}%
\special{pa 2800 1000}%
\special{pa 3000 1000}%
\special{fp}%
%
\special{pn 0}%
\special{sh 0.150}%
\special{pa 3000 1000}%
\special{pa 3200 1000}%
\special{pa 3200 1200}%
\special{pa 3000 1200}%
\special{pa 3000 1000}%
\special{ip}%
\special{pn 8}%
\special{pa 3000 1000}%
\special{pa 3200 1000}%
\special{pa 3200 1200}%
\special{pa 3000 1200}%
\special{pa 3000 1000}%
\special{pa 3200 1000}%
\special{fp}%
%
\special{pn 0}%
\special{sh 0.150}%
\special{pa 2200 1200}%
\special{pa 2400 1200}%
\special{pa 2400 1400}%
\special{pa 2200 1400}%
\special{pa 2200 1200}%
\special{ip}%
\special{pn 8}%
\special{pa 2200 1200}%
\special{pa 2400 1200}%
\special{pa 2400 1400}%
\special{pa 2200 1400}%
\special{pa 2200 1200}%
\special{pa 2400 1200}%
\special{fp}%
%
\special{pn 0}%
\special{sh 0.150}%
\special{pa 2400 1200}%
\special{pa 2600 1200}%
\special{pa 2600 1400}%
\special{pa 2400 1400}%
\special{pa 2400 1200}%
\special{ip}%
\special{pn 8}%
\special{pa 2400 1200}%
\special{pa 2600 1200}%
\special{pa 2600 1400}%
\special{pa 2400 1400}%
\special{pa 2400 1200}%
\special{pa 2600 1200}%
\special{fp}%
%
\special{pn 0}%
\special{sh 0.150}%
\special{pa 2600 1200}%
\special{pa 2800 1200}%
\special{pa 2800 1400}%
\special{pa 2600 1400}%
\special{pa 2600 1200}%
\special{ip}%
\special{pn 8}%
\special{pa 2600 1200}%
\special{pa 2800 1200}%
\special{pa 2800 1400}%
\special{pa 2600 1400}%
\special{pa 2600 1200}%
\special{pa 2800 1200}%
\special{fp}%
%
\special{pn 0}%
\special{sh 0.150}%
\special{pa 2800 1200}%
\special{pa 3000 1200}%
\special{pa 3000 1400}%
\special{pa 2800 1400}%
\special{pa 2800 1200}%
\special{ip}%
\special{pn 8}%
\special{pa 2800 1200}%
\special{pa 3000 1200}%
\special{pa 3000 1400}%
\special{pa 2800 1400}%
\special{pa 2800 1200}%
\special{pa 3000 1200}%
\special{fp}%
%
\special{pn 0}%
\special{sh 0.150}%
\special{pa 3000 1200}%
\special{pa 3200 1200}%
\special{pa 3200 1400}%
\special{pa 3000 1400}%
\special{pa 3000 1200}%
\special{ip}%
\special{pn 8}%
\special{pa 3000 1200}%
\special{pa 3200 1200}%
\special{pa 3200 1400}%
\special{pa 3000 1400}%
\special{pa 3000 1200}%
\special{pa 3200 1200}%
\special{fp}%
%
\special{pn 8}%
\special{pa 2200 1400}%
\special{pa 2400 1400}%
\special{pa 2400 1600}%
\special{pa 2200 1600}%
\special{pa 2200 1400}%
\special{pa 2400 1400}%
\special{fp}%
%
\special{pn 8}%
\special{pa 2400 1400}%
\special{pa 2600 1400}%
\special{pa 2600 1600}%
\special{pa 2400 1600}%
\special{pa 2400 1400}%
\special{pa 2600 1400}%
\special{fp}%
%
\special{pn 0}%
\special{sh 0.150}%
\special{pa 2600 1400}%
\special{pa 2800 1400}%
\special{pa 2800 1600}%
\special{pa 2600 1600}%
\special{pa 2600 1400}%
\special{ip}%
\special{pn 8}%
\special{pa 2600 1400}%
\special{pa 2800 1400}%
\special{pa 2800 1600}%
\special{pa 2600 1600}%
\special{pa 2600 1400}%
\special{pa 2800 1400}%
\special{fp}%
%
\special{pn 0}%
\special{sh 0.150}%
\special{pa 2800 1400}%
\special{pa 3000 1400}%
\special{pa 3000 1600}%
\special{pa 2800 1600}%
\special{pa 2800 1400}%
\special{ip}%
\special{pn 8}%
\special{pa 2800 1400}%
\special{pa 3000 1400}%
\special{pa 3000 1600}%
\special{pa 2800 1600}%
\special{pa 2800 1400}%
\special{pa 3000 1400}%
\special{fp}%
%
\special{pn 0}%
\special{sh 0.150}%
\special{pa 3000 1400}%
\special{pa 3200 1400}%
\special{pa 3200 1600}%
\special{pa 3000 1600}%
\special{pa 3000 1400}%
\special{ip}%
\special{pn 8}%
\special{pa 3000 1400}%
\special{pa 3200 1400}%
\special{pa 3200 1600}%
\special{pa 3000 1600}%
\special{pa 3000 1400}%
\special{pa 3200 1400}%
\special{fp}%
%
\special{pn 8}%
\special{pa 2200 1600}%
\special{pa 2400 1600}%
\special{pa 2400 1800}%
\special{pa 2200 1800}%
\special{pa 2200 1600}%
\special{pa 2400 1600}%
\special{fp}%
%
\special{pn 8}%
\special{pa 2400 1600}%
\special{pa 2600 1600}%
\special{pa 2600 1800}%
\special{pa 2400 1800}%
\special{pa 2400 1600}%
\special{pa 2600 1600}%
\special{fp}%
%
\special{pn 8}%
\special{pa 2600 1600}%
\special{pa 2800 1600}%
\special{pa 2800 1800}%
\special{pa 2600 1800}%
\special{pa 2600 1600}%
\special{pa 2800 1600}%
\special{fp}%
%
\special{pn 0}%
\special{sh 0.150}%
\special{pa 2800 1600}%
\special{pa 3000 1600}%
\special{pa 3000 1800}%
\special{pa 2800 1800}%
\special{pa 2800 1600}%
\special{ip}%
\special{pn 8}%
\special{pa 2800 1600}%
\special{pa 3000 1600}%
\special{pa 3000 1800}%
\special{pa 2800 1800}%
\special{pa 2800 1600}%
\special{pa 3000 1600}%
\special{fp}%
%
\special{pn 0}%
\special{sh 0.150}%
\special{pa 3000 1600}%
\special{pa 3200 1600}%
\special{pa 3200 1800}%
\special{pa 3000 1800}%
\special{pa 3000 1600}%
\special{ip}%
\special{pn 8}%
\special{pa 3000 1600}%
\special{pa 3200 1600}%
\special{pa 3200 1800}%
\special{pa 3000 1800}%
\special{pa 3000 1600}%
\special{pa 3200 1600}%
\special{fp}%
\end{picture}}%
\caption{The configuration of shaded boxes for Example \ref{ex: hess function}}
\label{pic: example of hess function}
\end{figure}

Let $Fl_n=Fl(\C^n)$ be the flag variety of $\C^n$. 
For an $n\times n$ matrix $X$ (viewed as a linear map $X\colon \C^n\rightarrow \C^n$) and a Hessenberg function $h\colon [n]\rightarrow [n]$, the \textit{Hessenberg variety} associated with $X$ and $h$ is defined by 
\begin{align*}
 \Hess(X,h) \coloneqq \{ V_{\bullet}\in Fl_n \mid XV_j \subseteq V_{h(j)} \text{ for all $1\le j\le n$}\}.
\end{align*}
The Peterson variety and the permutohedral variety are both special cases of Hessenberg varieties as we will see in the next subsection.

We denote by $\text{GL}_n(\C)$ the complex general linear group of degree $n$. There is a natural action of $\text{GL}_n(\C)$ on $Fl_n$, and we have $\Hess(gXg^{-1},h) = g \cdot \Hess(X,h)$ in $Fl_n$.
This implies that 
\begin{align}\label{eq: similarity}
 \Hess(gXg^{-1},h) \cong \Hess(X,h)
\end{align}
for $g\in \text{GL}_n(\C)$ 
so that taking conjugation of the matrix $X$ does not change the isomorphism class of $\Hess(X,h)$.

\subsection{Peterson varieties}\label{subsect: Peterson varieties}
Let $N$ be an $n\times n$ regular nilpotent matrix (i.e., a nilpotent matrix consisting of a single Jordan block), and let $h_2\colon [n]\rightarrow[n]$ be the Hessenberg function given by
\begin{align}\label{eq: toric Hessenberg function}
 h_2(j)=j+1 \quad \text{for $1\le j<n$}.
\end{align}
The \textit{Peterson variety} $\Pet{n}$ is defined as a special case of Hessenberg varieties: 
\begin{align}\label{eq: def of Pet}
 \Pet{n} \coloneqq \Hess(N,h_2) = \{ V_{\bullet}\in Fl_n \mid NV_j \subseteq V_{j+1} \text{ for all $1\le j<n$}\}.
\end{align}
For simplicity, we assume that  $N$ is in Jordan canonical form in the rest of this paper.
It is well-known (cf.\ \cite{Peterson} or \cite[Lemma 7.1]{an-ty}) that 
\begin{align}\label{eq: dim of Pet}
 \dim_{\C}\Pet{n} = n-1.
\end{align}
For a topological space $X$, we denote by $H_*(X;\Z)$ and $H^*(X;\Z)$ the singular homology group of $X$ and the singular cohomology ring of $X$, respectively.
We set $H_{\text{odd}}(X;\Z)\coloneqq \oplus_{k\ge0} H_{2k+1}(X;\Z)$.

\begin{proposition}\label{prop: known things 0 Pet}
$($\cite{Peterson} and \cite[Theorem 7.1]{ty}$)$
\begin{itemize}
 \item[(i)] $H_*(\Pet{n};\Z)$ is a torsion free $\Z$-module of rank $2^{n-1}$.
 \item[(ii)] $H_{\text{\rm odd}}(\Pet{n};\Z)=0$.
\end{itemize}
\end{proposition}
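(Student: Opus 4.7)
My plan is to deduce both statements at once by producing an affine paving of $\Pet{n}$ with exactly $2^{n-1}$ complex affine cells. For any complex algebraic variety admitting such a paving, the filtration by skeleta together with the long exact sequence of the pair shows that the integral homology is torsion free, concentrated in even degrees, and of total rank equal to the number of cells; both (i) and (ii) are then immediate.

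To construct the paving I would intersect $\Pet{n}$ with the Bruhat cells of $Fl_n$, writing $Fl_n=\bigsqcup_{w\in\Sn}BwB/B$ for the standard Borel $B$. The first step is to identify those $w$ giving a non-empty intersection with $\Pet{n}$; these should be parametrized by subsets $A\subseteq[n-1]$, where $A$ records which consecutive pairs $(i,i+1)$ of indices are ``joined'' by the flag. Concretely, one decomposes $[n-1]\setminus A$ into maximal consecutive intervals and lets $w_A$ be the product of the longest elements of the associated parabolic subgroups, yielding exactly $2^{n-1}$ candidate permutations. The second step is to verify that each non-empty intersection $\Pet{n}\cap Bw_AB/B$ is an affine space: writing a representative flag as $u\dot w_A B/B$ for $u$ in an appropriate opposite unipotent subgroup and expanding the condition $NV_j\subseteq V_{j+1}$, the defining equations become linear in the matrix entries of $u$ after suitable elimination, cutting out an affine subspace whose dimension can be computed explicitly from the inversions of $w_A$ compatible with the Hessenberg constraint.

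The main obstacle, and the technical core of Tymoczko's argument in \cite{ty}, is precisely this affineness verification: one must check case-by-case which coordinates of $u$ are free and which are forced by the Hessenberg condition, and show that the resulting scheme is isomorphic to affine space rather than merely having the correct dimension. An induction on $n$ that peels off the top row/column of $N$ and exploits the block structure induced by $A$ is the natural organizing principle. Once the paving is in place, summing over all $A\subseteq[n-1]$ gives the cell count $2^{n-1}$, and the cellular argument of the first paragraph immediately yields both parts.
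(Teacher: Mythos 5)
The paper supplies no proof of its own here: it cites Peterson and Tymoczko's \cite[Theorem 7.1]{ty}, and your proposal reconstructs exactly that argument — intersect $\Pet{n}$ with the Schubert cells $BwB/B$, show that the non-empty intersections (parametrized by subsets of $[n-1]$ via the longest elements $w_J$ of Young subgroups) are affine spaces, and conclude both torsion-freeness and vanishing of odd homology from the resulting affine paving with $2^{n-1}$ cells. One small bookkeeping slip: you say $A$ records the joined pairs but then build $w_A$ from the intervals of $[n-1]\setminus A$; in the paper's conventions $w_J$ is the longest element of the Young subgroup generated by $s_i$ for $i\in J$ itself, not its complement — harmless for the count but worth fixing so the indexing matches the fixed-point set of $\Pet{n}$.
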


\vspace{10pt}

\subsection{Permutohedral varieties}\label{subsect: permutohedral}
Let $S$ be an $n\times n$ regular semisimple matrix (i.e., an $n\times n$ matrix with $n$ distinct eigenvalues), and let $h_2:[n]\rightarrow[n]$ be the Hessenberg function defined in \eqref{eq: toric Hessenberg function}.
The \textit{permutohedral variety} $\Perm{n}$ is also a special case of Hessenberg varieties:
\begin{align}\label{eq: def of Perm}
 \Perm{n} \coloneqq \Hess(S,h_2) = \{ V_{\bullet}\in Fl_n \mid SV_j \subseteq V_{j+1} \text{ for all $1\le j<n$}\}.
\end{align}
It is known that $\Perm{n}$ is the non-singular projective toric variety associated with the fan consisting of the set of Weyl chambers of type A$_{n-1}$ (\cite[Theorems 6 and 11]{De Mari-Procesi-Shayman}). This implies that the isomorphism class of $\Hess(S,h_2)$ does not depend on a choice of a regular semisimple matrix $S$.
It also follows from \cite[Theorem 11]{De Mari-Procesi-Shayman} that 
\begin{align*}
 \dim_{\C}\Perm{n} = n-1.
\end{align*}

\begin{proposition}\label{prop: known things 0 Perm}
$($\cite[Section III]{De Mari-Procesi-Shayman}$)$
\begin{itemize}
 \item[(i)] $H_*(\Perm{n};\Z)$ is a torsion free $\Z$-module of rank  $n!$.
 \item[(ii)] $H_{\text{\rm odd}}(\Perm{n};\Z)=0$.
\end{itemize}
\end{proposition}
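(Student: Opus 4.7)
The plan is to exploit the toric structure of $\Perm{n}$ together with a Bia\l{}ynicki--Birula type cell decomposition to get an affine paving with only even-dimensional cells; both (i) and (ii) then fall out of standard CW-homology arguments.

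First, I would invoke the result of De Mari--Procesi--Shayman stated just before the proposition: $\Perm{n}$ is the smooth projective toric variety $X_\Sigma$ whose defining fan $\Sigma$ is the Weyl chamber fan of type $\text{A}_{n-1}$ inside $\Lie(T_{\R})$, where $T\subset \text{GL}_n(\C)$ is the diagonal maximal torus modulo its center. Next, I would pick a generic one-parameter subgroup $\lambda \colon \C^\times \to T$ whose associated character lies in the interior of one fixed Weyl chamber, so that its fixed locus on $X_\Sigma$ coincides with the full torus fixed locus $X_\Sigma^{T}$. For smooth projective toric varieties this fixed locus is well known to be in bijection with the set of maximal cones of $\Sigma$, i.e.\ with the set of Weyl chambers; hence $|X_\Sigma^{T}| = |\Sn| = n!$.

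The main step is then to apply the Bia\l{}ynicki--Birula decomposition to the $\C^\times$-action given by $\lambda$ on the smooth projective variety $\Perm{n}$: this yields a disjoint decomposition
\begin{align*}
\Perm{n} \;=\; \bigsqcup_{w\in \Perm{n}^{T}} X_w^{+}, \qquad X_w^{+}\cong \C^{d_w},
\end{align*}
where each stratum is the attracting set of the fixed point $w$ under $\lambda$, and is an affine space by smoothness. For a smooth projective toric variety this is classical (see, e.g., Fulton's account of the Morse-theoretic description of toric homology), and the cells can even be identified with the interiors of the cones via the moment map picture. In particular the $X_w^{+}$ form an algebraic cell decomposition of $\Perm{n}$ consisting of $n!$ affine cells of various (complex) dimensions.

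To finish, I would use the standard fact that a variety admitting a decomposition into even-real-dimensional affine cells has a CW structure whose attaching maps land in lower skeleta of the same even parity, so the cellular chain complex has zero differentials. This immediately gives that $H_*(\Perm{n};\Z)$ is a free $\Z$-module with one generator per cell, concentrated in even degrees; summing over the $n!$ cells yields total rank $n!$, and (ii) follows from the absence of odd-dimensional cells. I do not foresee a serious obstacle: the only point requiring care is verifying that the BB strata are genuinely affine spaces and that the paving is compatible with a CW structure, both of which are classical for smooth projective toric varieties. If one prefers to avoid Bia\l{}ynicki--Birula altogether, the same conclusion can be reached directly from the combinatorial formula $\rank H_{2k}(X_\Sigma;\Z) = \sum_{j\ge k}(-1)^{j-k}\binom{j}{k}d_{n-1-j}(\Sigma)$ applied to the Weyl chamber fan, but the cell-decomposition route is cleaner and more geometric.
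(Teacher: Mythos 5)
Your proof is correct. The paper itself offers no argument for this proposition—it simply cites De Mari--Procesi--Shayman, Section III—so there is no ``paper proof'' to compare against line by line. Your reconstruction via the toric structure and a Bia\l{}ynicki--Birula decomposition is a clean and standard way to establish both claims, and it is logically sound: identifying $\Perm{n}$ with the Weyl-chamber toric variety, noting that $T$-fixed points biject with maximal cones (hence $n!$ of them), applying BB to a generic one-parameter subgroup to obtain an affine paving by even-real-dimensional cells, and then reading off free homology concentrated in even degrees.

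It is worth noting one small difference in emphasis from the cited source. De Mari--Procesi--Shayman obtain their cell decomposition for regular semisimple Hessenberg varieties by intersecting directly with Schubert cells of $Fl_n$ (a BB/Schubert paving argument carried out inside the flag variety), and the identification with the toric variety of the Weyl-chamber fan appears separately as their Theorem~11. You instead route through the toric description first and then apply BB to the abstract smooth projective toric variety. Both are BB-type arguments and give the same $n!$ even-dimensional affine cells; your version leans on the toric classification (making the fixed-point count immediate), while theirs stays inside $Fl_n$ and works for general regular semisimple Hessenberg varieties, not just $h_2$. Either route is acceptable here, and you correctly flag the only points needing care—affineness of BB strata and compatibility with a CW structure—both of which are indeed standard for smooth projective varieties with a $\C^\times$-action.
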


The Weyl group $\Sn$ permutes the set of Weyl chambers of type A$_{n-1}$, and hence it induces an $\Sn$-action on the cohomology ring $H^*(\Perm{n};\Z)$ of the toric variety $\Perm{n}$. It is known $($e.g., \cite[Sect.\ 1]{ChoHongLee1}$)$ that this $\Sn$-module can also be constructed as a special case of the \textit{dot action} due to Tymoczko (\cite{Ty3}) which we briefly review in what follows.

Recalling that we have \eqref{eq: similarity}, 
we may assume that the matrix $S$ in the diagonal form.
Let $h\colon [n]\rightarrow [n]$ be an arbitrary Hessenberg function.
We denote by $T$ the maximal torus of $\text{GL}_n(\C)$ consisting of diagonal matrices.
There is a natural action of $\text{GL}_n(\C)$ on $Fl_n$, and hence $T$ acts on $Fl_n$ through the action of $\text{GL}_n(\C)$.
This $T$-action preserves $\Hess(S,h)\subseteq Fl_n$ since the matrix $S$ and elements of $T$ commute.
In this way, we obtain a $T$-action on $\Hess(S,h)$.
In \cite{Ty3}, Tymoczko constructed a representation of $\Sn$ on the $T$-equivariant cohomology $H_T^*(\Hess(S,h);\C)$ by using its GKM presentation, and she showed that it induces a representation of $\Sn$ on the ordinary cohomology ring $H^*(\Hess(S,h);\C)$ which preserves the degree and the cup product.
As mentioned in \cite[Remark 2.4]{ab-ho-ma}, the same construction works for the integral cohomology ring $H^*(\Hess(S,h);\Z)$ as well. 
Since we have $\Perm{n} =\Hess(S,h_2)$ by definition, we regard $H^*(\Perm{n};\Z)$ as an $\Sn$-module by this way throughout the paper.
For this $\Sn$-module, the following claim is deduced from \cite{AHHM,Klyachko,ste94}.

\begin{proposition}\label{prop: known things 2}
\text{}
\begin{itemize}
 \item[(i)] The image of the restriction map $H^*(Fl_n;\Z) \rightarrow  H^*(\Perm{n};\Z)$ lies in the invariant submodule $H^*(\Perm{n};\Z)^{\Sn}$.
 \item[(ii)] $\rank H^*(\Perm{n};\Z)^{\Sn} \le 2^{n-1}$.
\end{itemize}
\end{proposition}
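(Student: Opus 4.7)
The plan is to handle both parts via Tymoczko's GKM-theoretic description of the dot action on $H_T^*(\Perm{n};\Z)$. Assuming $S=\mathrm{diag}(s_1,\ldots,s_n)$ with $s_i$ pairwise distinct, the $T$-fixed points of both $Fl_n$ and $\Perm{n}$ are the permutation flags $\{E_w\}_{w\in\Sn}$. Localization at the fixed points gives embeddings of $H_T^*(Fl_n;\Z)$ and $H_T^*(\Perm{n};\Z)$ into $\bigoplus_{w\in\Sn}\Z[t_1,\ldots,t_n]$, and the restriction map is simply the inclusion of the former image into the latter. Tymoczko's dot action on $\bigoplus_w\Z[t_1,\ldots,t_n]$ is given by $(v\cdot\alpha)_w=v\cdot\alpha_{v^{-1}w}$, where $v\in\Sn$ simultaneously permutes the summands and the variables $t_i$; since this action preserves both images, the restriction map is $\Sn$-equivariant.

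For part (i), I would observe that the equivariant first Chern class of the dual tautological line bundle $L_i^*$ restricts, at each fixed point $E_w$, to $-t_{w(i)}\in H^*(BT;\Z)$. Then
\begin{equation*}
(v\cdot c_1^T(L_i^*))_w = v\cdot(-t_{v^{-1}w(i)}) = -t_{v(v^{-1}w(i))} = -t_{w(i)} = (c_1^T(L_i^*))_w,
\end{equation*}
so each $c_1^T(L_i^*)$ is dot-invariant. Since these classes generate the image of $H_T^*(Fl_n;\Z)\to H_T^*(\Perm{n};\Z)$ as a ring and the dot action is a ring action, the entire image is contained in $H_T^*(\Perm{n};\Z)^{\Sn}$. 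Passing to ordinary cohomology by modding out the $\Sn$-stable ideal generated by $H^{>0}(BT;\Z)$ then yields the statement.

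For part (ii), I would invoke the known $\Sn$-character computation of $H^*(\Perm{n};\C)$, due to Klyachko \cite{Klyachko} and Stembridge \cite{ste94}: the multiplicity of the trivial representation in degree $2k$ is $\binom{n-1}{k}$, and these sum to $2^{n-1}$. Since $H^*(\Perm{n};\Z)$ is torsion-free by Proposition~\ref{prop: known things 0 Perm}, one has $H^*(\Perm{n};\C)^{\Sn}\cong H^*(\Perm{n};\Z)^{\Sn}\otimes_{\Z}\C$ as $\Sn$-modules, and therefore $\rank H^*(\Perm{n};\Z)^{\Sn}\le 2^{n-1}$.

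The main obstacle is part (ii), which relies on the non-trivial character-theoretic input of Klyachko and Stembridge to pin down the dimension of the trivial isotypic component; part (i), once Tymoczko's GKM formalism is in place, reduces to the one-line calculation above.
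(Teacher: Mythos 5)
Your approach is essentially the same as the paper's: part (i) via Tymoczko's GKM description showing the classes coming from $H^*(Fl_n;\Z)$ are dot-invariant, and part (ii) by comparing $\Z$-invariants with $\C$-invariants and invoking Stembridge's computation $\dim_\C H^*(\Perm{n};\C)^{\Sn}=2^{n-1}$. The one substantive difference is in part (ii): the paper is more conservative, observing only the injection $H^*(\Perm{n};\Z)^{\Sn}\otimes_\Z\C\hookrightarrow H^*(\Perm{n};\C)^{\Sn}$ (which is all that is needed for the inequality), whereas you assert the stronger isomorphism $H^*(\Perm{n};\Z)^{\Sn}\otimes_\Z\C\cong H^*(\Perm{n};\C)^{\Sn}$. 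Your claim is in fact correct — taking $G$-invariants of a free $\Z$-module is a kernel, and tensoring with the flat ring $\C$ commutes with kernels — but you should make that argument explicit rather than citing it as known, since in general invariants need not commute with base change. A smaller point: when you say the $c_1^T(L_i^*)$ ``generate the image of $H_T^*(Fl_n;\Z)\to H_T^*(\Perm{n};\Z)$ as a ring,'' they generate it only as an $H^*(BT;\Z)$-algebra, and the constants $t_j$ are not dot-invariant; the argument is rescued because you then pass to ordinary cohomology, where the $t_j$ die, so it would be cleaner to phrase the generation statement at the level of ordinary cohomology (as the paper does, observing the restriction is $\Sn$-equivariant and $\Sn$ acts trivially on $H^*(Fl_n;\Z)$).
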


\begin{proof}
The claim (i) for $\Q$ coefficients follows from \cite{Klyachko} or \cite[Sect.\ 8]{AHHM}. Since the argument of \cite[Sect.\ 8]{AHHM} works verbatim for $\Z$ coefficients as well, we explain only the outline of the proof.
Let $h_{n}\colon [n]\rightarrow [n]$ be the Hessenberg function given by $h_{n}(j)=n$ for $1\le j\le n$. Since $Fl_n=\Hess(S,h_{n})$, the cohomology ring $H^*(Fl_n;\Z)$ also admits Tymoczko's $\Sn$-action.
By the construction of this $\Sn$-action, it follows that the restriction map $H^*(Fl_n;\Z) \rightarrow  H^*(\Perm{n};\Z)$ is a homomorphism of $\Sn$-modules (\cite[Lemma~8.1]{AHHM}). Also, it is known from \cite[Proposition~4.4]{Ty3} that the $\Sn$-action on $H^*(Fl_n;\Z)$ is trivial. This proves the claim (i).

For the claim (ii), it is clear that we have the inclusion map
\begin{align*}
 H^*(\Perm{n};\Z)^{\Sn}
 \hookrightarrow
 H^*(\Perm{n};\Z).
\end{align*}
Since both of these are free $\Z$-modules by Proposition~\ref{prop: known things 0 Perm}, this map induces an injective linear map over $\C$: 
\begin{align}\label{eq: invariant ineq}
 H^*(\Perm{n};\Z)^{\Sn}\otimes_{\Z}\C
 \hookrightarrow
 H^*(\Perm{n};\Z)\otimes_{\Z}\C. 
\end{align}
Here, the target vector space $H^*(\Perm{n};\Z)\otimes_{\Z}\C$ has a natural structure of an $\Sn$-representation induced by that of $H^*(\Perm{n};\Z)$, and it is isomorphic to $H^*(\Perm{n};\C)$ as $\Sn$-representations by construction (cf.\ \cite[Remark 2.4]{ab-ho-ma}).
The image of the map \eqref{eq: invariant ineq} lies on the $\Sn$-invariant subspace of $H^*(\Perm{n};\Z)\otimes_{\Z}\C\cong H^*(\Perm{n};\C)$ so that we obtain an injective linear map
\begin{align*}
 H^*(\Perm{n};\Z)^{\Sn}\otimes_{\Z}\C
 \hookrightarrow
 H^*(\Perm{n};\C)^{\Sn}.
\end{align*}
Thus, it follows that
\begin{align*}
 \rank H^*(\Perm{n};\Z)^{\Sn}
 \le  \dim_{\C} H^*(\Perm{n};\C)^{\Sn}
 =2^{n-1},
\end{align*}
where the last equality follows from \cite[Theorem 3.1]{ste94}.
\end{proof}

\begin{remark}\label{rem: known things 2}
\textnormal{
For the second claim of Proposition~$\ref{prop: known things 2}$, we show that the equality $\rank H^*(\Perm{n};\Z)^{\Sn} = 2^{n-1}$ holds in the next section. See Remark~\ref{rem: on rank inequality} for details.
}
\end{remark}

\vspace{10pt}

\subsection{A connection between Peterson varieties and permutohedral varieties}
Let $\lambda_1,\lambda_2,\ldots,\lambda_n\in\C$ be distinct complex numbers.
For $t\in \C$, we consider an $n\times n$ matrix $S_t$ given by
\begin{align*}
 S_t =
 \begin{pmatrix}
  t\lambda_1\hspace{-2pt} & \hspace{-3pt}1 & & & \\
  & \hspace{-5pt}t\lambda_2\hspace{-5pt} & 1 & & \\
  & & \ddots & \ddots & \\
  & & & \hspace{-5pt}t\lambda_{n-1}\hspace{-5pt} & \hspace{-5pt}1 \\
  & & & & \hspace{-2pt}t\lambda_n
 \end{pmatrix}
 \quad (t\in\C).
\end{align*}
For a (fixed) Hessenberg function $h\colon [n]\rightarrow[n]$, this leads us to consider a family of Hessenberg varieties over the $1$-dimensional base space $\C$ such that the fiber over $t\in \C$ is $\Hess(S_t,h)$ (see \cite[Section 4]{ab-de-ga-ha} for details).
For our purpose, we take $h=h_2$, where $h_2$ is the Hessenberg function given in \eqref{eq: toric Hessenberg function}.
When $t\ne0$, the matrix $S_t$ is a regular semisimple matrix, and hence we have $\Hess(S_t,h_2)\cong \Perm{n}$ by \eqref{eq: similarity}. When $t=0$, it is clear that $\Hess(S_0,h_2)= \Pet{n}$. Thus, we obtain a degeneration from $\Perm{n}$ to $\Pet{n}$.
This family was studied in \cite{ab-de-ga-ha} to prove the following.

\begin{proposition}\label{prop: known things 4}
$($\cite[Corollary 4.3]{ab-de-ga-ha}$)$
We have
\begin{align*}
 [\Pet{n}]=[\Perm{n}] \quad \text{in $H_*(Fl_{n};\Z)$},
\end{align*}
where $[\Pet{n}]$ and $[\Perm{n}]$ are the cycles representing the subvarieties
$\Pet{n}$ and $\Perm{n}$ in $Fl_{n}$, respectively.
\end{proposition}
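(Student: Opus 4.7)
The plan is to realize both $\Pet{n}$ and $\Perm{n}$ as fibers of a single family over $\C$ inside $Fl_n$, and then invoke the invariance of cycle classes in $H_*(Fl_n;\Z)$ under flat deformation.

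First, I would introduce the total space
\begin{align*}
 \mathcal{X} := \{(t, V_\bullet) \in \C \times Fl_n \mid S_t V_j \subseteq V_{j+1} \text{ for all } 1 \le j < n\}
\end{align*}
with the projection $\pi \colon \mathcal{X} \to \C$, so that $\pi^{-1}(t) = \Hess(S_t, h_2)$. At $t=0$ this is exactly $\Pet{n}$, since $S_0 = N$ is the Jordan-form regular nilpotent. For $t \ne 0$, the matrix $S_t$ has distinct eigenvalues $t\lambda_1, \ldots, t\lambda_n$ and is therefore regular semisimple, so by \eqref{eq: similarity} there exists $g_t \in \text{GL}_n(\C)$ with $\pi^{-1}(t) = g_t \cdot \Perm{n}$; since $\text{GL}_n(\C)$ is path-connected, this translate represents the same homology class as $\Perm{n}$, giving $[\pi^{-1}(t)] = [\Perm{n}]$ in $H_*(Fl_n;\Z)$ for every $t \ne 0$.

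Second, I would establish that $\pi$ is flat. Over the smooth curve $\C$, flatness is equivalent to every associated point of $\mathcal{X}$ mapping to the generic point of $\C$. The preimage $\pi^{-1}(\C^\ast)$ is an irreducible $\Perm{n}$-bundle over $\C^\ast$ of dimension $n$, so its closure in $\mathcal{X}$ is an irreducible $n$-dimensional component that dominates $\C$. The remaining task is to verify that $\mathcal{X}$ has no additional component entirely contained in the fiber $\pi^{-1}(0) = \Pet{n}$, which is irreducible of dimension $n-1$ by \eqref{eq: dim of Pet}, and no embedded structure there either.

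Third, once flatness is established, standard deformation invariance gives that $[\pi^{-1}(t)] \in H_*(Fl_n;\Z)$ is independent of $t \in \C$; specializing to $t = 0$ and to some $t \ne 0$ yields the desired equality $[\Pet{n}] = [\Perm{n}]$. The main obstacle is the flatness check in the second step: the Hessenberg rank conditions cutting out $\mathcal{X}$ in $\C \times Fl_n$ are not imposed transversely (their naive codimension exceeds the actual codimension of $\Pet{n}$ in $Fl_n$), so $\mathcal{X}$ is not obviously a local complete intersection, and some genuine care is required to rule out extraneous irreducible components or embedded structure supported over $t = 0$.
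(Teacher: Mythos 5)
Your setup --- realizing both $\Pet{n}$ and $\Perm{n}$ as fibers of the family $\pi\colon\mathcal{X}\to\C$ with $\pi^{-1}(t)=\Hess(S_t,h_2)$ --- is precisely the framework behind \cite[Corollary~4.3]{ab-de-ga-ha}, which the paper cites without supplying its own argument. So the outline is the right one. However, the flatness of $\pi$ (and the identification of the limit cycle) is exactly the content of the cited result, and you correctly flag it as an obstacle and then leave it open, so this remains a genuine gap rather than a completed proof.

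Two separate points need to be settled. First, one must show that $\mathcal{X}$ --- or at least its reduction --- coincides with the closure $\overline{\pi^{-1}(\C^*)}$, i.e.\ that no irreducible component or embedded associated point of $\mathcal{X}$ is supported over $t=0$. Your dimension count combined with the irreducibility of $\Pet{n}$ already shows $\Pet{n}\subseteq\overline{\pi^{-1}(\C^*)}$ set-theoretically: the closure meets the hypersurface $t=0$ in a nonempty closed set of dimension at least $n-1$ by Krull's principal ideal theorem, and $\Pet{n}$ is irreducible of dimension $n-1$, so the intersection is all of $\Pet{n}$. But this does not preclude $\mathcal{X}$ from carrying extra nilpotent or embedded structure over $t=0$, which is what your associated-point criterion actually requires you to rule out. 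Second, even granting flatness of $\overline{\pi^{-1}(\C^*)}\to\C$, deformation invariance of cycle classes only yields $m\,[\Pet{n}]=[\Perm{n}]$ in $H_*(Fl_n;\Z)$, where $m$ is the length of the local ring of the special fiber at the generic point of $\Pet{n}$; you do not explain why $m=1$, and this is not automatic (multiplicities can jump in limits even when the support is what you expect). Both of these issues are resolved in \cite{ab-de-ga-ha}; without reproducing that work or an alternative to it, your proposal is a correct strategy but not a proof.
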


\vspace{10pt}

\subsection{Combinatorics on Dynkin diagrams of type A}\label{subsec: comb on J}
Recall from our notation that $[n-1]= \{1,2,\ldots,n-1\}$. We regard it as the set of vertices of the Dynkin diagram of type $A_{n-1}$. Namely, two vertices $i,j\in[n-1]$ are connected by an edge if and only if $|i-j|=1$. See Figure~\ref{pic: Dynkin diagram}.\\ \vspace{-5pt}
\begin{figure}[htbp]
\[
{\unitlength 0.1in%
\begin{picture}(18.8000,2.0000)(15.6000,-15.2000)%
%
\special{pn 8}%
\special{ar 1600 1400 40 40 0.0000000 6.2831853}%
%
\special{pn 8}%
\special{ar 1900 1400 40 40 0.0000000 6.2831853}%
%
\special{pn 8}%
\special{ar 2200 1400 40 40 0.0000000 6.2831853}%
%
\special{pn 8}%
\special{pa 2160 1400}%
\special{pa 1940 1400}%
\special{fp}%
%
\special{pn 8}%
\special{pa 1860 1400}%
\special{pa 1640 1400}%
\special{fp}%
%
\special{pn 8}%
\special{ar 3400 1400 40 40 0.0000000 6.2831853}%
%
\special{pn 8}%
\special{pa 3360 1400}%
\special{pa 3140 1400}%
\special{fp}%
%
\special{pn 8}%
\special{pa 2460 1400}%
\special{pa 2240 1400}%
\special{fp}%
\put(26.6000,-14.5000){\makebox(0,0)[lb]{$\cdots$}}%
\put(15.7000,-16.5000){\makebox(0,0)[lb]{$1$}}%
\put(21.7000,-16.5000){\makebox(0,0)[lb]{$3$}}%
\put(18.7000,-16.5000){\makebox(0,0)[lb]{$2$}}%
\put(32.6000,-16.5000){\makebox(0,0)[lb]{$n-1$}}%
\end{picture}}%
\]
\caption{The Dynkin diagram of type A$_{n-1}$.}
\label{pic: Dynkin diagram}
\end{figure}

We also regard each subset $J\subseteq[n-1]$ as a full-subgraph of the Dynkin diagram. We may decompose it into the connected components:
\begin{align*}
 J = J_1 \sqcup J_2 \sqcup \cdots \sqcup J_{m},
\end{align*}
where $J_k\ (1\le k\le m)$ is the set of vertices of a maximal connected subgraph of $J$. To determine each $J_k$ uniquely, we require that elements of $J_k$ are less than elements of $J_{k'}$ when $k< k'$.

\begin{example}
Let $n=10$ and $J=\{1,2,4,5,6,9\}$. Then we have 
\begin{align*}
 J_1=\{1,2\},\ J_2=\{4,5,6\},\ J_3=\{9\}
\end{align*}
so that $J = J_1\sqcup J_2\sqcup J_3 = \{1,2\} \sqcup\{4,5,6\} \sqcup\{9\}$.
\end{example}

\vspace{10pt}

For $J\subseteq[n-1]$, let us consider the associated Young subgroup 
\begin{align*}
 \mathfrak{S}_J \coloneqq 
 \mathfrak{S}_{J_1}\times \mathfrak{S}_{J_2} \times \cdots \times \mathfrak{S}_{J_m}
 \subseteq \Sn,
\end{align*}
where $\mathfrak{S}_{J_k}\ (1\le k\le m)$ is the subgroup of $\Sn$ generated by the simple reflections $s_i$ for all $i\in J_k$. 
Let $w_J$ be the longest element of $\mathfrak{S}_J$, i.e.,
\begin{align}\label{eq: def of wJ}
 w_J \coloneqq w_0^{(J_1)} w_0^{(J_2)} \cdots w_0^{(J_{m})} \in \mathfrak{S}_J, 
\end{align}
where $w_0^{(J_k)}$ is the longest element of $\mathfrak{S}_{J_k}$ $(1\le k\le m)$.

\begin{example}\label{eg: wJ}
{\rm
If $n=10$ and $J=\{1,2\} \sqcup\{4,5,6\} \sqcup\{9\} = J_1\sqcup J_2\sqcup J_3$ as above, then the permutation $w_J$ in the form of its permutation matrix is given by
\begin{align*}
 w_J 
 = w_0^{(J_1)} w_0^{(J_2)} w_0^{(J_3)}
 =
 \left(
 \begin{array}{@{\,}ccc|cccc|c|cc@{\,}}
     & & 1 & & & & & & & \\
     & 1 & & & & & & & & \\ 
    1 & & & & & & & & & \\ \hline
     & & & & & & 1& & & \\
     & & & & & 1 & & & & \\
     & & & & 1 & & & & & \\
     & & & 1 & & & & & & \\ \hline
     & & & & & & & 1 & & \\ \hline
     & & & & & & & & & 1 \\
     & & & & & & & & 1 & 
 \end{array}
 \right).
\end{align*}
}
\end{example}

\vspace{10pt}

For $J\subseteq[n-1]$, there is a natural Hessenberg function which is determined by $J$ as follows. Let $h_J\colon [n]\rightarrow[n]$ be a function given by
\begin{align}\label{eq: def of hJ}
 h_J (j) =
 \begin{cases}
  j+1 \quad &\text{if $j\in J$},\\
  j &\text{if $j\notin J$}.
 \end{cases}
\end{align}
We note that
\begin{align}\label{eq: ineq for hJ}
 h_J(j)\le h_2(j) \quad \text{for $1\le j\le n$},
\end{align}
where $h_2$ is the Hessenberg function defined in \eqref{eq: toric Hessenberg function}.

\begin{example}
{\rm
If $n=10$ and $J=\{1,2,4,5,6,9\}$ as above, then the configuration of boxes of $h_J$ is given in Figure $\ref{pic: example of hJ}$ (cf.\ Example~\ref{eg: wJ}).}
\begin{figure}[htbp]
{\unitlength 0.1in%
\begin{picture}(20.0000,20.0000)(36.0000,-28.0000)%
%
\special{pn 0}%
\special{sh 0.150}%
\special{pa 3600 800}%
\special{pa 3800 800}%
\special{pa 3800 1000}%
\special{pa 3600 1000}%
\special{pa 3600 800}%
\special{ip}%
\special{pn 8}%
\special{pa 3600 800}%
\special{pa 3800 800}%
\special{pa 3800 1000}%
\special{pa 3600 1000}%
\special{pa 3600 800}%
\special{pa 3800 800}%
\special{fp}%
%
\special{pn 0}%
\special{sh 0.150}%
\special{pa 3800 800}%
\special{pa 4000 800}%
\special{pa 4000 1000}%
\special{pa 3800 1000}%
\special{pa 3800 800}%
\special{ip}%
\special{pn 8}%
\special{pa 3800 800}%
\special{pa 4000 800}%
\special{pa 4000 1000}%
\special{pa 3800 1000}%
\special{pa 3800 800}%
\special{pa 4000 800}%
\special{fp}%
%
\special{pn 0}%
\special{sh 0.150}%
\special{pa 4000 800}%
\special{pa 4200 800}%
\special{pa 4200 1000}%
\special{pa 4000 1000}%
\special{pa 4000 800}%
\special{ip}%
\special{pn 8}%
\special{pa 4000 800}%
\special{pa 4200 800}%
\special{pa 4200 1000}%
\special{pa 4000 1000}%
\special{pa 4000 800}%
\special{pa 4200 800}%
\special{fp}%
%
\special{pn 0}%
\special{sh 0.150}%
\special{pa 4200 800}%
\special{pa 4400 800}%
\special{pa 4400 1000}%
\special{pa 4200 1000}%
\special{pa 4200 800}%
\special{ip}%
\special{pn 8}%
\special{pa 4200 800}%
\special{pa 4400 800}%
\special{pa 4400 1000}%
\special{pa 4200 1000}%
\special{pa 4200 800}%
\special{pa 4400 800}%
\special{fp}%
%
\special{pn 0}%
\special{sh 0.150}%
\special{pa 4400 800}%
\special{pa 4600 800}%
\special{pa 4600 1000}%
\special{pa 4400 1000}%
\special{pa 4400 800}%
\special{ip}%
\special{pn 8}%
\special{pa 4400 800}%
\special{pa 4600 800}%
\special{pa 4600 1000}%
\special{pa 4400 1000}%
\special{pa 4400 800}%
\special{pa 4600 800}%
\special{fp}%
%
\special{pn 0}%
\special{sh 0.150}%
\special{pa 4600 800}%
\special{pa 4800 800}%
\special{pa 4800 1000}%
\special{pa 4600 1000}%
\special{pa 4600 800}%
\special{ip}%
\special{pn 8}%
\special{pa 4600 800}%
\special{pa 4800 800}%
\special{pa 4800 1000}%
\special{pa 4600 1000}%
\special{pa 4600 800}%
\special{pa 4800 800}%
\special{fp}%
%
\special{pn 0}%
\special{sh 0.150}%
\special{pa 4800 800}%
\special{pa 5000 800}%
\special{pa 5000 1000}%
\special{pa 4800 1000}%
\special{pa 4800 800}%
\special{ip}%
\special{pn 8}%
\special{pa 4800 800}%
\special{pa 5000 800}%
\special{pa 5000 1000}%
\special{pa 4800 1000}%
\special{pa 4800 800}%
\special{pa 5000 800}%
\special{fp}%
%
\special{pn 0}%
\special{sh 0.150}%
\special{pa 5000 800}%
\special{pa 5200 800}%
\special{pa 5200 1000}%
\special{pa 5000 1000}%
\special{pa 5000 800}%
\special{ip}%
\special{pn 8}%
\special{pa 5000 800}%
\special{pa 5200 800}%
\special{pa 5200 1000}%
\special{pa 5000 1000}%
\special{pa 5000 800}%
\special{pa 5200 800}%
\special{fp}%
%
\special{pn 0}%
\special{sh 0.150}%
\special{pa 5200 800}%
\special{pa 5400 800}%
\special{pa 5400 1000}%
\special{pa 5200 1000}%
\special{pa 5200 800}%
\special{ip}%
\special{pn 8}%
\special{pa 5200 800}%
\special{pa 5400 800}%
\special{pa 5400 1000}%
\special{pa 5200 1000}%
\special{pa 5200 800}%
\special{pa 5400 800}%
\special{fp}%
%
\special{pn 0}%
\special{sh 0.150}%
\special{pa 5400 800}%
\special{pa 5600 800}%
\special{pa 5600 1000}%
\special{pa 5400 1000}%
\special{pa 5400 800}%
\special{ip}%
\special{pn 8}%
\special{pa 5400 800}%
\special{pa 5600 800}%
\special{pa 5600 1000}%
\special{pa 5400 1000}%
\special{pa 5400 800}%
\special{pa 5600 800}%
\special{fp}%
%
\special{pn 0}%
\special{sh 0.150}%
\special{pa 3600 1000}%
\special{pa 3800 1000}%
\special{pa 3800 1200}%
\special{pa 3600 1200}%
\special{pa 3600 1000}%
\special{ip}%
\special{pn 8}%
\special{pa 3600 1000}%
\special{pa 3800 1000}%
\special{pa 3800 1200}%
\special{pa 3600 1200}%
\special{pa 3600 1000}%
\special{pa 3800 1000}%
\special{fp}%
%
\special{pn 0}%
\special{sh 0.150}%
\special{pa 3800 1000}%
\special{pa 4000 1000}%
\special{pa 4000 1200}%
\special{pa 3800 1200}%
\special{pa 3800 1000}%
\special{ip}%
\special{pn 8}%
\special{pa 3800 1000}%
\special{pa 4000 1000}%
\special{pa 4000 1200}%
\special{pa 3800 1200}%
\special{pa 3800 1000}%
\special{pa 4000 1000}%
\special{fp}%
%
\special{pn 0}%
\special{sh 0.150}%
\special{pa 4000 1000}%
\special{pa 4200 1000}%
\special{pa 4200 1200}%
\special{pa 4000 1200}%
\special{pa 4000 1000}%
\special{ip}%
\special{pn 8}%
\special{pa 4000 1000}%
\special{pa 4200 1000}%
\special{pa 4200 1200}%
\special{pa 4000 1200}%
\special{pa 4000 1000}%
\special{pa 4200 1000}%
\special{fp}%
%
\special{pn 0}%
\special{sh 0.150}%
\special{pa 4200 1000}%
\special{pa 4400 1000}%
\special{pa 4400 1200}%
\special{pa 4200 1200}%
\special{pa 4200 1000}%
\special{ip}%
\special{pn 8}%
\special{pa 4200 1000}%
\special{pa 4400 1000}%
\special{pa 4400 1200}%
\special{pa 4200 1200}%
\special{pa 4200 1000}%
\special{pa 4400 1000}%
\special{fp}%
%
\special{pn 0}%
\special{sh 0.150}%
\special{pa 4400 1000}%
\special{pa 4600 1000}%
\special{pa 4600 1200}%
\special{pa 4400 1200}%
\special{pa 4400 1000}%
\special{ip}%
\special{pn 8}%
\special{pa 4400 1000}%
\special{pa 4600 1000}%
\special{pa 4600 1200}%
\special{pa 4400 1200}%
\special{pa 4400 1000}%
\special{pa 4600 1000}%
\special{fp}%
%
\special{pn 0}%
\special{sh 0.150}%
\special{pa 4600 1000}%
\special{pa 4800 1000}%
\special{pa 4800 1200}%
\special{pa 4600 1200}%
\special{pa 4600 1000}%
\special{ip}%
\special{pn 8}%
\special{pa 4600 1000}%
\special{pa 4800 1000}%
\special{pa 4800 1200}%
\special{pa 4600 1200}%
\special{pa 4600 1000}%
\special{pa 4800 1000}%
\special{fp}%
%
\special{pn 0}%
\special{sh 0.150}%
\special{pa 4800 1000}%
\special{pa 5000 1000}%
\special{pa 5000 1200}%
\special{pa 4800 1200}%
\special{pa 4800 1000}%
\special{ip}%
\special{pn 8}%
\special{pa 4800 1000}%
\special{pa 5000 1000}%
\special{pa 5000 1200}%
\special{pa 4800 1200}%
\special{pa 4800 1000}%
\special{pa 5000 1000}%
\special{fp}%
%
\special{pn 0}%
\special{sh 0.150}%
\special{pa 5000 1000}%
\special{pa 5200 1000}%
\special{pa 5200 1200}%
\special{pa 5000 1200}%
\special{pa 5000 1000}%
\special{ip}%
\special{pn 8}%
\special{pa 5000 1000}%
\special{pa 5200 1000}%
\special{pa 5200 1200}%
\special{pa 5000 1200}%
\special{pa 5000 1000}%
\special{pa 5200 1000}%
\special{fp}%
%
\special{pn 0}%
\special{sh 0.150}%
\special{pa 5200 1000}%
\special{pa 5400 1000}%
\special{pa 5400 1200}%
\special{pa 5200 1200}%
\special{pa 5200 1000}%
\special{ip}%
\special{pn 8}%
\special{pa 5200 1000}%
\special{pa 5400 1000}%
\special{pa 5400 1200}%
\special{pa 5200 1200}%
\special{pa 5200 1000}%
\special{pa 5400 1000}%
\special{fp}%
%
\special{pn 0}%
\special{sh 0.150}%
\special{pa 5400 1000}%
\special{pa 5600 1000}%
\special{pa 5600 1200}%
\special{pa 5400 1200}%
\special{pa 5400 1000}%
\special{ip}%
\special{pn 8}%
\special{pa 5400 1000}%
\special{pa 5600 1000}%
\special{pa 5600 1200}%
\special{pa 5400 1200}%
\special{pa 5400 1000}%
\special{pa 5600 1000}%
\special{fp}%
%
\special{pn 8}%
\special{pa 3600 1200}%
\special{pa 3800 1200}%
\special{pa 3800 1400}%
\special{pa 3600 1400}%
\special{pa 3600 1200}%
\special{pa 3800 1200}%
\special{fp}%
%
\special{pn 0}%
\special{sh 0.150}%
\special{pa 3800 1200}%
\special{pa 4000 1200}%
\special{pa 4000 1400}%
\special{pa 3800 1400}%
\special{pa 3800 1200}%
\special{ip}%
\special{pn 8}%
\special{pa 3800 1200}%
\special{pa 4000 1200}%
\special{pa 4000 1400}%
\special{pa 3800 1400}%
\special{pa 3800 1200}%
\special{pa 4000 1200}%
\special{fp}%
%
\special{pn 0}%
\special{sh 0.150}%
\special{pa 4000 1200}%
\special{pa 4200 1200}%
\special{pa 4200 1400}%
\special{pa 4000 1400}%
\special{pa 4000 1200}%
\special{ip}%
\special{pn 8}%
\special{pa 4000 1200}%
\special{pa 4200 1200}%
\special{pa 4200 1400}%
\special{pa 4000 1400}%
\special{pa 4000 1200}%
\special{pa 4200 1200}%
\special{fp}%
%
\special{pn 0}%
\special{sh 0.150}%
\special{pa 4200 1200}%
\special{pa 4400 1200}%
\special{pa 4400 1400}%
\special{pa 4200 1400}%
\special{pa 4200 1200}%
\special{ip}%
\special{pn 8}%
\special{pa 4200 1200}%
\special{pa 4400 1200}%
\special{pa 4400 1400}%
\special{pa 4200 1400}%
\special{pa 4200 1200}%
\special{pa 4400 1200}%
\special{fp}%
%
\special{pn 0}%
\special{sh 0.150}%
\special{pa 4400 1200}%
\special{pa 4600 1200}%
\special{pa 4600 1400}%
\special{pa 4400 1400}%
\special{pa 4400 1200}%
\special{ip}%
\special{pn 8}%
\special{pa 4400 1200}%
\special{pa 4600 1200}%
\special{pa 4600 1400}%
\special{pa 4400 1400}%
\special{pa 4400 1200}%
\special{pa 4600 1200}%
\special{fp}%
%
\special{pn 0}%
\special{sh 0.150}%
\special{pa 4600 1200}%
\special{pa 4800 1200}%
\special{pa 4800 1400}%
\special{pa 4600 1400}%
\special{pa 4600 1200}%
\special{ip}%
\special{pn 8}%
\special{pa 4600 1200}%
\special{pa 4800 1200}%
\special{pa 4800 1400}%
\special{pa 4600 1400}%
\special{pa 4600 1200}%
\special{pa 4800 1200}%
\special{fp}%
%
\special{pn 0}%
\special{sh 0.150}%
\special{pa 4800 1200}%
\special{pa 5000 1200}%
\special{pa 5000 1400}%
\special{pa 4800 1400}%
\special{pa 4800 1200}%
\special{ip}%
\special{pn 8}%
\special{pa 4800 1200}%
\special{pa 5000 1200}%
\special{pa 5000 1400}%
\special{pa 4800 1400}%
\special{pa 4800 1200}%
\special{pa 5000 1200}%
\special{fp}%
%
\special{pn 0}%
\special{sh 0.150}%
\special{pa 5000 1200}%
\special{pa 5200 1200}%
\special{pa 5200 1400}%
\special{pa 5000 1400}%
\special{pa 5000 1200}%
\special{ip}%
\special{pn 8}%
\special{pa 5000 1200}%
\special{pa 5200 1200}%
\special{pa 5200 1400}%
\special{pa 5000 1400}%
\special{pa 5000 1200}%
\special{pa 5200 1200}%
\special{fp}%
%
\special{pn 0}%
\special{sh 0.150}%
\special{pa 5200 1200}%
\special{pa 5400 1200}%
\special{pa 5400 1400}%
\special{pa 5200 1400}%
\special{pa 5200 1200}%
\special{ip}%
\special{pn 8}%
\special{pa 5200 1200}%
\special{pa 5400 1200}%
\special{pa 5400 1400}%
\special{pa 5200 1400}%
\special{pa 5200 1200}%
\special{pa 5400 1200}%
\special{fp}%
%
\special{pn 0}%
\special{sh 0.150}%
\special{pa 5400 1200}%
\special{pa 5600 1200}%
\special{pa 5600 1400}%
\special{pa 5400 1400}%
\special{pa 5400 1200}%
\special{ip}%
\special{pn 8}%
\special{pa 5400 1200}%
\special{pa 5600 1200}%
\special{pa 5600 1400}%
\special{pa 5400 1400}%
\special{pa 5400 1200}%
\special{pa 5600 1200}%
\special{fp}%
%
\special{pn 8}%
\special{pa 3600 1400}%
\special{pa 3800 1400}%
\special{pa 3800 1600}%
\special{pa 3600 1600}%
\special{pa 3600 1400}%
\special{pa 3800 1400}%
\special{fp}%
%
\special{pn 8}%
\special{pa 3800 1400}%
\special{pa 4000 1400}%
\special{pa 4000 1600}%
\special{pa 3800 1600}%
\special{pa 3800 1400}%
\special{pa 4000 1400}%
\special{fp}%
%
\special{pn 8}%
\special{pa 4000 1400}%
\special{pa 4200 1400}%
\special{pa 4200 1600}%
\special{pa 4000 1600}%
\special{pa 4000 1400}%
\special{pa 4200 1400}%
\special{fp}%
%
\special{pn 0}%
\special{sh 0.150}%
\special{pa 4200 1400}%
\special{pa 4400 1400}%
\special{pa 4400 1600}%
\special{pa 4200 1600}%
\special{pa 4200 1400}%
\special{ip}%
\special{pn 8}%
\special{pa 4200 1400}%
\special{pa 4400 1400}%
\special{pa 4400 1600}%
\special{pa 4200 1600}%
\special{pa 4200 1400}%
\special{pa 4400 1400}%
\special{fp}%
%
\special{pn 0}%
\special{sh 0.150}%
\special{pa 4400 1400}%
\special{pa 4600 1400}%
\special{pa 4600 1600}%
\special{pa 4400 1600}%
\special{pa 4400 1400}%
\special{ip}%
\special{pn 8}%
\special{pa 4400 1400}%
\special{pa 4600 1400}%
\special{pa 4600 1600}%
\special{pa 4400 1600}%
\special{pa 4400 1400}%
\special{pa 4600 1400}%
\special{fp}%
%
\special{pn 0}%
\special{sh 0.150}%
\special{pa 4600 1400}%
\special{pa 4800 1400}%
\special{pa 4800 1600}%
\special{pa 4600 1600}%
\special{pa 4600 1400}%
\special{ip}%
\special{pn 8}%
\special{pa 4600 1400}%
\special{pa 4800 1400}%
\special{pa 4800 1600}%
\special{pa 4600 1600}%
\special{pa 4600 1400}%
\special{pa 4800 1400}%
\special{fp}%
%
\special{pn 0}%
\special{sh 0.150}%
\special{pa 4800 1400}%
\special{pa 5000 1400}%
\special{pa 5000 1600}%
\special{pa 4800 1600}%
\special{pa 4800 1400}%
\special{ip}%
\special{pn 8}%
\special{pa 4800 1400}%
\special{pa 5000 1400}%
\special{pa 5000 1600}%
\special{pa 4800 1600}%
\special{pa 4800 1400}%
\special{pa 5000 1400}%
\special{fp}%
%
\special{pn 0}%
\special{sh 0.150}%
\special{pa 5000 1400}%
\special{pa 5200 1400}%
\special{pa 5200 1600}%
\special{pa 5000 1600}%
\special{pa 5000 1400}%
\special{ip}%
\special{pn 8}%
\special{pa 5000 1400}%
\special{pa 5200 1400}%
\special{pa 5200 1600}%
\special{pa 5000 1600}%
\special{pa 5000 1400}%
\special{pa 5200 1400}%
\special{fp}%
%
\special{pn 0}%
\special{sh 0.150}%
\special{pa 5200 1400}%
\special{pa 5400 1400}%
\special{pa 5400 1600}%
\special{pa 5200 1600}%
\special{pa 5200 1400}%
\special{ip}%
\special{pn 8}%
\special{pa 5200 1400}%
\special{pa 5400 1400}%
\special{pa 5400 1600}%
\special{pa 5200 1600}%
\special{pa 5200 1400}%
\special{pa 5400 1400}%
\special{fp}%
%
\special{pn 0}%
\special{sh 0.150}%
\special{pa 5400 1400}%
\special{pa 5600 1400}%
\special{pa 5600 1600}%
\special{pa 5400 1600}%
\special{pa 5400 1400}%
\special{ip}%
\special{pn 8}%
\special{pa 5400 1400}%
\special{pa 5600 1400}%
\special{pa 5600 1600}%
\special{pa 5400 1600}%
\special{pa 5400 1400}%
\special{pa 5600 1400}%
\special{fp}%
%
\special{pn 8}%
\special{pa 3600 1600}%
\special{pa 3800 1600}%
\special{pa 3800 1800}%
\special{pa 3600 1800}%
\special{pa 3600 1600}%
\special{pa 3800 1600}%
\special{fp}%
%
\special{pn 8}%
\special{pa 3800 1600}%
\special{pa 4000 1600}%
\special{pa 4000 1800}%
\special{pa 3800 1800}%
\special{pa 3800 1600}%
\special{pa 4000 1600}%
\special{fp}%
%
\special{pn 8}%
\special{pa 4000 1600}%
\special{pa 4200 1600}%
\special{pa 4200 1800}%
\special{pa 4000 1800}%
\special{pa 4000 1600}%
\special{pa 4200 1600}%
\special{fp}%
%
\special{pn 0}%
\special{sh 0.150}%
\special{pa 4200 1600}%
\special{pa 4400 1600}%
\special{pa 4400 1800}%
\special{pa 4200 1800}%
\special{pa 4200 1600}%
\special{ip}%
\special{pn 8}%
\special{pa 4200 1600}%
\special{pa 4400 1600}%
\special{pa 4400 1800}%
\special{pa 4200 1800}%
\special{pa 4200 1600}%
\special{pa 4400 1600}%
\special{fp}%
%
\special{pn 0}%
\special{sh 0.150}%
\special{pa 4400 1600}%
\special{pa 4600 1600}%
\special{pa 4600 1800}%
\special{pa 4400 1800}%
\special{pa 4400 1600}%
\special{ip}%
\special{pn 8}%
\special{pa 4400 1600}%
\special{pa 4600 1600}%
\special{pa 4600 1800}%
\special{pa 4400 1800}%
\special{pa 4400 1600}%
\special{pa 4600 1600}%
\special{fp}%
%
\special{pn 0}%
\special{sh 0.150}%
\special{pa 4600 1600}%
\special{pa 4800 1600}%
\special{pa 4800 1800}%
\special{pa 4600 1800}%
\special{pa 4600 1600}%
\special{ip}%
\special{pn 8}%
\special{pa 4600 1600}%
\special{pa 4800 1600}%
\special{pa 4800 1800}%
\special{pa 4600 1800}%
\special{pa 4600 1600}%
\special{pa 4800 1600}%
\special{fp}%
%
\special{pn 0}%
\special{sh 0.150}%
\special{pa 4800 1600}%
\special{pa 5000 1600}%
\special{pa 5000 1800}%
\special{pa 4800 1800}%
\special{pa 4800 1600}%
\special{ip}%
\special{pn 8}%
\special{pa 4800 1600}%
\special{pa 5000 1600}%
\special{pa 5000 1800}%
\special{pa 4800 1800}%
\special{pa 4800 1600}%
\special{pa 5000 1600}%
\special{fp}%
%
\special{pn 0}%
\special{sh 0.150}%
\special{pa 5000 1600}%
\special{pa 5200 1600}%
\special{pa 5200 1800}%
\special{pa 5000 1800}%
\special{pa 5000 1600}%
\special{ip}%
\special{pn 8}%
\special{pa 5000 1600}%
\special{pa 5200 1600}%
\special{pa 5200 1800}%
\special{pa 5000 1800}%
\special{pa 5000 1600}%
\special{pa 5200 1600}%
\special{fp}%
%
\special{pn 0}%
\special{sh 0.150}%
\special{pa 5200 1600}%
\special{pa 5400 1600}%
\special{pa 5400 1800}%
\special{pa 5200 1800}%
\special{pa 5200 1600}%
\special{ip}%
\special{pn 8}%
\special{pa 5200 1600}%
\special{pa 5400 1600}%
\special{pa 5400 1800}%
\special{pa 5200 1800}%
\special{pa 5200 1600}%
\special{pa 5400 1600}%
\special{fp}%
%
\special{pn 0}%
\special{sh 0.150}%
\special{pa 5400 1600}%
\special{pa 5600 1600}%
\special{pa 5600 1800}%
\special{pa 5400 1800}%
\special{pa 5400 1600}%
\special{ip}%
\special{pn 8}%
\special{pa 5400 1600}%
\special{pa 5600 1600}%
\special{pa 5600 1800}%
\special{pa 5400 1800}%
\special{pa 5400 1600}%
\special{pa 5600 1600}%
\special{fp}%
%
\special{pn 8}%
\special{pa 3600 1800}%
\special{pa 3800 1800}%
\special{pa 3800 2000}%
\special{pa 3600 2000}%
\special{pa 3600 1800}%
\special{pa 3800 1800}%
\special{fp}%
%
\special{pn 8}%
\special{pa 3800 1800}%
\special{pa 4000 1800}%
\special{pa 4000 2000}%
\special{pa 3800 2000}%
\special{pa 3800 1800}%
\special{pa 4000 1800}%
\special{fp}%
%
\special{pn 8}%
\special{pa 4000 1800}%
\special{pa 4200 1800}%
\special{pa 4200 2000}%
\special{pa 4000 2000}%
\special{pa 4000 1800}%
\special{pa 4200 1800}%
\special{fp}%
%
\special{pn 8}%
\special{pa 4200 1800}%
\special{pa 4400 1800}%
\special{pa 4400 2000}%
\special{pa 4200 2000}%
\special{pa 4200 1800}%
\special{pa 4400 1800}%
\special{fp}%
%
\special{pn 0}%
\special{sh 0.150}%
\special{pa 4400 1800}%
\special{pa 4600 1800}%
\special{pa 4600 2000}%
\special{pa 4400 2000}%
\special{pa 4400 1800}%
\special{ip}%
\special{pn 8}%
\special{pa 4400 1800}%
\special{pa 4600 1800}%
\special{pa 4600 2000}%
\special{pa 4400 2000}%
\special{pa 4400 1800}%
\special{pa 4600 1800}%
\special{fp}%
%
\special{pn 0}%
\special{sh 0.150}%
\special{pa 4600 1800}%
\special{pa 4800 1800}%
\special{pa 4800 2000}%
\special{pa 4600 2000}%
\special{pa 4600 1800}%
\special{ip}%
\special{pn 8}%
\special{pa 4600 1800}%
\special{pa 4800 1800}%
\special{pa 4800 2000}%
\special{pa 4600 2000}%
\special{pa 4600 1800}%
\special{pa 4800 1800}%
\special{fp}%
%
\special{pn 0}%
\special{sh 0.150}%
\special{pa 4800 1800}%
\special{pa 5000 1800}%
\special{pa 5000 2000}%
\special{pa 4800 2000}%
\special{pa 4800 1800}%
\special{ip}%
\special{pn 8}%
\special{pa 4800 1800}%
\special{pa 5000 1800}%
\special{pa 5000 2000}%
\special{pa 4800 2000}%
\special{pa 4800 1800}%
\special{pa 5000 1800}%
\special{fp}%
%
\special{pn 0}%
\special{sh 0.150}%
\special{pa 5000 1800}%
\special{pa 5200 1800}%
\special{pa 5200 2000}%
\special{pa 5000 2000}%
\special{pa 5000 1800}%
\special{ip}%
\special{pn 8}%
\special{pa 5000 1800}%
\special{pa 5200 1800}%
\special{pa 5200 2000}%
\special{pa 5000 2000}%
\special{pa 5000 1800}%
\special{pa 5200 1800}%
\special{fp}%
%
\special{pn 0}%
\special{sh 0.150}%
\special{pa 5200 1800}%
\special{pa 5400 1800}%
\special{pa 5400 2000}%
\special{pa 5200 2000}%
\special{pa 5200 1800}%
\special{ip}%
\special{pn 8}%
\special{pa 5200 1800}%
\special{pa 5400 1800}%
\special{pa 5400 2000}%
\special{pa 5200 2000}%
\special{pa 5200 1800}%
\special{pa 5400 1800}%
\special{fp}%
%
\special{pn 0}%
\special{sh 0.150}%
\special{pa 5400 1800}%
\special{pa 5600 1800}%
\special{pa 5600 2000}%
\special{pa 5400 2000}%
\special{pa 5400 1800}%
\special{ip}%
\special{pn 8}%
\special{pa 5400 1800}%
\special{pa 5600 1800}%
\special{pa 5600 2000}%
\special{pa 5400 2000}%
\special{pa 5400 1800}%
\special{pa 5600 1800}%
\special{fp}%
%
\special{pn 8}%
\special{pa 3600 2000}%
\special{pa 3800 2000}%
\special{pa 3800 2200}%
\special{pa 3600 2200}%
\special{pa 3600 2000}%
\special{pa 3800 2000}%
\special{fp}%
%
\special{pn 8}%
\special{pa 3800 2000}%
\special{pa 4000 2000}%
\special{pa 4000 2200}%
\special{pa 3800 2200}%
\special{pa 3800 2000}%
\special{pa 4000 2000}%
\special{fp}%
%
\special{pn 8}%
\special{pa 4000 2000}%
\special{pa 4200 2000}%
\special{pa 4200 2200}%
\special{pa 4000 2200}%
\special{pa 4000 2000}%
\special{pa 4200 2000}%
\special{fp}%
%
\special{pn 8}%
\special{pa 4200 2000}%
\special{pa 4400 2000}%
\special{pa 4400 2200}%
\special{pa 4200 2200}%
\special{pa 4200 2000}%
\special{pa 4400 2000}%
\special{fp}%
%
\special{pn 8}%
\special{pa 4400 2000}%
\special{pa 4600 2000}%
\special{pa 4600 2200}%
\special{pa 4400 2200}%
\special{pa 4400 2000}%
\special{pa 4600 2000}%
\special{fp}%
%
\special{pn 0}%
\special{sh 0.150}%
\special{pa 4600 2000}%
\special{pa 4800 2000}%
\special{pa 4800 2200}%
\special{pa 4600 2200}%
\special{pa 4600 2000}%
\special{ip}%
\special{pn 8}%
\special{pa 4600 2000}%
\special{pa 4800 2000}%
\special{pa 4800 2200}%
\special{pa 4600 2200}%
\special{pa 4600 2000}%
\special{pa 4800 2000}%
\special{fp}%
%
\special{pn 0}%
\special{sh 0.150}%
\special{pa 4800 2000}%
\special{pa 5000 2000}%
\special{pa 5000 2200}%
\special{pa 4800 2200}%
\special{pa 4800 2000}%
\special{ip}%
\special{pn 8}%
\special{pa 4800 2000}%
\special{pa 5000 2000}%
\special{pa 5000 2200}%
\special{pa 4800 2200}%
\special{pa 4800 2000}%
\special{pa 5000 2000}%
\special{fp}%
%
\special{pn 0}%
\special{sh 0.150}%
\special{pa 5000 2000}%
\special{pa 5200 2000}%
\special{pa 5200 2200}%
\special{pa 5000 2200}%
\special{pa 5000 2000}%
\special{ip}%
\special{pn 8}%
\special{pa 5000 2000}%
\special{pa 5200 2000}%
\special{pa 5200 2200}%
\special{pa 5000 2200}%
\special{pa 5000 2000}%
\special{pa 5200 2000}%
\special{fp}%
%
\special{pn 0}%
\special{sh 0.150}%
\special{pa 5200 2000}%
\special{pa 5400 2000}%
\special{pa 5400 2200}%
\special{pa 5200 2200}%
\special{pa 5200 2000}%
\special{ip}%
\special{pn 8}%
\special{pa 5200 2000}%
\special{pa 5400 2000}%
\special{pa 5400 2200}%
\special{pa 5200 2200}%
\special{pa 5200 2000}%
\special{pa 5400 2000}%
\special{fp}%
%
\special{pn 0}%
\special{sh 0.150}%
\special{pa 5400 2000}%
\special{pa 5600 2000}%
\special{pa 5600 2200}%
\special{pa 5400 2200}%
\special{pa 5400 2000}%
\special{ip}%
\special{pn 8}%
\special{pa 5400 2000}%
\special{pa 5600 2000}%
\special{pa 5600 2200}%
\special{pa 5400 2200}%
\special{pa 5400 2000}%
\special{pa 5600 2000}%
\special{fp}%
%
\special{pn 8}%
\special{pa 3600 2200}%
\special{pa 3800 2200}%
\special{pa 3800 2400}%
\special{pa 3600 2400}%
\special{pa 3600 2200}%
\special{pa 3800 2200}%
\special{fp}%
%
\special{pn 8}%
\special{pa 3800 2200}%
\special{pa 4000 2200}%
\special{pa 4000 2400}%
\special{pa 3800 2400}%
\special{pa 3800 2200}%
\special{pa 4000 2200}%
\special{fp}%
%
\special{pn 8}%
\special{pa 4000 2200}%
\special{pa 4200 2200}%
\special{pa 4200 2400}%
\special{pa 4000 2400}%
\special{pa 4000 2200}%
\special{pa 4200 2200}%
\special{fp}%
%
\special{pn 8}%
\special{pa 4200 2200}%
\special{pa 4400 2200}%
\special{pa 4400 2400}%
\special{pa 4200 2400}%
\special{pa 4200 2200}%
\special{pa 4400 2200}%
\special{fp}%
%
\special{pn 8}%
\special{pa 4400 2200}%
\special{pa 4600 2200}%
\special{pa 4600 2400}%
\special{pa 4400 2400}%
\special{pa 4400 2200}%
\special{pa 4600 2200}%
\special{fp}%
%
\special{pn 8}%
\special{pa 4600 2200}%
\special{pa 4800 2200}%
\special{pa 4800 2400}%
\special{pa 4600 2400}%
\special{pa 4600 2200}%
\special{pa 4800 2200}%
\special{fp}%
%
\special{pn 8}%
\special{pa 4800 2200}%
\special{pa 5000 2200}%
\special{pa 5000 2400}%
\special{pa 4800 2400}%
\special{pa 4800 2200}%
\special{pa 5000 2200}%
\special{fp}%
%
\special{pn 0}%
\special{sh 0.150}%
\special{pa 5000 2200}%
\special{pa 5200 2200}%
\special{pa 5200 2400}%
\special{pa 5000 2400}%
\special{pa 5000 2200}%
\special{ip}%
\special{pn 8}%
\special{pa 5000 2200}%
\special{pa 5200 2200}%
\special{pa 5200 2400}%
\special{pa 5000 2400}%
\special{pa 5000 2200}%
\special{pa 5200 2200}%
\special{fp}%
%
\special{pn 0}%
\special{sh 0.150}%
\special{pa 5200 2200}%
\special{pa 5400 2200}%
\special{pa 5400 2400}%
\special{pa 5200 2400}%
\special{pa 5200 2200}%
\special{ip}%
\special{pn 8}%
\special{pa 5200 2200}%
\special{pa 5400 2200}%
\special{pa 5400 2400}%
\special{pa 5200 2400}%
\special{pa 5200 2200}%
\special{pa 5400 2200}%
\special{fp}%
%
\special{pn 0}%
\special{sh 0.150}%
\special{pa 5400 2200}%
\special{pa 5600 2200}%
\special{pa 5600 2400}%
\special{pa 5400 2400}%
\special{pa 5400 2200}%
\special{ip}%
\special{pn 8}%
\special{pa 5400 2200}%
\special{pa 5600 2200}%
\special{pa 5600 2400}%
\special{pa 5400 2400}%
\special{pa 5400 2200}%
\special{pa 5600 2200}%
\special{fp}%
%
\special{pn 8}%
\special{pa 3600 2400}%
\special{pa 3800 2400}%
\special{pa 3800 2600}%
\special{pa 3600 2600}%
\special{pa 3600 2400}%
\special{pa 3800 2400}%
\special{fp}%
%
\special{pn 8}%
\special{pa 3800 2400}%
\special{pa 4000 2400}%
\special{pa 4000 2600}%
\special{pa 3800 2600}%
\special{pa 3800 2400}%
\special{pa 4000 2400}%
\special{fp}%
%
\special{pn 8}%
\special{pa 4000 2400}%
\special{pa 4200 2400}%
\special{pa 4200 2600}%
\special{pa 4000 2600}%
\special{pa 4000 2400}%
\special{pa 4200 2400}%
\special{fp}%
%
\special{pn 8}%
\special{pa 4200 2400}%
\special{pa 4400 2400}%
\special{pa 4400 2600}%
\special{pa 4200 2600}%
\special{pa 4200 2400}%
\special{pa 4400 2400}%
\special{fp}%
%
\special{pn 8}%
\special{pa 4400 2400}%
\special{pa 4600 2400}%
\special{pa 4600 2600}%
\special{pa 4400 2600}%
\special{pa 4400 2400}%
\special{pa 4600 2400}%
\special{fp}%
%
\special{pn 8}%
\special{pa 4600 2400}%
\special{pa 4800 2400}%
\special{pa 4800 2600}%
\special{pa 4600 2600}%
\special{pa 4600 2400}%
\special{pa 4800 2400}%
\special{fp}%
%
\special{pn 8}%
\special{pa 4800 2400}%
\special{pa 5000 2400}%
\special{pa 5000 2600}%
\special{pa 4800 2600}%
\special{pa 4800 2400}%
\special{pa 5000 2400}%
\special{fp}%
%
\special{pn 8}%
\special{pa 5000 2400}%
\special{pa 5200 2400}%
\special{pa 5200 2600}%
\special{pa 5000 2600}%
\special{pa 5000 2400}%
\special{pa 5200 2400}%
\special{fp}%
%
\special{pn 0}%
\special{sh 0.150}%
\special{pa 5200 2400}%
\special{pa 5400 2400}%
\special{pa 5400 2600}%
\special{pa 5200 2600}%
\special{pa 5200 2400}%
\special{ip}%
\special{pn 8}%
\special{pa 5200 2400}%
\special{pa 5400 2400}%
\special{pa 5400 2600}%
\special{pa 5200 2600}%
\special{pa 5200 2400}%
\special{pa 5400 2400}%
\special{fp}%
%
\special{pn 0}%
\special{sh 0.150}%
\special{pa 5400 2400}%
\special{pa 5600 2400}%
\special{pa 5600 2600}%
\special{pa 5400 2600}%
\special{pa 5400 2400}%
\special{ip}%
\special{pn 8}%
\special{pa 5400 2400}%
\special{pa 5600 2400}%
\special{pa 5600 2600}%
\special{pa 5400 2600}%
\special{pa 5400 2400}%
\special{pa 5600 2400}%
\special{fp}%
%
\special{pn 8}%
\special{pa 3600 2600}%
\special{pa 3800 2600}%
\special{pa 3800 2800}%
\special{pa 3600 2800}%
\special{pa 3600 2600}%
\special{pa 3800 2600}%
\special{fp}%
%
\special{pn 8}%
\special{pa 3800 2600}%
\special{pa 4000 2600}%
\special{pa 4000 2800}%
\special{pa 3800 2800}%
\special{pa 3800 2600}%
\special{pa 4000 2600}%
\special{fp}%
%
\special{pn 8}%
\special{pa 4000 2600}%
\special{pa 4200 2600}%
\special{pa 4200 2800}%
\special{pa 4000 2800}%
\special{pa 4000 2600}%
\special{pa 4200 2600}%
\special{fp}%
%
\special{pn 8}%
\special{pa 4200 2600}%
\special{pa 4400 2600}%
\special{pa 4400 2800}%
\special{pa 4200 2800}%
\special{pa 4200 2600}%
\special{pa 4400 2600}%
\special{fp}%
%
\special{pn 8}%
\special{pa 4400 2600}%
\special{pa 4600 2600}%
\special{pa 4600 2800}%
\special{pa 4400 2800}%
\special{pa 4400 2600}%
\special{pa 4600 2600}%
\special{fp}%
%
\special{pn 8}%
\special{pa 4600 2600}%
\special{pa 4800 2600}%
\special{pa 4800 2800}%
\special{pa 4600 2800}%
\special{pa 4600 2600}%
\special{pa 4800 2600}%
\special{fp}%
%
\special{pn 8}%
\special{pa 4800 2600}%
\special{pa 5000 2600}%
\special{pa 5000 2800}%
\special{pa 4800 2800}%
\special{pa 4800 2600}%
\special{pa 5000 2600}%
\special{fp}%
%
\special{pn 8}%
\special{pa 5000 2600}%
\special{pa 5200 2600}%
\special{pa 5200 2800}%
\special{pa 5000 2800}%
\special{pa 5000 2600}%
\special{pa 5200 2600}%
\special{fp}%
%
\special{pn 0}%
\special{sh 0.150}%
\special{pa 5200 2600}%
\special{pa 5400 2600}%
\special{pa 5400 2800}%
\special{pa 5200 2800}%
\special{pa 5200 2600}%
\special{ip}%
\special{pn 8}%
\special{pa 5200 2600}%
\special{pa 5400 2600}%
\special{pa 5400 2800}%
\special{pa 5200 2800}%
\special{pa 5200 2600}%
\special{pa 5400 2600}%
\special{fp}%
%
\special{pn 0}%
\special{sh 0.150}%
\special{pa 5400 2600}%
\special{pa 5600 2600}%
\special{pa 5600 2800}%
\special{pa 5400 2800}%
\special{pa 5400 2600}%
\special{ip}%
\special{pn 8}%
\special{pa 5400 2600}%
\special{pa 5600 2600}%
\special{pa 5600 2800}%
\special{pa 5400 2800}%
\special{pa 5400 2600}%
\special{pa 5600 2600}%
\special{fp}%
%
\special{pn 20}%
\special{pa 4200 800}%
\special{pa 4200 2800}%
\special{fp}%
%
\special{pn 20}%
\special{pa 3600 1400}%
\special{pa 5600 1400}%
\special{fp}%
%
\special{pn 20}%
\special{pa 5600 2200}%
\special{pa 3600 2200}%
\special{fp}%
%
\special{pn 20}%
\special{pa 3600 2400}%
\special{pa 5600 2400}%
\special{fp}%
%
\special{pn 20}%
\special{pa 5200 2800}%
\special{pa 5200 800}%
\special{fp}%
%
\special{pn 20}%
\special{pa 5000 800}%
\special{pa 5000 2800}%
\special{fp}%
\end{picture}}%
\caption{The Hessenberg function $h_J$}
\label{pic: example of hJ}
\end{figure}
\end{example}


\section{The relation between $H^*(\Pet{n};\Z)$ and $H^*(\Perm{n};\Z)$}
The aim of this section is to prove that there is an isomorphism
\begin{align*}
 H^*(\Pet{n};\Z) \cong  H^*(\Perm{n};\Z)^{\Sn}
\end{align*} 
as graded rings.

\subsection{The Schubert varieties $X_{w_J}$ associated with $w_J$}

Let $J\subseteq[n-1]$. 
Recall that we have the decomposition $J=J_1\sqcup \cdots \sqcup J_m$ into the connected components.
For $1\le k\le m$, we define $\overline{J_k}\subseteq[n]$ by
\begin{align*}
 \overline{J_k} \coloneqq J_k\sqcup\{\max J_k +1\}.
\end{align*}
We also set
\begin{align*}
 n_k \coloneqq |\overline{J_k}|=|J_k|+1 \qquad \text{for $1\le k\le m$}.
\end{align*}
The permutation $w_J\in\Sn$ defined in \eqref{eq: def of wJ} determines the corresponding Schubert variety $X_{w_J}\subseteq Fl_n$. Since $w_J$ is a product of longest permutations of smaller ranks (see also Example~\ref{eg: wJ}), it follows that the associated Schubert variety $X_{w_J}$ is isomorphic to a product of flag varieties of smaller ranks: 
\begin{align}\label{eq: decomp of XwJ}
 X_{w_J} \cong \prod_{k=1}^{m} Fl_{n_k} .
\end{align}
Although this is well-known, let us construct an explicit isomorphism \eqref{eq: decomp of XwJ} to use it in the next subsection.
We begin with the map
\begin{align}\label{eq: emb onto image 2}
 \prod_{k=1}^{m} \text{GL}_{n_k}(\C) \rightarrow \text{GL}_n(\C)
 \quad ; \quad 
 (g_1,\cdots,g_m) \mapsto g_J ,
\end{align}
where $g_J$ is an $n\times n$ block-diagonal matrix defined as follows. 
For $1\le k\le m$, the 
$\overline{J_k}\times \overline{J_k}(\subseteq[n]\times [n])$ diagonal block of $g_J$ is $g_k$, and the remaining diagonal blocks are matrices of size 1 having $1$ as their entries.

\begin{example}\label{eg: matrix embedding 1}
{\rm
Let $n=10$ and $J=\{1,2,4,5,6,9\}=\{1,2\}\sqcup\{4,5,6\}\sqcup\{9\}$ as above.
Then we have
\begin{align*}
\text{$\overline{J_1}=\{1,2,3\}$, $\overline{J_2}=\{4,5,6,7\}$, and $\overline{J_3}=\{9,10\}$}
\end{align*}
so that $n_1=3$, $n_2=4$, and $n_3=2$.
The map \eqref{eq: emb onto image 2} sends an element $(g_1,g_2,g_3)\in \text{GL}_3(\C)\times \text{GL}_4(\C)\times \text{GL}_2(\C)$ to the block-diagonal matrix 
\begin{align*}
{\unitlength 0.1in%
\begin{picture}(28.0000,19.4000)(12.3000,-47.9000)%
%
\special{pn 0}%
\special{sh 0.150}%
\special{pa 1750 2860}%
\special{pa 2360 2860}%
\special{pa 2360 3420}%
\special{pa 1750 3420}%
\special{pa 1750 2860}%
\special{ip}%
\special{pn 8}%
\special{pa 1750 2860}%
\special{pa 2360 2860}%
\special{pa 2360 3420}%
\special{pa 1750 3420}%
\special{pa 1750 2860}%
\special{ip}%
%
\special{pn 0}%
\special{sh 0.150}%
\special{pa 2360 3420}%
\special{pa 3230 3420}%
\special{pa 3230 4210}%
\special{pa 2360 4210}%
\special{pa 2360 3420}%
\special{ip}%
\special{pn 8}%
\special{pa 2360 3420}%
\special{pa 3230 3420}%
\special{pa 3230 4210}%
\special{pa 2360 4210}%
\special{pa 2360 3420}%
\special{ip}%
%
\special{pn 0}%
\special{sh 0.150}%
\special{pa 3230 4200}%
\special{pa 3450 4200}%
\special{pa 3450 4400}%
\special{pa 3230 4400}%
\special{pa 3230 4200}%
\special{ip}%
\special{pn 8}%
\special{pa 3230 4200}%
\special{pa 3450 4200}%
\special{pa 3450 4400}%
\special{pa 3230 4400}%
\special{pa 3230 4200}%
\special{ip}%
%
\special{pn 0}%
\special{sh 0.150}%
\special{pa 3450 4400}%
\special{pa 3850 4400}%
\special{pa 3850 4780}%
\special{pa 3450 4780}%
\special{pa 3450 4400}%
\special{ip}%
\special{pn 8}%
\special{pa 3450 4400}%
\special{pa 3850 4400}%
\special{pa 3850 4780}%
\special{pa 3450 4780}%
\special{pa 3450 4400}%
\special{ip}%
\put(19.8000,-31.9000){\makebox(0,0)[lb]{$g_1$}}%
\put(27.4000,-38.7000){\makebox(0,0)[lb]{$g_2$}}%
\put(36.0000,-46.4000){\makebox(0,0)[lb]{$g_3$}}%
%
\special{pn 4}%
\special{pa 2360 2850}%
\special{pa 2360 4790}%
\special{fp}%
%
\special{pn 4}%
\special{pa 1750 3420}%
\special{pa 3850 3420}%
\special{fp}%
%
\special{pn 4}%
\special{pa 3230 2850}%
\special{pa 3230 4790}%
\special{fp}%
\put(33.0000,-43.5000){\makebox(0,0)[lb]{$1$}}%
%
\special{pn 4}%
\special{pa 1750 4200}%
\special{pa 3850 4200}%
\special{fp}%
%
\special{pn 4}%
\special{pa 1750 4400}%
\special{pa 3850 4400}%
\special{fp}%
%
\special{pn 4}%
\special{pa 3450 2850}%
\special{pa 3450 4790}%
\special{fp}%
\put(12.3000,-48.0000){\makebox(0,0)[lb]{$g_J=\left( \begin{array}{@{\,}cccccccccc@{\,}} & & \hspace{50pt} & & & & & & & \\ & \ & & & & & & & & \\ \ & & & & & & & & & \\ & & & & & & & & & \\ & & & & & & & & & \\ & & & & & & & & & \\ & & & & & & & & & \\ & & & & & & & & & \\ & & & & & & & & & \\ & & & & & & & & & \end{array} \right)$}}%
\put(40.3000,-39.3000){\makebox(0,0)[lb]{$\in \text{GL}_{10}(\C)$}}%
\end{picture}}%
\end{align*}
(cf.\ Example~\ref{eg: wJ}).}
\end{example}

\vspace{10pt}

Let $B_n\subseteq \text{GL}_n(\C)$ be the Borel subgroup consisting of upper triangular matrices.
We then have the standard identification $Fl_n=\text{GL}_n(\C)/B_n$ as is well-known. For $g\in\text{GL}_n(\C)$, we write $[g]=gB_n\in \text{GL}_n(\C)/B_n$ for simplicity.
It is clear that the map \eqref{eq: emb onto image 2} induces an embedding
\begin{align}\label{eq: emb onto image 3}
\Emb{J} \colon  \prod_{k=1}^{m} Fl_{n_k} \rightarrow Fl_n
\quad ; \quad 
([g_1],\cdots,[g_m]) \mapsto [g_J].
\end{align}
We now show that the image of $\Emb{J}$ coincides with the Schubert variety $X_{w_J}$.
For simplicity, we identify the permutation $w_J$ and the element of $Fl_n$ represented by its permutation matrix (see Example~\ref{eg: wJ}).
Under this identification, it is straightforward to see that the embedding $\Emb{J}$ sends $(w_0^{(J_1)},w_0^{(J_2)},\ldots,w_0^{(J_m)} )\in \prod_{k=1}^{m} Fl_{n_k}$ to $w_J\in Fl_n$. This means that the image of $\Emb{J}$ contains $w_J$.
It also follows from the definition that the image of $\Emb{J}$ is stable under the action of $B_n(\subseteq \text{GL}_n(\C))$, where $B_n$ acts on $Fl_n=\text{GL}_n(\C)/B$ by restricting the left multiplication of $\text{GL}_n(\C)$ on $\text{GL}_n(\C)/B$. Therefore, the image of $\Emb{J}$ is a $B$-stable (Zariski-)closed subset of $Fl_n$ containing $w_J$. This means that $X_{w_J}\subseteq \image \Emb{J}$. Since the product $\prod_{k=1}^{m} Fl_{n_k}$ is irreducible, so is the image of $\Emb{J}$. 
We also know that the dimensions of $X_{w_J}$ and $\image \Emb{J}$ coincide since
\begin{align*}
\dim_{\C} X_{w_J} = \ell(w_J) = \sum_{k=1}^m \ell(w_0^{(J_k)}) 
= \dim_{\C} \left( \prod_{k=1}^{m} Fl_{n_k} \right)= \dim_{\C} \image \Emb{J}, 
\end{align*}
where $w_0^{(J_k)}$ is the permutation appeared in \eqref{eq: def of wJ}.
Hence, we conclude that 
\begin{align*}
 X_{w_J} = \image \Emb{J} .
\end{align*}
Therefore, we verified that the map \eqref{eq: emb onto image 3} is an embedding onto the Schubert variety $X_{w_J}$. 
This gives us the isomorphism in \eqref{eq: decomp of XwJ}.

\vspace{10pt}

\subsection{Varieties associated with $J$}
For each $J\subseteq[n-1]$, we introduce varieties $Fl_{J}$, $\Pet{J}$, $\Perm{J}$ associated with $J$ in what follows.
First, we set
\begin{align}\label{eq: definition of FlJ}
 Fl_{J} \coloneqq X_{w_J} \cong \prod_{k=1}^m Fl_{n_k},
\end{align}
where the last isomorphism is given by \eqref{eq: decomp of XwJ}.

\begin{example}\label{eg: decomp of FlJ}
{\rm
Let $n=10$ and $J=\{1,2,4,5,6,9\}=\{1,2\}\sqcup\{4,5,6\}\sqcup\{9\}$ as above. Then we have 
\begin{align*}
 &Fl_{J} \cong Fl_{3}\times Fl_{4}\times Fl_{2}
\end{align*}
(cf.\ Example~\ref{eg: matrix embedding 1}).}
\end{example}

Recall that $h_J\colon [n]\rightarrow[n]$ is the Hessenberg function defined in \eqref{eq: def of hJ}.
Associated with $h_J$, we consider two varieties $\Hess(N,h_J)$ and $\Hess(S,h_J)$, where we note that $\Hess(S,h_J)$ is not connected when $J\neq [n-1]$ (\cite[Corollary~9]{De Mari-Procesi-Shayman} or \cite[Lemma~3.12]{Teff11}).
It is clear that the identity flag 
\begin{align*}
 \langle e_1 \rangle \subset \langle e_1,e_2 \rangle \subset \cdots \subset \langle e_1,e_2,\ldots,e_n \rangle = \C^n
\end{align*}
belongs to $\Hess(S,h_J)$ by definition.
We denote by $\Hess^*(S,h_J)$ the connected component of $\Hess(S,h_J)$ containing the identity flag. 
We set
\begin{align*}
 &\Pet{J} \coloneqq 
 \Hess(N,h_J) \subseteq Fl_n, \\
 &\Perm{J} \coloneqq 
 \Hess^*(S,h_J) \subseteq Fl_n.
\end{align*}
Recalling that $\Pet{n}=\Hess(N,h_2)$ and $\Perm{n}=\Hess(S,h_2)$ from \eqref{eq: def of Pet} and \eqref{eq: def of Perm}, 
it follows that 
\begin{align*}
\Pet{J}\subseteq \Pet{n} \quad \text{and} \quad \Perm{J}\subseteq \Perm{n}
\end{align*}
by \eqref{eq: ineq for hJ}.

\begin{lemma}\label{lem: property of PetJ and PermJ}
For $J\subseteq[n-1]$, the following hold.
\begin{itemize}
\item[(i)] $\Pet{J}$ and $\Perm{J}$ are irreducible.
\item[(ii)] $\dim_{\C}\Pet{J}=\dim_{\C}\Perm{J}=|J|$.
\end{itemize}
\end{lemma}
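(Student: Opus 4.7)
The plan is to establish the stronger assertion that for each $J \subseteq [n-1]$ with connected-component decomposition $J = J_1 \sqcup \cdots \sqcup J_m$ there are isomorphisms
\begin{align*}
 \Pet{J} \cong \prod_{k=1}^{m} \Pet{n_k}, \qquad \Perm{J} \cong \prod_{k=1}^{m} \Perm{n_k}.
\end{align*}
From such a decomposition (i) and (ii) follow at once: each $\Pet{n_k}$ is irreducible of complex dimension $n_k - 1 = |J_k|$ by \eqref{eq: dim of Pet} together with the known irreducibility of Peterson varieties, and each $\Perm{n_k}$ is irreducible of the same dimension as a smooth projective toric variety.

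The first step is to pin down $V_j$ at the indices lying outside the blocks $\overline{J_k}$. Whenever $j \in [n-1] \setminus J$ we have $h_J(j) = j$, so a flag in $\Pet{J}$ (resp.\ $\Perm{J}$) must satisfy $NV_j \subseteq V_j$ (resp.\ $SV_j \subseteq V_j$). Since $N$ is regular nilpotent in Jordan form, its unique $j$-dimensional invariant subspace is $\langle e_1, \ldots, e_j \rangle = \ker N^j$, which is therefore forced. For the diagonal regular semisimple $S$ with distinct eigenvalues, the $j$-dimensional $S$-invariant subspaces form a finite discrete set; the map from $\Hess(S, h_J)$ to the product of these discrete sets recording $(V_j)_{j \notin J}$ is therefore locally constant, and the connected component $\Perm{J}$ of the identity flag consequently lies in the same locus $\{V_j = \langle e_1, \ldots, e_j\rangle \text{ for every } j \in [n-1] \setminus J\}$.

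The second step is to identify this common locus with the image of $\Emb{J}\colon \prod_{k=1}^m Fl_{n_k} \hookrightarrow Fl_n$: by its block-diagonal construction, $\Emb{J}$ is precisely the parametrization of flags in which the subspaces at the indices outside $J$ are the prescribed coordinate subspaces. Pulling $\Pet{J}$ and $\Perm{J}$ back through $\Emb{J}$ decouples the remaining conditions along the blocks: for $j \in J_k = \{a_k, \ldots, b_k\}$, the condition $NV_j \subseteq V_{j+1}$ (resp.\ $SV_j \subseteq V_{j+1}$) only involves subspaces indexed by $\overline{J_k}$, since $V_{a_k-1}$ and $V_{b_k+1}$ are the forced coordinate subspaces separating distinct blocks. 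On the $n_k$-dimensional quotient $W_k \coloneqq V_{b_k+1}/V_{a_k-1}$, the operators $N$ and $S$ descend to a regular nilpotent $\bar N$ (still in Jordan form) and a regular semisimple $\bar S$ with distinct eigenvalues $\lambda_{a_k}, \ldots, \lambda_{b_k+1}$, and the block-$k$ conditions translate precisely into the Peterson, resp.\ permutohedral, defining conditions on the induced flag of $W_k$. This yields the product decompositions displayed above.

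Finally, since each $\Perm{n_k}$ is connected, the product $\prod_k \Perm{n_k}$ is connected, confirming that the locus we have singled out really equals (not merely contains) the connected component $\Perm{J}$ of the identity flag. The point that will require the most care is step two: one must verify that $\bar N$ really is regular nilpotent of the expected rank on $W_k$, and that the Peterson/permutohedral conditions indexed by different $J_k$ do not interact through the shared coordinate subspaces $V_{a_k-1}$, $V_{b_k+1}$ that separate the blocks.
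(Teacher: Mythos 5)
Your proof is correct, but it takes a genuinely different route from the paper's. The paper proves the lemma by citing known results: irreducibility of $\Pet{J}=\Hess(N,h_J)$ is taken from \cite[Sect.~7]{an-ty}, irreducibility of $\Perm{J}$ follows from smoothness and connectedness, and the common dimension $\sum_j(h_J(j)-j)=|J|$ comes from \cite[Theorem~10.2]{so-ty} and \cite[Theorem~8]{De Mari-Procesi-Shayman}. The paper then \emph{uses} this lemma, later in Section~3.2, to establish the product decompositions $\Pet{J}\cong\prod_k\Pet{n_k}$ and $\Perm{J}\cong\prod_k\Perm{n_k}$ via a dimension count (containment of irreducible varieties of equal dimension forces equality). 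You invert this logic: you prove the product decompositions first, by the direct "forcing" argument that $NV_j\subseteq V_j$ (resp.\ $SV_j\subseteq V_j$ on the connected component of the identity) pins $V_j$ to the coordinate subspace $\langle e_1,\dots,e_j\rangle$ for every $j\in[n-1]\setminus J$, and that the remaining conditions decouple block by block on $W_k=V_{b_k+1}/V_{a_k-1}$ because those coordinate subspaces are invariant under $N$ and $S$. From the decompositions, irreducibility and dimension then follow from the corresponding facts about $\Pet{n_k}$ and $\Perm{n_k}$. What each approach buys: the paper's version is shorter at the level of the lemma itself (it outsources the work to citations), but still needs the subsequent dimension-and-irreducibility argument to reach the decompositions; yours is more self-contained and constructive, proves the stronger statement (the paper's \eqref{eq: decomposition into products}) without the intermediate Lemma~\ref{lem: property of PetJ and PermJ}, and only requires irreducibility of the Peterson variety $\Pet{m}$ for a single Jordan block rather than of $\Hess(N,h_J)$ for general $J$. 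The two routine points you flag at the end — that $\bar N$ on $W_k$ is again regular nilpotent in Jordan form, and that conditions in different blocks do not interact because they are separated by $N$- and $S$-invariant coordinate subspaces — do check out and are the right things to verify.
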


\begin{proof}
The irreducibility of $\Pet{J}(=\Hess(N,h_J))$ follows from \cite[Sect.\ 7]{an-ty}.
For $\Perm{J}$ $(=\Hess^*(S,h_J))$, it is non-singular (see section~\ref{subsect: permutohedral}) and connected so that it is irreducible. This proves the claim (i).

For the claim (ii), we have
\begin{align*}
\dim_{\C}\Pet{J}=\sum_{j=1}^n(h_J(j)-j)=\dim_{\C}\Perm{J}
\end{align*}
by \cite[Theorem~10.2]{so-ty} and \cite[Theorem~8]{De Mari-Procesi-Shayman} (see also \cite[Sect.~7]{an-ty}). It is clear that this value is equal to $|J|$ by the definition of $h_J$.
\end{proof}

We now use the embedding $\Emb{J}\colon  \prod_{k=1}^{m} Fl_{n_k}\rightarrow Fl_n$ given in \eqref{eq: emb onto image 3} to study the structure of $\Pet{J}$ and $\Perm{J}$ for $J\subseteq[n-1]$.
We begin with considering the image of $\prod_{k=1}^{m} \Pet{n_k}$ under $\Emb{J}$.
It follows from the construction of $\Emb{J}$ that an arbitrary element $V_{\bullet}\in \Emb{J}\left( \prod_{k=1}^{m} \Pet{n_k} \right)$ satisfies
\begin{align*}
NV_{j} \subseteq V_{h_J(j)} \qquad (1\le j\le n)
\end{align*}
(see also Example~\ref{eg: matrix embedding 1}). Namely, we have
\begin{align*}
\Emb{J}\left( \prod_{k=1}^{m} \Pet{n_k} \right) \subseteq \Hess(N,h_J) = \Pet{J}.
\end{align*}
Here, we know that $\dim_{\C} \Emb{J}\left( \prod_{k=1}^{m} \Pet{n_k} \right)$ is equal to $\dim_{\C} \Pet{J}$ since
\begin{align*}
\dim_{\C} \Emb{J}\left( \prod_{k=1}^{m} \Pet{n_k} \right) 
= \sum_{k=1}^m (n_k -1)
= \sum_{i=1}^n (h_J(i)-i)
= \dim_{\C} \Pet{J}
\end{align*}
by \eqref{eq: dim of Pet} and \eqref{eq: def of hJ}.
Since $\Pet{J}$ is irreducible, we obtain
\begin{align}\label{eq: pet product image}
\Emb{J}\left( \prod_{k=1}^{m} \Pet{n_k} \right) = \Pet{J}.
\end{align}
To obtain a similar result for $\Perm{J}$, recall that $\Perm{J}=\Hess^*(S,h_J)$ is the connected component of $\Hess(S,h_J)$ containing the identity flag.
We also recall that $\Perm{J}$ is irreducible from Lemma~\ref{lem: property of PetJ and PermJ}.
Also, it is clear that the image $\Emb{J}\left( \prod_{k=1}^{m} \Perm{n_k} \right)$ contains the identity flag in $Fl_n$.
Thus, by an argument similar to that above, we obtain
\begin{align}\label{eq: perm product image}
\Emb{J}\left( \prod_{k=1}^{m} \Perm{n_k} \right) = \Perm{J}.
\end{align}
Since the image of $\Emb{J}$ is $Fl_J(=X_{w_J})$, the equalities \eqref{eq: pet product image} and \eqref{eq: perm product image} imply the following claim.

\begin{lemma}\label{lem: PetJ and PermJ in FlJ}
For $J\subseteq[n-1]$, both of $\Pet{J}$ and $\Perm{J}$ are contained in $Fl_J$.
\end{lemma}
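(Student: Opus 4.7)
The plan is to observe that the lemma is an immediate consequence of the two identities \eqref{eq: pet product image} and \eqref{eq: perm product image} that were just established, combined with the identification $Fl_J = X_{w_J} = \image \Emb{J}$ from \eqref{eq: definition of FlJ} and the discussion preceding it.

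More concretely, I would argue as follows. By \eqref{eq: pet product image}, every point of $\Pet{J}$ lies in the image of $\Emb{J}\colon \prod_{k=1}^m Fl_{n_k} \to Fl_n$. By the analysis in the previous subsection, this image is exactly the Schubert variety $X_{w_J}$, which by \eqref{eq: definition of FlJ} is precisely $Fl_J$. Hence $\Pet{J} \subseteq Fl_J$. The same chain of inclusions with \eqref{eq: perm product image} in place of \eqref{eq: pet product image} gives $\Perm{J} \subseteq Fl_J$, so the two containments follow in one stroke.

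There is no real obstacle here: all of the geometric work (identifying $X_{w_J}$ with a product of smaller flag varieties, and recognizing $\Pet{J}$ and $\Perm{J}$ as the $\Emb{J}$-images of the corresponding products of Peterson and permutohedral varieties) has already been carried out. The only thing to verify is that reading \eqref{eq: pet product image} and \eqref{eq: perm product image} together with the definition of $Fl_J$ indeed yields the stated inclusions, which is a formal consequence. Thus the lemma's proof should be a short one or two lines that simply cites \eqref{eq: pet product image}, \eqref{eq: perm product image}, and \eqref{eq: definition of FlJ}.
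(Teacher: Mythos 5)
Your proposal is correct and matches the paper exactly: the lemma is recorded as an immediate consequence of \eqref{eq: pet product image} and \eqref{eq: perm product image} together with the fact that $\image\Emb{J} = X_{w_J} = Fl_J$. No further argument is needed.
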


\vspace{10pt}

Since $\Emb{J}$ is an embedding, the equalities \eqref{eq: pet product image} and \eqref{eq: perm product image} also imply the following decompositions into products (cf. \cite[Theorem 4.5]{Dre1} and \cite[Proposition 3.13]{Teff11}):
\begin{align}\label{eq: decomposition into products}
 \Pet{J} \cong \prod_{k=1}^{m} \Pet{n_k} \quad \text{and} \quad
 \Perm{J} \cong \prod_{k=1}^{m} \Perm{n_k}.
\end{align}
It is clear from the construction that these decompositions are compatible with the one in \eqref{eq: definition of FlJ}.

\begin{example}
{\rm
If $n=10$ and $J=\{1,2,4,5,6,9\}=\{1,2\}\sqcup\{4,5,6\}\sqcup\{9\}$ as above, then we have 
\begin{align*}
 &\Pet{J} \cong \Pet{3}\times \Pet{4}\times \Pet{2}, \\ 
 &\Perm{J} \cong \Perm{3}\times \Perm{4}\times \Perm{2}
\end{align*}
which are compatible with the decomposition of $Fl_J$ given in Example~\ref{eg: decomp of FlJ}.}
\end{example}

\vspace{20pt}
The following is a direct implication of Proposition~\ref{prop: known things 4}.

\begin{lemma}\label{lem: degeneration}
For $J\subseteq [n-1]$, we have
\begin{align*}
 [\Pet{J}]=[\Perm{J}] \quad \text{in $H_*(Fl_n;\Z)$},
\end{align*}
where $[\Pet{J}]$ and $[\Perm{J}]$ are the cycles representing the subvarieties $\Pet{J}$ and $\Perm{J}$ in $Fl_{n}$, respectively.
\end{lemma}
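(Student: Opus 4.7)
The plan is to bootstrap from Proposition~\ref{prop: known things 4}, which gives $[\Pet{n_k}] = [\Perm{n_k}]$ in $H_*(Fl_{n_k};\Z)$ for every $k$, by exploiting the product decompositions \eqref{eq: decomposition into products} realized inside $Fl_J = \image\Emb{J}$. Since each connected component $J_k$ is non-empty, $n_k = |J_k|+1 \ge 2$, so the hypothesis of that proposition is met for every factor.

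First I would promote the factor-wise equality to an equality in $H_*\bigl(\prod_{k=1}^m Fl_{n_k};\Z\bigr)$ via the Künneth formula. Because $H_*(Fl_{n_k};\Z)$ is torsion-free, Künneth identifies this homology group with $\bigotimes_k H_*(Fl_{n_k};\Z)$, and under this identification the fundamental class of an irreducible product subvariety corresponds to the external product of the fundamental classes of its factors. Applied to the irreducible products $\prod_k \Pet{n_k}$ and $\prod_k \Perm{n_k}$ (irreducibility of each factor follows from Lemma~\ref{lem: property of PetJ and PermJ}), the factor-wise equalities yield
\[
 \left[\prod_{k=1}^m \Pet{n_k}\right] = \left[\prod_{k=1}^m \Perm{n_k}\right] \quad \text{in } H_*\Bigl(\prod_{k=1}^m Fl_{n_k};\Z\Bigr).
\]

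Next I would push this equality forward along the closed embedding $\Emb{J}$ of \eqref{eq: emb onto image 3}. Since $Fl_n$ is compact the map $\Emb{J}$ is proper, and by \eqref{eq: pet product image}--\eqref{eq: perm product image} it sends $\prod_k \Pet{n_k}$ (respectively $\prod_k \Perm{n_k}$) isomorphically onto $\Pet{J}$ (respectively $\Perm{J}$). Consequently $(\Emb{J})_*$ carries the two cycle classes displayed above to $[\Pet{J}]$ and $[\Perm{J}]$ in $H_*(Fl_n;\Z)$, finishing the argument.

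I do not anticipate any genuine obstacle: the reasoning is formal once the product decomposition is in place. The only point worth flagging carefully is the compatibility of fundamental classes with the Künneth isomorphism — a standard fact, but one that relies on torsion-freeness of the integral homology of the ambient flag varieties, which is essential for carrying out the argument with $\Z$ coefficients rather than $\Q$ or $\C$.
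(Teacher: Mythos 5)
Your argument is correct and matches the paper's own proof step for step: both decompose inside $Fl_J\cong\prod_k Fl_{n_k}$, invoke the compatibility of fundamental classes with the homology cross product (the paper cites \cite[Example~1.10.2]{Fulton} for the statement you attribute to K\"unneth plus torsion-freeness), apply Proposition~\ref{prop: known things 4} factor-wise, and push forward along $\Emb{J}$ using \eqref{eq: pet product image}--\eqref{eq: perm product image}. The only cosmetic difference is that the paper factors the composite through the intermediate identification $H_*(Fl_J;\Z)$, whereas you go directly to $H_*(Fl_n;\Z)$.
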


\begin{proof}
Since $\Pet{J}$ and $\Perm{J}$ are both subvariety of $Fl_J(\subseteq Fl_n)$ by Lemma~\ref{lem: PetJ and PermJ in FlJ}, it suffices to prove the equality in $H^*(Fl_J;\Z)$.
The decomposition $Fl_J \cong \prod_{k=1}^m Fl_{n_k}$ given in \eqref{eq: definition of FlJ} induces an isomorphism
\begin{align*}
 \xi\colon  H_*(\textstyle{\prod_{k=1}^{m}} Fl_{n_k};\Z) \cong H_*(Fl_{J};\Z).
\end{align*}
By \cite[Example 1.10.2]{Fulton}, we also have an isomorphism 
\begin{align*}
 \PM\colon  \bigotimes_{k=1}^{m} H_*(Fl_{n_k};\Z) \stackrel{\cong}{\rightarrow}H_*(\textstyle{\prod_{k=1}^{m}} Fl_{n_k};\Z) 
\end{align*}
such that $\PM(\otimes_{k=1}^{m} [V_{k}])=[\prod_{k=1}^{m} V_{k}]$ for irreducible subvarieties $V_k\subseteq Fl_{n_k}$. 
By composing these two isomorphisms, we have
\begin{equation}\label{eq: cycle J}
\begin{split}
 &\xi\circ \PM(\otimes_{k=1}^{m} [\Pet{n_k}])=\xi([\textstyle{\prod_{k=1}^{m} \Pet{n_k}}])=[\Pet{J}], \\
 &\xi\circ \PM(\otimes_{k=1}^{m} [\Perm{n_k}])=\xi([\textstyle{\prod_{k=1}^{m} \Perm{n_k}}])=[\Perm{J}] 
\end{split}
\end{equation}
since the isomorphisms in \eqref{eq: decomposition into products} are compatible with the isomorphism $Fl_J \cong \prod_{k=1}^m Fl_{n_k}$.
By Proposition~\ref{prop: known things 4}, we have the following equalities:
\begin{align*}
 [\Pet{n_k}]=[\Perm{n_k}] \quad \text{in $H_*(Fl_{n_k};\Z)$} \qquad (1\le k\le m).
\end{align*}
Therefore, \eqref{eq: cycle J} implies that 
\begin{align*}
 [\Pet{J}]=[\Perm{J}] \quad \text{in $H_*(Fl_{J};\Z)$}.
\end{align*}
\end{proof}

\vspace{10pt}

\subsection{A proof of Theorem \ref{thm: A}}\label{subsec: Proof of Main Thm}

Let 
\begin{align*}
 i \colon  \Pet{n}\hookrightarrow Fl_n, \quad \text{and} \quad
 j \colon  \Perm{n}\hookrightarrow Fl_n
\end{align*}
be the inclusion maps. 
We recall the following claim from \cite{Insko}.

\begin{proposition}\label{prop: known things 1}
$($\cite[Theorem 17]{Insko}$)$
The induced map $i_*\colon H_*(\Pet{n};\Z)\rightarrow H_*(Fl_n;\Z)$ is an injective map whose image is a direct summand of $H_*(Fl_n;\Z)$. 
\end{proposition}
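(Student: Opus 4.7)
The plan is to use an affine paving of $\Pet{n}$ compatible with the Bruhat decomposition of $Fl_n$, and to verify both claims via a unitriangular intersection pairing with opposite Schubert classes.

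First, I would invoke an affine paving of $\Pet{n}$ into cells $\{C_J\}_{J \subseteq [n-1]}$ with $C_J \cong \C^{|J|}$, obtained by intersecting $\Pet{n}$ with the Bruhat cells of $Fl_n$ (cf.\ Tymoczko, or Precup's general construction for regular Hessenberg varieties). Together with Proposition~\ref{prop: known things 0 Pet} (torsion-freeness and vanishing of odd homology), the paving lemma produces a $\Z$-basis $\{[\overline{C_J}]\}_{J \subseteq [n-1]}$ of $H_*(\Pet{n};\Z)$ of the correct rank $2^{n-1}$.

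Next, for each subset $J$, I would identify an opposite Schubert variety $X^{u_J} \subseteq Fl_n$ of complementary complex dimension $\dim_{\C} Fl_n - |J|$ such that the scheme-theoretic intersection $\overline{C_J} \cap X^{u_J}$ consists of a single reduced point, while for $J' \neq J$ with $|J'|=|J|$ the intersection numbers $[\overline{C_{J'}}] \cdot [X^{u_J}]$ vanish under an appropriate partial order on subsets (for example, inclusion order, or an ordering of the associated permutations in Bruhat order). This produces a $2^{n-1} \times 2^{n-1}$ unitriangular pairing matrix $\bigl([\overline{C_{J'}}] \cdot [X^{u_J}]\bigr)_{J,J'}$.

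From this unitriangularity the pushforwards $\{i_*[\overline{C_J}]\}_J$ are linearly independent in $H_*(Fl_n;\Z)$, which gives injectivity of $i_*$. For the direct summand property, one performs a compatible basis change on the Schubert basis of $H_*(Fl_n;\Z)$: replacing each Schubert class Poincar\'e dual to $[X^{u_J}]$ by the image $i_*[\overline{C_J}]$ produces a new $\Z$-basis of $H_*(Fl_n;\Z)$ containing $\{i_*[\overline{C_J}]\}_J$ as a sub-basis, which witnesses that the image $i_*H_*(\Pet{n};\Z)$ is a direct summand.

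The main technical obstacle is the combinatorial/geometric identification of the opposite Schubert varieties $X^{u_J}$ realizing the desired single-point transverse intersection with $\overline{C_J}$ together with the triangularity condition across different $J$. This requires direct computation with the matrix equations defining $\Pet{n}$ and explicit parametrizations of the cells $C_J$, and is the technical content of \cite[Theorem 17]{Insko}.
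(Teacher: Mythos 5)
The paper does not reprove this statement; it simply cites \cite[Theorem 17]{Insko}. Your sketch faithfully outlines Insko's argument --- an affine paving of $\Pet{n}$ by cells indexed by $J\subseteq[n-1]$, a unitriangular intersection pairing of the closures $\overline{C_J}$ against suitably chosen opposite Schubert classes, and the resulting basis-extension in $H_*(Fl_n;\Z)$ giving both injectivity of $i_*$ and the direct-summand property --- so it is in agreement with the cited proof.
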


Recall from Proposition~\ref{prop: known things 0 Pet} that $H_*(\Pet{n};\Z)$ and $H^*(Fl_n;\Z)$ are torsion free. Thus, the restriction map $i^* \colon  H^*(Fl_n;\Z) \rightarrow  H^*(\Pet{n};\Z)$ on the cohomology groups is the dual map of $i_*$ in Proposition~\ref{prop: known things 1}.

\begin{corollary}\label{corollary: known things 3}
$($\cite{Insko}$)$
The restriction map $i^* \colon  H^*(Fl_n;\Z) \rightarrow  H^*(\Pet{n};\Z)$ is surjective.
\end{corollary}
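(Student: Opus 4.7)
The plan is to deduce this from Proposition~\ref{prop: known things 1} by a pure Universal-Coefficient/duality argument. Since the substantive content (injectivity with direct-summand image) is already in hand, what remains is essentially a bookkeeping step.

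First, I would note that both $H_*(\Pet{n};\Z)$ and $H_*(Fl_n;\Z)$ are finitely generated and torsion free: for $\Pet{n}$ this is Proposition~\ref{prop: known things 0 Pet}, and for $Fl_n$ it is classical (the flag variety has a CW decomposition with only even-dimensional cells given by Schubert cells). The Universal Coefficient Theorem then yields natural isomorphisms
\begin{align*}
 H^*(\Pet{n};\Z) \;\cong\; \Hom_{\Z}(H_*(\Pet{n};\Z),\Z), \qquad
 H^*(Fl_n;\Z) \;\cong\; \Hom_{\Z}(H_*(Fl_n;\Z),\Z),
\end{align*}
since the $\mathrm{Ext}$ terms vanish.

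Next, under these identifications the pullback $i^*$ is identified with the $\Z$-linear dual $\Hom_{\Z}(i_*,\Z)$ of the pushforward $i_*$. This is exactly the statement already made in the paragraph preceding the corollary, and it is immediate from the naturality of the Universal Coefficient isomorphism.

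Finally, I would invoke Proposition~\ref{prop: known things 1}: $i_*$ is an injection of finitely generated free $\Z$-modules whose image is a direct summand. Choosing a complementary direct summand, we may write $H_*(Fl_n;\Z) = i_*\bigl(H_*(\Pet{n};\Z)\bigr) \oplus C$ for some free $\Z$-submodule $C$. Applying $\Hom_{\Z}(-,\Z)$ turns the split inclusion $i_*$ into a split surjection onto $\Hom_{\Z}(H_*(\Pet{n};\Z),\Z)$, which is $H^*(\Pet{n};\Z)$ by the previous step. Therefore $i^*$ is surjective.

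There is no real obstacle here; the work has been done in Proposition~\ref{prop: known things 1}. The only thing to be careful about is that the direct-summand conclusion (not just injectivity) is genuinely needed: mere injectivity of $i_*$ over $\Z$ would not suffice to guarantee surjectivity of the dual map in general, and it is precisely the splitting provided by Insko's theorem that makes the dual surjective on the nose.
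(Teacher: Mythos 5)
Your argument is exactly the one the paper intends: the sentence immediately preceding the corollary identifies $i^*$ with the $\Z$-dual of $i_*$ via the Universal Coefficient Theorem (using torsion-freeness), and the corollary then follows because dualizing the split injection of Proposition~\ref{prop: known things 1} yields a split surjection. Your remark that the direct-summand hypothesis (not mere injectivity) is what makes the dual surjective is the correct point of care, and matches the role Insko's theorem plays here.
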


\vspace{5pt}

We now prove the following which gives us Theorem~\ref{thm: A} in Section~\ref{sec: intro}.

\begin{theorem}\label{thm: integral isomorphism}
There exists a unique isomorphism
\begin{align*}
\varphi\colon  H^*(\Pet{n};\Z) \rightarrow  H^*(\Perm{n};\Z)^{\Sn}
\end{align*} 
as graded rings such that the following diagram commutes.
\begin{center}
\begin{picture}(160,65)
   \put(50,50){$H^*(Fl_n;\Z)$}
   \put(40,32){\rotatebox[origin=c]{-135}{$\overrightarrow{\qquad\ }$}}
   \put(40,33){$\footnotesize{\text{$i^*$}}$}
   \put(120,33){$\footnotesize{\text{$j^*$}}$}
   \put(100,31){\rotatebox[origin=c]{-45}{$\overrightarrow{\qquad\ }$}}
   \put(0,10){$H^*(\Pet{n};\Z)$}
   \put(67,10){\rotatebox[origin=c]{0}{$\overrightarrow{\qquad\ }$}}
   \put(78,15){$\footnotesize{\text{$\cong$}}$}
   \put(78,5){$\footnotesize{\text{$\varphi$}}$}
   \put(100,10){$H^*(\Perm{n};\Z)^{\Sn}$}
\end{picture} 
\vspace{-5pt}
\end{center}
\end{theorem}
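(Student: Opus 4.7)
The plan is to define $\varphi$ by lifting classes through $i^*$ and then pushing forward by $j^*$, reducing the problem to an equality of kernels of these two restriction maps. By Corollary~\ref{corollary: known things 3}, $i^*$ is surjective, so for each $\beta\in H^*(\Pet{n};\Z)$ one can choose a lift $\alpha\in H^*(Fl_n;\Z)$ with $i^*\alpha=\beta$. The proposal is to set $\varphi(\beta):=j^*\alpha$; if this is independent of the choice of lift, it is automatically a graded ring homomorphism making the triangle commute, and its image lies in the invariant submodule by Proposition~\ref{prop: known things 2}(i). Uniqueness is forced by the surjectivity of $i^*$.

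Well-definedness of $\varphi$ amounts to $\ker(i^*)\subseteq\ker(j^*)$, and injectivity of the resulting $\varphi$ amounts to $\ker(j^*)\subseteq\ker(i^*)$; so the whole question becomes $\ker(i^*)=\ker(j^*)$. Since $H_*(\Pet{n};\Z)$ and $H_*(\Perm{n};\Z)$ are torsion-free (Propositions~\ref{prop: known things 0 Pet} and~\ref{prop: known things 0 Perm}), Kronecker duality identifies
\begin{align*}
\ker(i^*)=\mathrm{ann}\bigl(i_*H_*(\Pet{n};\Z)\bigr), \qquad \ker(j^*)=\mathrm{ann}\bigl(j_*H_*(\Perm{n};\Z)\bigr)
\end{align*}
inside $H^*(Fl_n;\Z)$. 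Because $i_*H_*(\Pet{n};\Z)$ is a direct summand of $H_*(Fl_n;\Z)$ by Proposition~\ref{prop: known things 1}, the desired equality of kernels is equivalent to the equality of the saturations of $i_*H_*(\Pet{n};\Z)$ and $j_*H_*(\Perm{n};\Z)$ in $H_*(Fl_n;\Z)$.

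For this equality of submodules, the straightforward inclusion is $i_*H_*(\Pet{n};\Z)\subseteq j_*H_*(\Perm{n};\Z)$. The classes $\{[\Pet{J}]\}_{J\subseteq[n-1]}$ are $2^{n-1}$ in number, concentrated in degree $2|J|$, and together they generate $i_*H_*(\Pet{n};\Z)$ since its rank is $2^{n-1}$ by Proposition~\ref{prop: known things 0 Pet}; by Lemma~\ref{lem: degeneration} each equals $[\Perm{J}]\in j_*H_*(\Perm{n};\Z)$ because $\Perm{J}\subseteq\Perm{n}$. The reverse inclusion is the crux of the argument: one must show that every cycle class in $j_*H_*(\Perm{n};\Z)$ already lies in $i_*H_*(\Pet{n};\Z)$. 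Since $\Perm{n}$ is a toric variety, its homology is spanned by closures of $T$-orbits, each of which is (up to a Weyl-group translate) one of the $\Perm{J}$; the plan is to show every such translate pushes forward to an integral combination of the $[\Pet{J}]$'s, via a degeneration argument in the spirit of Proposition~\ref{prop: known things 4}.

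Once $\ker(i^*)=\ker(j^*)$ is in hand, $\varphi$ is a well-defined, injective, graded ring homomorphism, and its image is contained in the saturated submodule $H^*(\Perm{n};\Z)^{\Sn}$ of the same rank $2^{n-1}$ (by Proposition~\ref{prop: known things 2}(ii) together with the rank-count from $H^*(\Pet{n};\Z)$). Surjectivity up to finite index is then automatic; upgrading it to honest equality will simultaneously confirm Remark~\ref{rem: known things 2}. The main obstacle throughout is the reverse inclusion of cycle submodules: the toric structure of $\Perm{n}$ a priori contributes many more cycles (indexed by the $n!\cdot 2^{n-1}$-many faces of the permutohedron up to symmetry) than the $2^{n-1}$-dimensional Peterson image can visibly absorb, and showing that everything collapses down to the $[\Pet{J}]$'s in $H_*(Fl_n;\Z)$ is the technical heart of the proof.
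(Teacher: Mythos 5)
Your framing is sound: reducing the claim to $\ker(i^*)=\ker(j^*)$, and then (via torsion-freeness and Kronecker duality) to an equality of saturated images in $H_*(Fl_n;\Z)$, is exactly the right reformulation, and the forward inclusion $\image i_* \subseteq \image j_*$ via the basis $\{[\Pet{J}]\}$ and Lemma~\ref{lem: degeneration} matches the paper. But you leave the reverse inclusion as an unproven plan, and the plan you sketch points in a much harder direction than is needed. You propose to enumerate $T$-orbit closures of the toric variety $\Perm{n}$ and show that every one of their pushforwards collapses to an integral combination of the $[\Pet{J}]$'s by a degeneration argument; this would require a case analysis over all faces of the permutohedron (and their Weyl translates), and it is not at all clear that a variant of Proposition~\ref{prop: known things 4} gives the needed identities for arbitrary translates. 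The paper avoids all of this with a rank argument: since $j^*$ is the dual of $j_*$ and both homologies are free, $\rank(\image j_*)=\rank(\image j^*)\le \rank H^*(\Perm{n};\Z)^{\Sn}\le 2^{n-1}$ by Proposition~\ref{prop: known things 2}(ii) (which ultimately rests on Stembridge's computation $\dim_{\C} H^*(\Perm{n};\C)^{\Sn}=2^{n-1}$). Combined with $2^{n-1}=\rank(\image i_*)$ and the inclusion $\image i_*\subseteq\image j_*$, the two images have equal rank; and since $\image i_*$ is a direct summand of $H_*(Fl_n;\Z)$ by Proposition~\ref{prop: known things 1}, the quotient is torsion-free, so a rank-zero overmodule must be equal. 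That two-line counting argument is the content you are missing, and it is exactly where the complex/rational result enters the integral proof.

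There is a second, smaller gap you flag but do not close: once $\varphi$ is an injection into $H^*(\Perm{n};\Z)^{\Sn}$ of full rank, you need to rule out a finite cokernel. In the paper this is handled by observing that the dual of a surjection between free $\Z$-modules is a split injection, so $\image\varphi$ is a direct summand of $H^*(\Perm{n};\Z)^{\Sn}$; a rank-zero free quotient is then zero. Without some such splitting (or saturation) argument, ``surjectivity up to finite index'' does not upgrade to surjectivity, and the theorem as stated is not established.
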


\begin{proof}
The uniqueness of $\varphi$ follows from the commutativity of the diagram and the surjectivity of $i^*$ (Corollary~\ref{corollary: known things 3}).
We construct such an isomorphism $\varphi$.

Let us begin with studying the induced maps on the \textit{homology groups}: 
\begin{align*}
&i_* \colon   H_*(\Pet{n};\Z) \rightarrow  H_*(Fl_n;\Z), \\ 
&j_* \colon   H_*(\Perm{n};\Z) \rightarrow  H_*(Fl_n;\Z).
\end{align*}
We first claim that
\begin{align}\label{eq: first claim}
\text{Im}\ i_* = \text{Im}\ j_* \quad \text{in $H_*(Fl_n;\Z)$}.
\end{align}
Let us prove this in the following.
For this purpose, we take a particular basis of $H_*(\Pet{n};\Z)$ as follows.
For each $J\subseteq[n-1]$, we have a cycle $[\Pet{J}]$ in $H_*(\Pet{n};\Z)$, and it is shown in \cite[Proposition~3.4 and Proposition~4.1]{AHKZ} that these cycles form a $\Z$-basis of $H_*(\Pet{n};\Z)$:
\begin{align*}
 H_*(\Pet{n};\Z) = \bigoplus_{J\subseteq[n-1]} \Z [\Pet{J}].
\end{align*}
By Lemma~\ref{lem: degeneration}, we have $i_*[\Pet{J}]=j_*[\Perm{J}]\in \text{Im}\ j_*$ for $J\subseteq[n-1]$.
This implies that 
\begin{align}\label{eq: i* sub j*}
 \text{Im}\ i_* \subseteq \text{Im}\ j_* . 
\end{align}
Let us prove that $\text{Im}\ i_* = \text{Im}\ j_*$. 
Since the map $i_*$ is injective by Proposition~\ref{prop: known things 1}, it follows from \eqref{eq: i* sub j*} and Proposition~\ref{prop: known things 0 Pet} that
\begin{align}\label{eq: first ineq}
 2^{n-1} = \rank (\text{Im}\ i_* )
 \le \rank (\text{Im} \  j_*).
\end{align}
As for $i_*$, the dual map of $j_*$ is precisely the restriction map
\begin{align*}
 j^* \colon  H^*(Fl_n;\Z)\rightarrow H^*(\Perm{n};\Z)
\end{align*}
on the cohomology groups. For this map, we know from Proposition~\ref{prop: known things 2} that 
\begin{align}\label{eq: second ineq}
\rank (\text{Im} \ j^*) \le \rank H^*(\Perm{n};\Z)^{\Sn}\le 2^{n-1}.
\end{align}
Since $j^*$ is the dual map of $j_*$, we have 
\begin{align*}
 \rank (\text{Im} \ j_*)=\dim_{\Q} (\text{Im} \  j_*^{\Q}) = \dim_{\Q} (\text{Im} \  j^*_{\Q})=\rank (\text{Im} \  j^*),
\end{align*}
where the maps $j_*^{\Q}$ and $j^*_{\Q}$ are the homomorphisms $H_*(\Perm{n};\Q)\rightarrow H_*(Fl_n;\Q)$ and $H^*(Fl_n;\Q)\rightarrow H^*(\Perm{n};\Q)$ induced by the inclusion map $j\colon \Perm{n}\rightarrow Fl_n$, respectively.
Thus, the inequalities in \eqref{eq: first ineq} and \eqref{eq: second ineq} must be equalities, and we obtain $\rank (\text{Im} \ i_*)=\rank (\text{Im} \ j_*)$. Hence, by \eqref{eq: i* sub j*} and Proposition~\ref{prop: known things 1}, it follows that $\text{Im}\ i_* = \text{Im}\ j_*$ 
as we claimed in \eqref{eq: first claim}.

Since $i_*\colon H_*(\Pet{n};\Z) \rightarrow H_*(Fl_n;\Z) $ is an isomorphism onto its image, 
\eqref{eq: first claim} means that there exists a surjective group homomorphism 
\begin{align*}
\psi\colon  H_*(\Perm{n};\Z) \rightarrow  H_*(\Pet{n};\Z) 
\end{align*}
which satisfies the following commutative diagram.
\begin{equation*}
\begin{split}
\begin{picture}(160,65)
   \put(50,50){$H_*(Fl_n;\Z)$}
   \put(40,32){\rotatebox[origin=c]{-135}{$\overleftarrow{\qquad\ }$}}
   \put(40,33){$\footnotesize{\text{$i_*$}}$}
   \put(120,33){$\footnotesize{\text{$j_*$}}$}
   \put(100,31){\rotatebox[origin=c]{-45}{$\overleftarrow{\qquad\ }$}}
   \put(0,10){$H_*(\Pet{n};\Z)$}
   \put(67,10){\rotatebox[origin=c]{0}{$\overleftarrow{\qquad\ }$}}
   \put(64,10){\rotatebox[origin=c]{0}{$\overleftarrow{\qquad\ }$}}
   \put(78,3){$\footnotesize{\text{$\psi$}}$}
   \put(100,10){$H_*(\Perm{n};\Z)$}
\end{picture} 
\end{split}
\end{equation*}
Now, we consider the following commutative diagram on the cohomology groups, where we denote by $\psi^*$ the dual map of $\psi$.
\begin{center}
\begin{picture}(160,65)
   \put(50,50){$H^*(Fl_n;\Z)$}
   \put(40,32){\rotatebox[origin=c]{-135}{$\overrightarrow{\qquad\ }$}}
   \put(40,33){$\footnotesize{\text{$i^*$}}$}
   \put(120,33){$\footnotesize{\text{$j^*$}}$}
   \put(100,31){\rotatebox[origin=c]{-45}{$\overrightarrow{\qquad\ }$}}
   \put(0,10){$H^*(\Pet{n};\Z)$}
   \put(67,10){\rotatebox[origin=c]{0}{$\overrightarrow{\qquad\ }$}}
   \put(78,3){$\footnotesize{\text{$\psi^*$}}$}
   \put(100,10){$H^*(\Perm{n};\Z)$}
\end{picture} 
\end{center}
Since $i^*$ is surjective by Corollary~\ref{corollary: known things 3}, we have $\text{Im} \ \psi^*=\text{Im} \ j^*$ by the commutativity of this diagram. 
Also, we know from Proposition~\ref{prop: known things 2}~(i) that $\text{Im} \ j^*\subseteq H^*(\Perm{n};\Z)^{\Sn}$. 
Thus, we obtain the following commutative diagram.
\begin{center}
\begin{picture}(160,65)
   \put(50,50){$H^*(Fl_n;\Z)$}
   \put(40,32){\rotatebox[origin=c]{-135}{$\overrightarrow{\qquad\ }$}}
   \put(40,33){$\footnotesize{\text{$i^*$}}$}
   \put(120,33){$\footnotesize{\text{$j^*$}}$}
   \put(100,31){\rotatebox[origin=c]{-45}{$\overrightarrow{\qquad\ }$}}
   \put(0,10){$H^*(\Pet{n};\Z)$}
   \put(67,10){\rotatebox[origin=c]{0}{$\overrightarrow{\qquad\ }$}}
   \put(78,3){$\footnotesize{\text{$\psi^*$}}$}
   \put(100,10){$H^*(\Perm{n};\Z)^{\Sn}$}
\end{picture} 
\end{center}
Since the map $i^*$ is a surjective ring homomorphism, it is the quotient map by an ideal of $H^*(Fl_n;\Z)$. Hence, it follows that the map $\psi^*$ must be a ring homomorphism by the commutativity of this diagram. Namely, $\psi^*$ is the (graded) ring homomorphism induced by $j^*$.
Since $\psi$ is surjective, the dual map $\psi^*$ is an injective map whose image is a direct summand of $H^*(\Perm{n};\Z)^{\Sn}$. 
Thus, the quotient $H^*(\Perm{n};\Z)^{\Sn}/\text{Im} \ \psi^*$ is a free $\Z$-module, and its rank is less than or equal to $0$ by Proposition~\ref{prop: known things 0 Pet}~(i) and Proposition~\ref{prop: known things 2}~(ii).
Therefore, it follows that $\text{Im} \ \psi^*=H^*(\Perm{n};\Z)^{\Sn}$ so that $\psi^*$ is surjective.
Letting $\varphi\coloneqq \psi^*$, we complete the proof.
\end{proof}

\vspace{10pt}

\begin{remark}\label{rem: on rank inequality}
\textnormal{
Since we proved that $\psi^*$ is an isomorphism, it follows that the equality 
\begin{align*}
\rank H_*(\Perm{n};\Z)^{\Sn} = 2^{n-1}
\end{align*}
holds for the inequality of Proposition~\ref{prop: known things 2}~(ii).
}
\end{remark}

\vspace{10pt}

\begin{remark}\label{rem: on Klyachko}
\textnormal{
The invariant subring $H^*(\Perm{n};\Q)^{\Sn}$ with $\Q$ coefficients was studied by Klyachko (\cite{Klyachko}), and he gave an explicit presentation of $H^*(\Perm{n};\Q)^{\Sn}$ (for arbitrary Lie types). See Nadeau-Tewari (\cite[Sect.~8]{Nadeau-Tewari}) for an exposition of Klyachko's results. One can verify that it coincides with the presentation of $H^*(\Pet{n};\Q)$ given by Fukukawa-Harada-Masuda (\cite{fu-ha-ma}). See \cite{AHMMS,ha-ho-ma} for a generalization to arbitrary Lie types.
}
\end{remark}

\bigskip

\section{An explicit presentation of the ring $H^*(\Pet{n};\Z)$}\label{sec: ring presentation}

The aim of this section is to give an explicit presentation of the ring $H^*(\Pet{n};\Z)$ in terms of ring generators and their relations.

\subsection{A ring presentation of $H^*(Fl_n;\Z)$}\label{subsec: coh of flag}
We review the following well-known presentation of the cohomology ring of the flag variety $Fl_n$. Our main reference is \cite[Sect.\ 10.2]{fult97}. For $1\le i\le n$, let $E_i$ be the tautological vector bundle over $Fl_n$ whose fiber over a point $V_{\bullet}\in Fl_n$ is $V_i$. As a convention, let $E_0$ be the sub-bundle of $E_1$ of rank $0$. Set 
\begin{align*}
 \tau_i \coloneqq c_1((E_i/E_{i-1})^*)
 \in H^2(Fl_n;\Z) 
 \qquad (1\le i\le n),
\end{align*}
where $c_1((E_i/E_{i-1})^*)$ is the first Chern class of the dual line bundle of the tautological line bundle\footnote{The line bundle $L_i$ appeared in Section \ref{sec: intro} is $E_i/E_{i-1}$ for $1\le i\le n$.} $E_i/E_{i-1}$.
By definition, we have short exact sequences
\begin{align*}
 0\rightarrow (E_i/E_{i-1})^* \rightarrow E_i^* \rightarrow E_{i-1}^*\rightarrow0
 \qquad (1\le i\le n).
\end{align*}
From these sequences, it follows that 
\begin{align*}
 c_k(E_n^*) = e_k(\tau_1,\tau_2,\ldots,\tau_n)
 \qquad (1\le k\le n),
\end{align*}
where $e_k(\tau_1,\tau_2,\ldots,\tau_n)$ is the $k$-th elementary symmetric polynomial in $\tau_1,\tau_2,\ldots,\tau_n$.
Since $E_n^*$ is a trivial bundle of rank $n$, this implies that we have 
\begin{align}\label{eq: ei=0 in flag}
 e_k(\tau_1,\tau_2,\ldots,\tau_n) = 0 \quad \text{in $H^*(Fl_n;\Z)$}
 \qquad (1\le k\le n).
\end{align}
Let $\Z[y_1,y_2,\ldots,y_n]$ be the polynomial ring over $\Z$ with indeterminates $y_1,y_2,\ldots,y_n$. 
The ring $H^*(Fl_n;\Z)$ is generated by $\tau_1,\tau_2,\ldots,\tau_n$, and hence we have a surjective ring homomorphism 
\begin{align*}
 \Z[y_1,y_2,\ldots,y_n] \rightarrow H^*(Fl_n;\Z)
\end{align*}
which sends $y_i$ to $\tau_i$ $(1\le i\le n)$.
By \eqref{eq: ei=0 in flag}, this induces a surjective ring homomorphism 
\begin{align}\label{eq: presentation for coho of flag}
 \Z[y_1,y_2,\ldots,y_n]/(e_1(y),e_2(y),\ldots,e_n(y)) \rightarrow H^*(Fl_n;\Z),
\end{align}
where $(e_1(y),e_2(y),\ldots,e_n(y))$ is the ideal of $\Z[y_1,y_2,\ldots,y_n]$ generated by $e_k(y)=e_k(y_1,y_2,\ldots,y_n)$ for $1\le k\le n$.
It is well-known that \eqref{eq: presentation for coho of flag} is an isomorphism.

\subsection{A module basis of $H^*(\Pet{n};\Z)$}\label{subsec: basis of Pet}
We set
\begin{align}\label{eq: def of x i}
 x_i \coloneqq c_1((E_i/E_{i-1})^*|_{\Pet{n}}) \in H^2(\Pet{n};\Z) 
 \qquad (1\le i\le n).
\end{align}
Namely, $x_i$ is the image of $\tau_i$ under the restriction map $i^*\colon H^*(Fl_n;\Z)\rightarrow H^*(\Pet{n};\Z)$ for $1\le i\le n$.
We also set
\begin{align}\label{eq: def of varpi i}
 \varpi_i \coloneqq x_1+x_2+\cdots+x_i \in H^2(\Pet{n};\Z) 
 \qquad (1\le i\le n-1).
\end{align}
For $J\subseteq[n-1]$, we have the decomposition $J=J_1\sqcup\cdots\sqcup J_m$ into the connected components (see section~\ref{subsec: comb on J}), and we set 
\begin{align}\label{eq: def of varpiJ}
 \varpi_J \coloneqq \frac{1}{m_J} \prod_{i\in J} \varpi_i,
\end{align}
where $m_J$ is the positive integer defined by $m_J \coloneqq |J_1|! |J_2|! \cdots |J_m|!$.
This $\varpi_J$ is defined in $H^{2|J|}(\Pet{n};\Q)$, but we have the following theorem.

\begin{theorem}\label{thm: main thm of AHKZ}
$($\cite[Theorem~4.13]{AHKZ}$)$
For each $J\subseteq[n-1]$, the cohomology class $\varpi_J$ is an element of the integral cohomology group $H^{2|J|}(\Pet{n};\Z)$, and the set 
\begin{align*}
\{\varpi_J \in H^{2|J|}(\Pet{n};\Z) \mid J\subseteq[n-1]\}
\end{align*}
is a $\Z$-basis of $H^*(\Pet{n};\Z)$.
\end{theorem}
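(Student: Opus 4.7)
I would prove integrality and the basis property simultaneously by showing that the pairing matrix $(\int_{\Pet{K}}\varpi_J)_{|J|=|K|=d}$ is the identity in every degree $d$. Since $H^*(\Pet{n};\Z)$ is torsion free and $\{[\Pet{K}]\}_{K\subseteq[n-1]}$ is a $\Z$-basis of $H_*(\Pet{n};\Z)$ (by \cite{AHKZ}, as already invoked in the proof of Theorem~\ref{thm: integral isomorphism}), this would force each rational class $\varpi_J\in H^*(\Pet{n};\Q)$ to pair integrally with every basis element of homology, hence be integral, and simultaneously make $\{\varpi_J\}_J$ the dual basis, hence a $\Z$-basis of $H^*(\Pet{n};\Z)$.

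The crucial observation is a vanishing: $\varpi_i|_{\Pet{K}}=0$ whenever $i\notin K$. Under the product decomposition $\Pet{K}\cong\prod_k\Pet{n_k}$ from~\eqref{eq: decomposition into products}, the subspace $V_i$ in the tautological filtration is a fixed subspace of $\C^n$ whenever $i\notin K$ (it equals the span of the completed blocks lying below position $i$ together with the standard basis vectors in the gap positions), so the determinant line bundle $\det(E_i^*)|_{\Pet{K}}$ is trivial and $\varpi_i|_{\Pet{K}}=c_1(\det(E_i^*))|_{\Pet{K}}$ vanishes. Consequently $\prod_{i\in J}\varpi_i|_{\Pet{K}}=0$ whenever $J\not\subseteq K$; in particular, when $|J|=|K|$ the off-diagonal pairings $\int_{\Pet{K}}\prod_{i\in J}\varpi_i$ all vanish. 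For the diagonal entry, the K\"unneth isomorphism on $\Pet{K}\cong\prod_k\Pet{n_k}$ gives
\[
\int_{\Pet{K}}\prod_{i\in K}\varpi_i \;=\; \prod_{k=1}^m \int_{\Pet{n_k}}\varpi_1\cdots\varpi_{n_k-1} \;=\; \prod_{k=1}^m (n_k-1)! \;=\; m_K,
\]
after re-indexing the classes on each block via $\overline{K_k}\to[n_k]$. Hence $\int_{\Pet{K}}\varpi_J=\delta_{J,K}$ for all $J,K$ with $|J|=|K|$, which is exactly what we need.

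The main obstacle is the single-factor evaluation $\int_{\Pet{m}}\varpi_1\cdots\varpi_{m-1}=(m-1)!$ on each factor $\Pet{n_k}$, which is a genuine numerical computation rather than a formal consequence of the product structure. One can approach it by induction on $m$ via a Monk-type expansion of $\varpi_i\cdot[\Pet{K'}]$ in the basis $\{[\Pet{K''}]\}$ coming from the Peterson Schubert calculus of \cite{fu-ha-ma,ha-ho-ma,HaTy}, or by reading it off from the mixed Eulerian number interpretation of Peterson intersection numbers developed in \cite{AHKZ,GoMiSi,Ho}. The structural argument above then reduces the entire theorem to this single identity.
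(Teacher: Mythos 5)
The theorem is quoted from \cite[Theorem~4.13]{AHKZ}; the paper gives no independent proof, so there is nothing internal to compare your argument against, and the assessment below is of the proposal on its own terms.

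Your structural reduction is sound. The vanishing $\varpi_i|_{\Pet{K}}=0$ for $i\notin K$ is correct: when $i\notin K$, every interval $\overline{K_k}$ lies entirely in $[1,i]$ or entirely in $[i+1,n]$ (an interval straddling $i$ would force $i\in K_k\subseteq K$), so $V_i=\langle e_1,\ldots,e_i\rangle$ is constant over $\Pet{K}$, $E_i|_{\Pet{K}}$ is trivial, and $\varpi_i|_{\Pet{K}}=c_1(\det E_i^*)|_{\Pet{K}}=0$. The K\"unneth factorization of the diagonal entry is also justified: for $i\in K_k$, under the product decomposition \eqref{eq: decomposition into products} the bundle $E_i|_{\Pet{K}}$ has a fixed trivial subbundle with quotient the corresponding tautological bundle on the factor $\Pet{n_k}$, so $\varpi_i|_{\Pet{K}}$ pulls back from that factor. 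And the universal-coefficient argument (using $H_{\mathrm{odd}}(\Pet{n};\Z)=0$, freeness, and the $\Z$-basis $\{[\Pet{J}]\}$ of $H_*(\Pet{n};\Z)$ cited from \cite{AHKZ}) correctly converts $\int_{\Pet{K}}\varpi_J=\delta_{J,K}$ into integrality of $\varpi_J$ together with the $\Z$-basis property.

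The gap is that you do not actually establish the normalization $\int_{\Pet{m}}\varpi_1\cdots\varpi_{m-1}=(m-1)!$, you only flag it. This identity is precisely where the content of the theorem lies: without it your argument shows only that the pairing matrix is diagonal over $\Q$ with nonzero entries, which gives each $\varpi_J$ as a rational multiple of a primitive integral class, not that $\varpi_J$ itself is integral and that the family is unimodular. By Proposition~\ref{prop: presentation 10} the needed identity is equivalent to $\int_{\Pet{m}}x_1\cdots x_{m-1}=1$. A route consistent with the paper's toolkit would be to apply Lemma~\ref{lem: degeneration} (or Proposition~\ref{prop: known things 4}) to replace $\Pet{m}$ by the smooth toric variety $\Perm{m}$ and evaluate $\int_{\Perm{m}}x_1\cdots x_{m-1}$ by localization or directly from the fan. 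Alternatively, \cite{AHKZ} introduces explicit opposite cycles $\Omega_J$ dual to the $\Pet{J}$ (mentioned in the Remark preceding Theorem~\ref{thm: main theorem 2} in Section~\ref{subsec: ring presentation for Pet}) and identifies $\varpi_J$ with the class represented by $\Omega_J$, which realizes the pairing geometrically and fixes the normalization. Until this normalization is supplied, the proposal is a correct reduction but not a proof.
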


\vspace{10pt}

In what follows, we give an integral expression for $\varpi_J$ in terms of $x_1,x_2,\ldots,x_n$ (see Corollary~\ref{cor: presentation 20} below).
For this purpose, we prepare two technical lemmas.

\begin{lemma}\label{lem: presentation 10}
For $1\le i\le n-1$, we have
\begin{align*}
 x_1^{d+1} + x_2^{d+1} + \cdots + x_i^{d+1}
 =  
 (x_1^d + x_2^d + \cdots + x_i^d )x_{i+1} 
 \qquad (d=1,2,\ldots) .
\end{align*} 
\end{lemma}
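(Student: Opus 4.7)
The plan is to reduce the whole family of identities indexed by $d \ge 1$ to the single identity
\[
\sum_{k=1}^i x_k^{d+1} = \varpi_i\, x_{i+1}^d,
\]
since two applications of this formula (at exponents $d$ and $d-1$) immediately yield the claim. I will prove this single identity by a summation-by-parts argument built on the Peterson relation
\[
\varpi_j(x_j - x_{j+1}) = 0 \quad \text{in } H^*(\Pet{n};\Z), \qquad 1 \le j \le n-1. \quad (\star)
\]

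Granted $(\star)$, I would multiply by $x_j^{d-1} + x_j^{d-2}x_{j+1} + \cdots + x_{j+1}^{d-1}$ and use the factorization $a^d - b^d = (a - b)(a^{d-1} + \cdots + b^{d-1})$ to deduce $\varpi_j(x_j^d - x_{j+1}^d) = 0$ for every $d \ge 1$. Summing over $j = 1,\ldots,i$, reindexing the second half of the sum via $k = j+1$, and telescoping using $\varpi_j - \varpi_{j-1} = x_j$ (with $\varpi_0 = 0$) gives
\[
0 \;=\; \sum_{j=1}^{i} \varpi_j(x_j^d - x_{j+1}^d) \;=\; \varpi_1 x_1^d + \sum_{j=2}^{i} (\varpi_j - \varpi_{j-1}) x_j^d - \varpi_i x_{i+1}^d \;=\; \sum_{j=1}^{i} x_j^{d+1} - \varpi_i\, x_{i+1}^d,
\]
which is precisely the desired single identity.

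The real work therefore lies in establishing $(\star)$ over $\Z$. Over $\Q$, this relation is well-known from the ring presentations of $H^*(\Pet{n};\Q)$ in \cite{fu-ha-ma, ha-ho-ma}; since $H^*(\Pet{n};\Z)$ is torsion-free by Proposition~\ref{prop: known things 0 Pet}, it embeds into $H^*(\Pet{n};\Q)$, and so the vanishing of $\varpi_j(x_j - x_{j+1})$ automatically lifts to integer coefficients. If a more intrinsic derivation is preferred, I would try to obtain $(\star)$ directly by analyzing the section of the line bundle $(E_j/E_{j-1})^* \otimes (E_{j+1}/E_j)$ over $\Pet{n}$ induced by the Peterson condition $NV_j \subseteq V_{j+1}$, or else by expanding $\varpi_j(x_j - x_{j+1})$ in the integral basis $\{\varpi_J\}_{J \subseteq [n-1]}$ of Theorem~\ref{thm: main thm of AHKZ} and checking that all coefficients vanish. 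This integrality issue for $(\star)$ is the main obstacle; once it is in place, the rest of the argument is a routine algebraic manipulation.
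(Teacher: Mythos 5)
Your proof is correct and takes a genuinely different, more streamlined route than the paper's. The paper proves the case $d=1$ by telescoping the identity $x_j^2 = \varpi_j x_{j+1} - \varpi_{j-1} x_j$ (derived from $\alpha_j\varpi_j=0$), and then runs an induction on $d$: in the inductive step it derives $\sum x_j^{d+1} = \varpi_i x_{i+1}^d$ using the same factorization trick you use, but then invokes the inductive hypothesis repeatedly to convert $\varpi_i x_{i+1}^d$ into $(\sum x_j^d)x_{i+1}$. You avoid the induction entirely: the factorization of $a^d - b^d$ applied to $\varpi_j(x_j - x_{j+1})=0$ gives $\varpi_j(x_j^d - x_{j+1}^d)=0$ for all $d\ge 1$ at once, and a single Abel summation yields $\sum x_j^{d+1} = \varpi_i x_{i+1}^d$. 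Observing that this single family of identities (at exponents $d$ and $d-1$) directly implies the lemma is the slick step — the inductive hypothesis never needs to be invoked. Your approach buys brevity and transparency; the paper's is slightly more redundant but sets up the reader for the analogous inductions in Lemma~\ref{lem: presentation 20} and Proposition~\ref{prop: presentation 10}.

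One small remark on your caveat about the integrality of $(\star)$: the paper simply cites this from \cite[Lemma~4.7]{AHKZ} as an already-established relation in $H^*(\Pet{n};\Z)$, so no extra work is needed. That said, your proposed fallback — that $\alpha_j\varpi_j$ is an integral class mapping to zero in $H^*(\Pet{n};\Q)$, and hence vanishes integrally because Proposition~\ref{prop: known things 0 Pet} makes the coefficient map $H^*(\Pet{n};\Z)\to H^*(\Pet{n};\Q)$ injective — is also a perfectly valid justification, and it is essentially the same style of argument the paper itself uses at the end of the proof of Corollary~\ref{cor: presentation 20}.
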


\begin{proof}
We first prove the claim for the case $d=1$ with $1\le i\le n-1$.
Recall from \cite[Lemma~4.7]{AHKZ} (cf.\ \cite{fu-ha-ma} and \cite{ha-ho-ma}) that
we have
\begin{align}\label{eq: alphapi=0}
 \alpha_j \varpi_j= 0 \qquad (1\le j\le n-1),
\end{align} 
where $\alpha_j\coloneqq x_j-x_{j+1}$.
Since we have $\varpi_j=\varpi_{j-1}+x_j$ by definition (with the convention $\varpi_0=0$), we have from \eqref{eq: alphapi=0} that
\begin{align*}
 (x_j - x_{j+1} ) (\varpi_{j-1} + x_j )= 0 \qquad (1\le j\le n-1).
\end{align*} 
From this, we obtain
\begin{equation}
\begin{split}\label{eq: elem 60}
 x_j^2 
 &= (\varpi_{j-1} + x_j )x_{j+1} - \varpi_{j-1}x_j \\
 &= \varpi_j x_{j+1} - \varpi_{j-1}x_j  \qquad \text{(by $\varpi_j=\varpi_{j-1}+x_j$ again)}
\end{split}
\end{equation}
for $1\le j\le n-1$.
Thus, we obtain
\begin{align*}
 x_1^2 + x_2^2 + \cdots + x_i^2 
 &= (\varpi_1 x_{2} - \varpi_{0}x_1) + (\varpi_2 x_3 - \varpi_1x_2) + \cdots + (\varpi_i x_{i+1} - \varpi_{i-1}x_i) \\
 &= - \varpi_{0}x_1 + \varpi_i x_{i+1} \\
 &= (x_1 + x_2 + \cdots + x_i) x_{i+1}
\end{align*} 
which gives the claim for $d=1$.

We assume that $d\ge2$ in what follows, and we prove the claim of this lemma by induction on $d$.
Assume by induction that 
\begin{align}\label{eq: ind hyp}
 x_1^{\ell+1} + x_2^{\ell+1} + \cdots + x_i^{\ell+1}
 =  
 (x_1^{\ell} + x_2^{\ell} + \cdots + x_i^{\ell} )x_{i+1} 
  \qquad (1\le \ell\le d-1),
\end{align} 
and we prove the claim for the case $\ell=d$.
To begin with, notice that
\begin{align*}
 x_j^{d+1} 
 = x_j^2\cdot x_j^{d-1}
 = \varpi_j x_j^{d-1} x_{j+1} - \varpi_{j-1} x_j^d
 \qquad (1\le j\le n-1)
\end{align*} 
by \eqref{eq: elem 60}.
Thus, we have
\begin{align}\label{eq: elem 70}
 x_1^{d+1} + x_2^{d+1} + \cdots + x_i^{d+1}
 = \sum_{j=1}^i \varpi_j x_j^{d-1} x_{j+1} - \sum_{j=1}^i \varpi_{j-1} x_j^d.
\end{align} 
The second summand in the right hand side can be written as
\begin{align*}
 -\sum_{j=1}^i \varpi_{j-1} x_j^d 
 = -\sum_{j=0}^{i-1} \varpi_{j} x_{j+1}^d 
 = -\sum_{j=1}^{i} \varpi_j x_{j+1}^d + \varpi_i x_{i+1}^d
\end{align*} 
since $\varpi_0=0$ by convention.
Applying this to \eqref{eq: elem 70}, we obtain
\begin{align}\label{eq: elem 170}
 x_1^{d+1} + x_2^{d+1} + \cdots + x_i^{d+1}
 = \sum_{j=1}^i \varpi_j ( x_j^{d-1}- x_{j+1}^{d-1}) x_{j+1} + \varpi_i x_{i+1}^d .
\end{align} 
In this equality, the first summand in the right hand side vanishes since it can be computed as
\begin{align*}
 \sum_{j=1}^i \varpi_j ( x_j^{d-1}- x_{j+1}^{d-1}) x_{j+1} 
 &= \sum_{j=1}^i \varpi_j (x_j - x_{j+1}) (x_j^{d-2}+x_j^{d-3}x_{j+1}+\cdots+x_{j+1}^{d-2}) x_{j+1} \\
 &=0 \qquad \text{(by \eqref{eq: alphapi=0})},
\end{align*}
where we take the convention $x_j^{d-2}+x_j^{d-3}x_{j+1}+\cdots+x_{j+1}^{d-2}=1$ when $d=2$.
Therefore, \eqref{eq: elem 170} and $\varpi_i=x_1+x_2+\cdots+x_i$ imply that
\begin{align*}
 x_1^{d+1} + x_2^{d+1} + \cdots + x_i^{d+1}
 = (x_1+x_2+\cdots+x_i) x_{i+1}^d .
\end{align*} 
Applying the inductive hypothesis \eqref{eq: ind hyp} to the right hand side repeatedly, we obtain
\begin{align*}
 x_1^{d+1} + x_2^{d+1} + \cdots + x_i^{d+1}
 &= (x_1 + x_2 + \cdots + x_i ) x_{i+1}^d \\
 &= (x_1^2 + x_2^2 + \cdots + x_i^2 )x_{i+1}^{d-1}\\
 &= \cdots \\
 &= (x_1^d + x_2^d + \cdots + x_i^d)x_{i+1}
\end{align*} 
which gives \eqref{eq: ind hyp} for the case $\ell=d$, as desired.
\end{proof}

\vspace{10pt}
For $1\le i\le n$ and a partition $\lambda=(\lambda_1,\lambda_2,\ldots)$ consisting of a weakly decreasing sequence of non-negative integers, let $m_{\lambda}(x_1,x_2\ldots,x_i)$ be the monomial symmetric polynomial in $x_1,x_2\ldots,x_i \in H^2(\Pet{n};\Z)$. That is, $m_{\lambda}(x_1,x_2\ldots,x_i)$ is the sum of all distinct monomials obtained from $x_1^{\lambda_1}x_2^{\lambda_2}\cdots x_i^{\lambda_i}$ by permuting the indices (e.g., \cite[Sect.~6.1]{fult97}).
For a positive integer $d$ and a non-negative integer $k$, 
we denote by $(d,1^k)$ the partition given by
\begin{align*}
 (d,1^k) = (d,\underbrace{1,1,\ldots,1}_{k},0,0,\cdots).
\end{align*} 
For $1\le k<i\le n$, we have the following identity:
\begin{equation}\label{eq: fund identity}
\begin{split}
 &(x_1^d + \cdots + x_i^d ) e_k(x_1,x_2,\ldots x_i)\\
 &\hspace{30pt}= m_{d+1,1^{k-1}}(x_1,x_2,\ldots,x_i) + m_{d,1^k}(x_1,x_2,\ldots,x_i) 
 \qquad (d=2,3,\ldots).
\end{split}
\end{equation}
One may obtain this identity by expanding the product in the left hand side and rearranging terms, without using any non-trivial algebraic relations for $x_1,x_2,\ldots,x_n$ in $H^*(\Pet{n};\Z)$. The following example illustrates the idea of the proof.

\begin{example}\label{ex: fond identity}
{\rm 
Let $d=5$, $i=4$, and $k=3$. Then we have
\begin{align*}
&(x_1^5 + x_2^5 + x_3^5 + x_4^5) e_3(x_1,x_2,x_3,x_4) \\
&\hspace{20pt}= 
(x_1^5 + x_2^5 + x_3^5 + x_4^5)\cdot (x_1x_2x_3 + x_1x_2x_4 + x_1x_3x_4 + x_2x_3x_4 ) \\
&\hspace{20pt}= 
m_{6,1,1}(x_1,x_2,x_3,x_4) + m_{5,1,1,1}(x_1,x_2,x_3,x_4),
\end{align*}
where we have
\begin{align*}
m_{6,1,1}(x_1,x_2,x_3,x_4)
&= x_1^6x_2x_3 + x_1^6x_2x_4 + x_1^6x_3x_4 
    + x_2^6x_1x_3 + x_2^6x_1x_4 + x_2^6x_3x_4\\
    &\hspace{30pt}
    + x_3^6x_1x_2 + x_3^6x_1x_4 + x_3^6x_2x_4
    + x_4^6x_1x_2 + x_4^6x_1x_3 + x_4^6x_2x_3
\end{align*}
and
\begin{align*}
m_{5,1,1,1}(x_1,x_2,x_3,x_4)
&= x_1^5x_2x_3x_4
    + x_2^5x_1x_3x_4 
    + x_3^5x_1x_2x_4
    + x_4^5x_1x_2x_3.
\end{align*}
}
\end{example}

\vspace{20pt}

We note that we have a slightly different identity when $d=1$. Namely, we have
\begin{equation}\label{eq: fund identity 2}
\begin{split}
 &(x_1 + \cdots + x_i ) e_k(x_1,x_2,\ldots x_i) \\
 &\hspace{30pt}= m_{2,1^{k-1}}(x_1,x_2,\ldots,x_i) + (k+1) m_{1^{k+1}}(x_1,x_2,\ldots,x_i) 
\end{split}
\end{equation} 
for $1\le k<i\le n$. Similarly to \eqref{eq: fund identity}, this identity can be obtained by expanding the product in the left hand side as illustrated in the following example.

\begin{example}\label{ex: fond identity 2}
{\rm 
Let $i=4$ and $k=2$. Then we have 
\begin{align*}
&(x_1 + x_2 + x_3 + x_4) e_2(x_1,x_2,x_3,x_4)\\
&\hspace{30pt}
= (x_1 + x_2 + x_3 + x_4) (x_1x_2 + x_1x_3 + x_1x_4 + x_2x_3 + x_2x_4 + x_3x_4)
\end{align*}
which is equal to the sum of 
\begin{align*}
m_{2,1}(x_1,x_2,x_3,x_4)
&= x_1^2x_2 + x_1^2x_3 + x_1^2x_4 
    + x_2^2x_1 + x_2^2x_3 + x_2^2x_4\\
    &\hspace{30pt}
    + x_3^2x_1 + x_3^2x_2 + x_3^2x_4
    + x_4^2x_1 + x_4^2x_2 + x_4^2x_3
\end{align*}
and
\begin{align*}
    &x_1(x_2x_3 + x_3x_4 + x_2x_4)
    + x_2(x_1x_3 + x_1x_4 + x_3x_4)\\
    &\hspace{30pt}+ x_3(x_1x_2 + x_1x_4 + x_2x_4)
    + x_4(x_1x_2 + x_1x_3 + x_2x_3)
    = 3 m_{1,1,1}(x_1,x_2,x_3,x_4).
\end{align*}
}
\end{example}

\vspace{20pt}

The next claim generalizes the previous lemma.
\begin{lemma}\label{lem: presentation 20}
For $0\le k< i\le n-1$, we have
\begin{align*}
 m_{d+1,1^k}(x_1,x_2,\ldots,x_i)
 = 
 \begin{cases} 
 m_{d,1^k}(x_1,x_2,\ldots,x_i) x_{i+1}  \quad &(d\ge2), \\\\
 (k+1) m_{1^{k+1}}(x_1,x_2,\ldots,x_i) x_{i+1} &(d=1).
 \end{cases}
\end{align*} 
\end{lemma}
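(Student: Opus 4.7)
My plan is to proceed by induction on $k$, treating the two subcases $d \ge 2$ and $d = 1$ in sequence (since the $d = 1$ step will need to invoke the $d = 2$ result). The base case $k = 0$ reduces to Lemma~\ref{lem: presentation 10} for both subcases: $m_{d+1, 1^0}(x_1, \ldots, x_i) = x_1^{d+1} + \cdots + x_i^{d+1}$ and $m_{d, 1^0}(x_1, \ldots, x_i) = x_1^d + \cdots + x_i^d$ reduce the $d \ge 2$ claim directly to Lemma~\ref{lem: presentation 10}, while the $d = 1$ right-hand side $(0+1) m_{1^1}(x_1, \ldots, x_i) x_{i+1} = (x_1 + \cdots + x_i) x_{i+1}$ coincides with $x_1^2 + \cdots + x_i^2$ via the $d = 1$ instance of Lemma~\ref{lem: presentation 10}.

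For the inductive step in the case $d \ge 2$, I would assume the claim at level $k$ (for every $d \ge 2$) and consider $m_{d+1, 1^{k+1}}(x_1, \ldots, x_i)$ with $0 \le k \le i-2$. First, I would apply~\eqref{eq: fund identity} with parameters $(d+1, k+1)$ to obtain
\begin{align*}
m_{d+1, 1^{k+1}}(x_1, \ldots, x_i) = \bigl(x_1^{d+1} + \cdots + x_i^{d+1}\bigr) e_{k+1}(x_1, \ldots, x_i) - m_{d+2, 1^k}(x_1, \ldots, x_i).
\end{align*}
Then I would rewrite the power sum as $(x_1^d + \cdots + x_i^d) x_{i+1}$ using Lemma~\ref{lem: presentation 10}, and expand $(x_1^d + \cdots + x_i^d) e_{k+1}(x_1, \ldots, x_i)$ by a second use of~\eqref{eq: fund identity} (with parameters $(d, k+1)$) as $m_{d+1, 1^k} + m_{d, 1^{k+1}}$. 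The inductive hypothesis applied at level $k$ with $d$ replaced by $d+1$ supplies $m_{d+2, 1^k} = m_{d+1, 1^k}\, x_{i+1}$, and the two $m_{d+1, 1^k}\, x_{i+1}$ contributions cancel, leaving the desired identity $m_{d+1, 1^{k+1}} = m_{d, 1^{k+1}}\, x_{i+1}$.

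The case $d = 1$ at level $k+1$ proceeds in parallel, but using~\eqref{eq: fund identity 2} in place of the second application of~\eqref{eq: fund identity}: starting from $m_{2, 1^{k+1}} = (x_1^2 + \cdots + x_i^2)\, e_{k+1}(x_1, \ldots, x_i) - m_{3, 1^k}(x_1, \ldots, x_i)$, I would substitute $x_1^2 + \cdots + x_i^2 = (x_1 + \cdots + x_i)\, x_{i+1}$ via Lemma~\ref{lem: presentation 10} and then expand $(x_1 + \cdots + x_i)\, e_{k+1}(x_1, \ldots, x_i) = m_{2, 1^k} + (k+2)\, m_{1^{k+2}}$ via~\eqref{eq: fund identity 2}. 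The already-established $d = 2$ case gives $m_{3, 1^k} = m_{2, 1^k}\, x_{i+1}$, cancelling the $m_{2, 1^k}\, x_{i+1}$ term and leaving exactly $(k+2)\, m_{1^{k+2}}(x_1, \ldots, x_i)\, x_{i+1}$, as required. The index hypotheses $1 \le k+1 < i$ required by~\eqref{eq: fund identity} and~\eqref{eq: fund identity 2} correspond precisely to the admissible range $0 \le k \le i-2$ of inductive steps, so the induction covers every $k \in \{0, 1, \ldots, i-1\}$. I do not anticipate a substantive obstacle beyond keeping the indices of the monomial symmetric functions straight, as the argument is essentially a telescoping cancellation bootstrapped off of Lemma~\ref{lem: presentation 10}.
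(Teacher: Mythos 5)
Your proof is correct and takes essentially the same approach as the paper: both inductions on $k$ hinge on multiplying Lemma~\ref{lem: presentation 10} against $e_{k+1}$ (or $e_k$ in the paper's indexing) and expanding via \eqref{eq: fund identity} and \eqref{eq: fund identity 2}, then cancelling the $m_{d+2,1^k}$ term with the inductive hypothesis; the only difference is cosmetic — you solve for the target monomial symmetric polynomial and substitute, while the paper matches expansions of the two sides of the identity term by term.
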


\begin{proof}
When $k=0$, the claim is
\begin{align*}
 m_{d+1,1^0}(x_1,x_2,\ldots,x_i)
 = 
 \begin{cases} 
 m_{d,1^0}(x_1,x_2,\ldots,x_i) x_{i+1}  \quad &(d\ge2), \\\\
 m_{1^{1}}(x_1,x_2,\ldots,x_i) x_{i+1} &(d=1)
 \end{cases}
\end{align*} 
which is equivalent to 
\begin{align}\label{eq: elem 10}
x_1^{d+1} + x_2^{d+1} + \cdots + x_i^{d+1}
=  
(x_1^d + x_2^d + \cdots + x_i^d )x_{i+1} 
\quad  (d\ge1).
\end{align} 
Thus, the claim follows by the previous lemma when $k=0$.

We assume that $1\le k<i$ in what follows, and we prove the claim of this lemma by induction on $k$.
Assume by induction that
\begin{align}\label{eq: elem 40}
 m_{d+1,1^{k-1}}(x_1,x_2,\ldots,x_i) 
 = 
 \begin{cases} 
  m_{d,1^{k-1}}(x_1,x_2,\ldots,x_i) x_{i+1} \quad &(d\ge2), \\\\
  k m_{1^{k}}(x_1,x_2,\ldots,x_i) x_{i+1} &(d=1).
 \end{cases}
\end{align} 
Multiplying $e_k(x_1,x_2,\ldots,x_i)$ to the both sides of \eqref{eq: elem 10}, we have
\begin{equation}
\begin{split}\label{eq: from k=0}
  &(x_1^{d+1} + x_2^{d+1} + \cdots + x_i^{d+1})e_k(x_1,x_2,\ldots,x_i) \\
  &\hspace{30pt}= 
  (x_1^d + x_2^d + \cdots + x_i^d )e_k(x_1,x_2,\ldots,x_i)x_{i+1}.
\end{split}
\end{equation} 
By \eqref{eq: fund identity},
the left hand side of \eqref{eq: from k=0} is equal to
\begin{align}\label{eq: elem 30}
 m_{d+2,1^{k-1}}(x_1,x_2,\ldots,x_i) + m_{d+1,1^k}(x_1,x_2,\ldots,x_i)
\end{align}
since $d+1\ge2$.
By \eqref{eq: fund identity} and  \eqref{eq: fund identity 2}, the right hand side of \eqref{eq: from k=0} is equal to 
\begin{align}\label{eq: elem 20}
 \begin{cases} 
 m_{d+1,1^{k-1}}(x_1,x_2,\ldots,x_i) x_{i+1} + m_{d,1^k}(x_1,x_2,\ldots,x_i) x_{i+1} \quad &(d\ge2), \\\\
 m_{2,1^{k-1}}(x_1,x_2,\ldots,x_i) x_{i+1} + (k+1)m_{1^{k+1}}(x_1,x_2,\ldots,x_i) x_{i+1} &(d=1).
 \end{cases}
\end{align}
In both cases of $d\ge2$ and $d=1$, the first summands in \eqref{eq: elem 30} and \eqref{eq: elem 20} coincide because of the first case of the inductive hypothesis \eqref{eq: elem 40}.
Thus, the equality \eqref{eq: from k=0} implies that
\begin{align*}
 m_{d+1,1^k}(x_1,x_2,\ldots,x_i)
 = 
 \begin{cases} 
 m_{d,1^k}(x_1,x_2,\ldots,x_i) x_{i+1}  \quad &(d\ge2), \\\\
 (k+1) m_{1^{k+1}}(x_1,x_2,\ldots,x_i) x_{i+1} &(d=1),
 \end{cases}
\end{align*} 
as desired.
\end{proof}

\vspace{10pt}

For the next proposition, we recall that $\varpi_i=x_1+x_2+\cdots+x_i$ for $1\le i\le n-1$ from \eqref{eq: def of varpi i}.

\begin{proposition}\label{prop: presentation 10}
For $1\le a\le b\le n-1$, we have
\begin{align*}
\underbrace{\varpi_{a}\varpi_{a+1}\cdots\varpi_{b}}_{k} = k! e_k(x_1,x_2,\ldots,x_b) ,
\end{align*} 
where $k=b-a+1$.
\end{proposition}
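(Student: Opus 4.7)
The plan is to proceed by induction on $k = b-a+1$. The base case $k=1$ (i.e., $a=b$) is immediate since $\varpi_a = x_1 + \cdots + x_a = e_1(x_1,\ldots,x_a)$ by definition \eqref{eq: def of varpi i}. For the inductive step, assume the formula at level $k-1$, so that $\varpi_a \varpi_{a+1} \cdots \varpi_{b-1} = (k-1)!\, e_{k-1}(x_1,\ldots,x_{b-1})$. Then
\begin{align*}
\varpi_a \varpi_{a+1} \cdots \varpi_b = (k-1)!\, \varpi_b \cdot e_{k-1}(x_1,\ldots,x_{b-1}),
\end{align*}
so the entire proposition reduces to the single key identity
\begin{align*}
\varpi_b \cdot e_{k-1}(x_1,\ldots,x_{b-1}) = k \cdot e_k(x_1,\ldots,x_b) \quad \text{in } H^*(\Pet{n};\Z).
\end{align*}

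To prove this key identity, I would decompose $\varpi_b = \varpi_{b-1} + x_b$ and treat the two pieces separately. The piece $x_b \cdot e_{k-1}(x_1,\ldots,x_{b-1}) = e_k(x_1,\ldots,x_b) - e_k(x_1,\ldots,x_{b-1})$ is simply Pascal's recurrence for elementary symmetric polynomials (a polynomial identity). The piece $\varpi_{b-1} \cdot e_{k-1}(x_1,\ldots,x_{b-1}) = (x_1+\cdots+x_{b-1}) e_{k-1}(x_1,\ldots,x_{b-1})$ is rewritten via \eqref{eq: fund identity 2} (with $i = b-1$ and the subscript shifted from $k$ to $k-1$) as $m_{2,1^{k-2}}(x_1,\ldots,x_{b-1}) + k\, e_k(x_1,\ldots,x_{b-1})$. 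Now Lemma \ref{lem: presentation 20} with $d=1$ converts the monomial piece into $(k-1)\, e_{k-1}(x_1,\ldots,x_{b-1})\, x_b$, which is the step that actually uses the Peterson relations. Adding the two contributions and regrouping yields
\begin{align*}
\varpi_b \cdot e_{k-1}(x_1,\ldots,x_{b-1})
= k\bigl[x_b\, e_{k-1}(x_1,\ldots,x_{b-1}) + e_k(x_1,\ldots,x_{b-1})\bigr] = k\, e_k(x_1,\ldots,x_b),
\end{align*}
closing the induction.

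The one place where care is required is the boundary case $a = 1$ (equivalently $k = b$): here \eqref{eq: fund identity 2} is stated only for the strict inequality $k < i$, so the application with subscript $k-1$ and $i = b-1$ sits exactly at the excluded endpoint. I would handle this by expanding $(x_1+\cdots+x_{b-1})\, e_{b-1}(x_1,\ldots,x_{b-1})$ directly to get $m_{2,1^{b-2}}(x_1,\ldots,x_{b-1})$, which is consistent with the formula read under the convention $e_b(x_1,\ldots,x_{b-1}) = 0$. Since Lemma \ref{lem: presentation 20} itself is stated for $0 \le k < i$ and remains valid in the required range, the same combination closes the induction uniformly. No step is technically deep; the main obstacle is keeping the index bookkeeping straight and confirming the boundary application.
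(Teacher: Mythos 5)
Your proof is correct and follows essentially the same route as the paper: induction on $k$, decomposing $\varpi_b$ to isolate $(x_1+\cdots+x_{b-1})e_{k-1}(x_1,\ldots,x_{b-1})$, applying \eqref{eq: fund identity 2} to produce $m_{2,1^{k-2}}$ plus an $e_k$-term, converting $m_{2,1^{k-2}}$ via Lemma~\ref{lem: presentation 20} (the only step using the Peterson relations), and recombining by Pascal's rule. The one genuine improvement in your write-up is the explicit treatment of the boundary case $a=1$ (i.e.\ $k=b$), where \eqref{eq: fund identity 2} sits at the excluded endpoint $k-1=i$; the paper applies the identity without comment there, and your observation that it extends under the convention $m_{1^{i+1}}(x_1,\ldots,x_i)=0$ cleanly resolves this.
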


\begin{proof}
For the case $k=1$ (i.e., $b=a$), the claim is obvious by \eqref{eq: def of varpi i}. 
We assume that $k\ge2$, and we prove the claim by induction on $k$.
By the inductive hypothesis, the left hand side can be computed as
\begin{align*}
&(\varpi_{a}\varpi_{a+1}\cdots\varpi_{b-1})\cdot \varpi_b \\ 
&\hspace{20pt}= (k-1)! e_{k-1}(x_1,x_2,\ldots,x_{b-1}) \cdot \varpi_b \\
&\hspace{20pt}= (k-1)! e_{k-1}(x_1,x_2,\ldots,x_{b-1}) \cdot (x_1+x_2+\cdots+x_{b-1}+x_b) \\
&\hspace{20pt}= (k-1)! \Big( (x_1+\cdots+x_{b-1})e_{k-1}(x_1,\ldots,x_{b-1}) +  e_{k-1}(x_1,\ldots,x_{b-1})x_b \Big).
\end{align*} 
Applying \eqref{eq: fund identity 2} to the last expression, we obtain
\begin{equation}\label{eq: elem 190}
\begin{split}
&(\varpi_{a}\varpi_{a+1}\cdots\varpi_{b-1})\cdot \varpi_b \\
&\hspace{20pt}=
(k-1)! \Big( m_{2,1^{k-2}}(x_1,x_2,\ldots,x_{b-1})+ km_{1^{k}}(x_1,x_2,\ldots,x_{b-1}) \\
&\hspace{250pt} + e_{k-1}(x_1,\ldots,x_{b-1})x_b \Big).
\end{split}
\end{equation} 
For the second summand in the parenthesis of the right hand side, we have
\begin{align*}
km_{1^{k}}(x_1,x_2,\ldots,x_{b-1}) 
&= k e_k(x_1,x_2,\ldots,x_{b-1}) \qquad \text{(by the definition of $m_{1^{k}}$)}\\
&= k e_k(x_1,x_2,\ldots,x_{b}) - k e_{k-1}(x_1,x_2,\ldots,x_{b-1})x_b .
\end{align*}
Applying this to \eqref{eq: elem 190}, we obtain
\begin{equation*}
\begin{split}
&(\varpi_{a}\varpi_{a+1}\cdots\varpi_{b-1})\cdot \varpi_b \\
&\hspace{20pt}=
(k-1)! \Big( m_{2,1^{k-2}}(x_1,x_2,\ldots,x_{b-1}) + k e_k(x_1,x_2,\ldots,x_{b})\\
&\hspace{230pt} - (k-1)e_{k-1}(x_1,\ldots,x_{b-1})x_b \Big) \\
&\hspace{20pt}=
(k-1)! \Big( m_{2,1^{k-2}}(x_1,x_2,\ldots,x_{b-1}) + k e_k(x_1,x_2,\ldots,x_{b})\\
&\hspace{230pt} - (k-1) m_{1^{k-1}}(x_1,\ldots,x_{b-1})x_b \Big) ,
\end{split}
\end{equation*} 
where we used $e_{k-1}(x_1,\ldots,x_{b-1})=m_{1^{k-1}}(x_1,\ldots,x_{b-1})$ again for the last equality. 
In the last expression, the sum of the first and the third summands is equal to $0$ by the case $d=1$ of Lemma~\ref{lem: presentation 20} since $0\le k-2<b-1$.
Thus, the last expression is equal to $k!e_k(x_1,x_2,\ldots,x_b)$.
\end{proof}

\vspace{8pt}

We now obtain the following formula which expresses $\varpi_{J}$ in \eqref{eq: def of varpiJ} as an integer coefficient polynomial in $x_1,x_2,\ldots,x_n$.

\begin{corollary}\label{cor: presentation 20}
For $J\subseteq[n-1]$, we have 
\begin{align*}
\varpi_{J} 
= \prod_{k=1}^m e_{|J_k|}(x_1,x_2,\ldots,x_{\max J_k}) 
\quad \text{in $H^{2|J|}(Pet_n;\Z)$},
\end{align*} 
where $J=J_1\sqcup J_2 \sqcup \cdots \sqcup J_m$ is the decomposition into the connected components.
\end{corollary}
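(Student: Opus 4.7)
The plan is to reduce this to a product over connected components of $J$ and then apply Proposition~\ref{prop: presentation 10} to each component separately. First, I would use the decomposition $J = J_1 \sqcup J_2 \sqcup \cdots \sqcup J_m$ to rewrite
\begin{align*}
\prod_{i \in J} \varpi_i = \prod_{k=1}^{m} \prod_{i \in J_k} \varpi_i.
\end{align*}

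Next, for each $k$, since $J_k$ is a connected subset of the Dynkin diagram, I can write $J_k = \{a_k, a_k+1, \ldots, b_k\}$ with $b_k = \max J_k$ and $b_k - a_k + 1 = |J_k|$. Applying Proposition~\ref{prop: presentation 10} to the block from $a_k$ to $b_k$ gives
\begin{align*}
\prod_{i \in J_k} \varpi_i = \varpi_{a_k} \varpi_{a_k+1} \cdots \varpi_{b_k} = |J_k|! \, e_{|J_k|}(x_1, x_2, \ldots, x_{\max J_k}).
\end{align*}

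Multiplying these across $k = 1, \ldots, m$ produces a factor of $\prod_{k=1}^{m} |J_k|! = m_J$, so
\begin{align*}
\prod_{i \in J} \varpi_i = m_J \prod_{k=1}^{m} e_{|J_k|}(x_1, x_2, \ldots, x_{\max J_k}).
\end{align*}
Dividing by $m_J$ and invoking the definition $\varpi_J = \frac{1}{m_J}\prod_{i\in J} \varpi_i$ from \eqref{eq: def of varpiJ} yields the claimed formula. Note that the right-hand side is manifestly an integer-coefficient polynomial in $x_1, \ldots, x_n$, so this simultaneously confirms the integrality statement of Theorem~\ref{thm: main thm of AHKZ} via a purely algebraic route.

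There is no substantial obstacle here: the work is already done in Proposition~\ref{prop: presentation 10}, and the corollary is just the observation that the connected components of $J$ are exactly the consecutive blocks to which that proposition applies, with the combinatorial prefactor $m_J$ matching the product of the factorials produced by the proposition.
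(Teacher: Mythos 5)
Your argument matches the paper's proof step for step: split the product over the connected components, apply Proposition~\ref{prop: presentation 10} to each consecutive block $J_k=\{a_k,\dots,b_k\}$, collect the factors $|J_k|!$ into $m_J$, and cancel. The one thing the paper makes explicit that you elide is the justification for ``dividing by $m_J$'': the equality $\prod_{i\in J}\varpi_i = m_J\prod_k e_{|J_k|}(x_1,\dots,x_{\max J_k})$ holds in $H^*(\Pet{n};\Z)$, and cancelling the integer $m_J$ from an equality of integral classes requires that $H^*(\Pet{n};\Z)$ have no $m_J$-torsion, which the paper invokes via Proposition~\ref{prop: known things 0 Pet}. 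You should cite torsion-freeness at that point, but otherwise the reasoning is the same.
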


\begin{proof}
The previous proposition implies that
\begin{align*}
 \prod_{i\in J} \varpi_i 
 = \prod_{k=1}^m \Big( \prod_{i\in J_k} \varpi_i  \Big)
 = \prod_{k=1}^m \Big( |J_k|! e_{|J_k|}(x_1,x_2,\ldots,x_{\max J_k}) \Big)
\end{align*}
because each component $J_k$ is of the form $J_k=\{a,a+1,\ldots,b\}$
for some $a,b\in[n-1]$.
Since $H^*(Pet_n;\Z)$ is torsion free by Proposition~\ref{prop: known things 0 Pet}, we obtain the claim by dividing both sides of this equality by $m_J=|J_1|!|J_2|!\cdots |J_m|!$.
\end{proof}

\vspace{5pt}

\begin{example}\label{eg: varpi=e}
{\rm 
Let $n=7$. Then we have
\begin{align*}
 &\varpi_{\{3,4,5\}} = \frac{1}{6}\varpi_3\varpi_4\varpi_5 = e_3(x_1,x_2,x_3,x_4,x_5), \\
 &\varpi_{\{2,4,5\}} = \frac{1}{2}\varpi_2\varpi_4\varpi_5 =  e_1(x_1,x_2)e_2(x_1,x_2,x_3,x_4,x_5).
\end{align*} 
}
\end{example}

\vspace{10pt}

\subsection{A ring presentation of $H^*(\Pet{n};\Z)$}\label{subsec: ring presentation for Pet}
Recall from \eqref{eq: def of x i} that we have
\begin{align*}
 x_i = c_1((E_i/E_{i-1})^*|_{\Pet{n}}) 
 \in H^2(\Pet{n};\Z)
\end{align*}
and that it is the image of $\tau_i=c_1((E_i/E_{i-1})^*)\in H^2(Fl_n;\Z)$ under the restriction map 
\begin{align*}
i^*\colon H^*(Fl_n;\Z)\rightarrow H^*(\Pet{n};\Z).
\end{align*}
The ring $H^*(Fl_n;\Z)$ is generated by $\tau_1,\tau_2,\ldots,\tau_n$ as we saw in section~\ref{subsec: coh of flag}, and the map $i^*$ is surjective by \cite{Insko} (see Corollary~\ref{corollary: known things 3}).
Thus, we have a surjective ring homomorphism
\begin{align*}
 \RP \colon  \Z[y_1,y_2,\ldots,y_n] \rightarrow H^*(\Pet{n};\Z)
\end{align*}
which sends $y_i$ to $x_i$ $(1\le i\le n)$, where $\Z[y_1,y_2,\ldots,y_n]$ is the polynomial ring over $\Z$ with indeterminates $y_1,y_2,\ldots,y_n$. 
We regard this polynomial ring as a graded ring with $\deg y_i=2$ for $1\le i\le n$.
By construction, this map factors $H^*(Fl_n;\Z)$, and hence it maps $e_k(y_1,y_2,\ldots,y_n)$ to $0$ in $H^*(\Pet{n};\Z)$ for $1\le k\le n$ (see section~\ref{subsec: coh of flag}).

To give a ring presentation of $H^*(\Pet{n};\Z)$, we introduce the following homogeneous ideals of $\Z[y_1,y_2,\ldots,y_n]$: 
\begin{equation}
\begin{split}\label{eq: def of I and I'}
 &I \coloneqq (e_k(y_1,y_2,\ldots,y_n) \mid 1\le k\le n ), \\
 &I' \coloneqq ((y_i-y_{i+1})e_k(y_1,\ldots,y_i) \mid 1\le i\le n-1,\ 1\le k\le \min\{i,n-i\}). 
 \end{split}
\end{equation}

\vspace{10pt}

\begin{example}\label{eg: n=4}
{\rm
Let $n=4$. The ideal $I$ of $\Z[y_1,y_2,y_3,y_4]$ is generated by 
\begin{align}\label{eq: generator list 1}
 e_1(y_1,y_2,y_3,y_4), \ e_2(y_1,y_2,y_3,y_4), \ e_3(y_1,y_2,y_3,y_4), \ e_4(y_1,y_2,y_3,y_4), 
\end{align}
and the ideal $I'$ is generated by 
\begin{align}\label{eq: generator list 2}
 (y_1-y_2)y_1, \
 (y_2-y_3)(y_1+y_2), \ (y_2-y_3)y_1y_2, \ 
 (y_3-y_4)(y_1+y_2+y_3). 
\end{align}}
\end{example}

\vspace{10pt}

From what we saw above, it is clear that the map $\RP$ sends the ideal $I$ to $0$ in $H^*(\Pet{n};\Z)$.
It follows that $\RP$ also sends $I'$ to $0$.
To see that,
it suffices to show that
\begin{align}\label{eq: fundamental relation in Pet}
 (x_i-x_{i+1})e_k(x_1,\ldots,x_i) =0 
 \quad \text{$(1\le k\le i\le n-1)$}
\end{align}
in $H^*(\Pet{n};\Z)$, where we note that the range of $k$ is larger than that of in \eqref{eq: def of I and I'}.
For this purpose, let $1\le k\le i\le n-1$.
By Proposition~\ref{prop: presentation 10}, we have 
\begin{align*}
 k! e_k(x_1,\ldots,x_i) 
 = \varpi_{i-k+1}\varpi_{i-k+2}\cdots\varpi_{i} .
\end{align*}
Recalling that $x_i-x_{i+1}=\alpha_i$, we obtain
\begin{align*}
 k! (x_i-x_{i+1}) e_k(x_1,\ldots,x_i) 
 = \alpha_i \varpi_{i-k+1}\varpi_{i-k+2}\cdots\varpi_{i} .
\end{align*}
The right hand side of this equality is $0$
by \eqref{eq: alphapi=0} so that we obtain
\begin{align*}
 k! (x_i-x_{i+1})e_k(x_1,\ldots,x_i) = 0.
\end{align*}
Since $H^*(\Pet{n};\Z)$ is torsion free by Proposition~\ref{prop: known things 0 Pet}, this implies the equality \eqref{eq: fundamental relation in Pet}. 
Hence, $\RP$ sends $I'$ to $0$, as we claimed above.

\begin{remark}
{\rm
Geometric meaning of the relation \eqref{eq: fundamental relation in Pet} can be explained as follows. 
By Corollary~\ref{cor: presentation 20}, it can be expressed as 
\begin{align*}
 \alpha_{i} \cdot \varpi_J =0 ,
\end{align*}
where we set $J=\{i-k+1,i-k+2,\ldots,i\}$. 
In \cite{AHKZ}, two kinds of closed subsets $X_J(=\Pet{J})$ and $\Omega_J$ in $\Pet{n}$ are introduced, and this equality can be explained from the corresponding geometric equality 
\begin{align*}
 X_{\{i\}} \cap \Omega_{J} = \emptyset 
\end{align*}
by an argument similar to that in the proof of \cite[Lemma~4.7]{AHKZ}.
See \cite[Sect.\ 3 and 4]{AHKZ} for details.
}
\end{remark}

\vspace{10pt}

Since the map $\phi$ sends both of $I$ and $I'$ to $0$, it induces a surjective ring homomorphism
\begin{align*}
 \overline{\RP} \colon  \Z[y_1,y_2,\ldots,y_n]/(I+I') \rightarrow H^*(\Pet{n};\Z)
\end{align*}
which sends $y_i$ to $x_i$ $(1\le i\le n)$.
Here, we use the same symbol $y_i$ for its image in the quotient $\Z[y_1,y_2,\ldots,y_n]/(I+I')$ by abusing notation. We adopt this notation in the rest of this paper.

The next claim gives Theorem~\ref{thm: B} in Section~\ref{sec: intro} which describes
the ring structure of $H^*(\Pet{n};\Z)$.

\begin{theorem}\label{thm: main theorem 2}
The induced homomorphism
\begin{align*}
 \overline{\RP} \colon  \Z[y_1,y_2,\ldots,y_n]/(I+I') \rightarrow H^*(\Pet{n};\Z)
\end{align*}
sending $y_i$ to $x_i= c_1((E_i/E_{i-1})^*|_{\Pet{n}}) $ $(1\le i\le n)$
is an isomorphism as graded rings, where $I$ and $I'$ are the ideals of $\Z[y_1,y_2,\ldots,y_n]$ defined in \eqref{eq: def of I and I'}.
\end{theorem}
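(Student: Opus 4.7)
Since $\overline{\RP}$ has already been established as a surjective graded ring homomorphism, it suffices to show that $A := \Z[y_1,\ldots,y_n]/(I+I')$ is generated as a $\Z$-module by the $2^{n-1}$ elements
\[
\tilde{\varpi}_J := \prod_{k=1}^m e_{|J_k|}(y_1, \ldots, y_{\max J_k}), \qquad J \subseteq [n-1],
\]
where $J = J_1 \sqcup \cdots \sqcup J_m$ denotes the decomposition into connected components. Granting this, Corollary~\ref{cor: presentation 20} gives $\overline{\RP}(\tilde{\varpi}_J) = \varpi_J$, and the composition
\[
\Z^{\oplus 2^{n-1}} \twoheadrightarrow A \xrightarrow{\overline{\RP}} H^*(\Pet{n};\Z),
\]
sending the $J$-th standard basis vector first to $\tilde{\varpi}_J$ and then to $\varpi_J$, is a surjection onto a free $\Z$-module of rank $2^{n-1}$ whose image equals the $\Z$-basis $\{\varpi_J\}$ of Theorem~\ref{thm: main thm of AHKZ}. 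Hence the composition is an isomorphism, which forces both arrows to be isomorphisms; in particular, $\overline{\RP}$ is an isomorphism of graded rings.

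The key observation is that all of the identities in Section~\ref{subsec: basis of Pet} (Lemmas~\ref{lem: presentation 10}, \ref{lem: presentation 20}, and Proposition~\ref{prop: presentation 10}) hold verbatim in $A$, since their proofs rely only on the relation $\alpha_j \varpi_j = (y_j - y_{j+1})(y_1+\cdots+y_j) = 0$, which is the $k=1$ case of the generators of $I'$. In particular, in $A$ we have
\[
\varpi_a \varpi_{a+1} \cdots \varpi_b = k! \, e_k(y_1,\ldots,y_b), \qquad 1 \le a \le b \le n-1,\quad k = b-a+1,
\]
which yields the \emph{merging identity} $\tilde{\varpi}_{[a,b]} \cdot \tilde{\varpi}_{[b+1,c]} = \binom{c-a+1}{b-a+1}\, \tilde{\varpi}_{[a,c]}$. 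Combining Lemma~\ref{lem: presentation 10} with the standard expansion $e_2(y_1,\ldots,y_{i+1}) = e_2(y_1,\ldots,y_i) + y_{i+1}\varpi_i$ also yields the \emph{square identity}
\[
\varpi_i^2 = e_2(y_1,\ldots,y_i) + e_2(y_1,\ldots,y_{i+1}) \quad\text{in } A,
\]
where the right-hand side equals $\tilde{\varpi}_{\{i-1,i\}} + \tilde{\varpi}_{\{i,i+1\}}$ under the natural conventions that $e_2(y_1) = 0$ at the left boundary and $e_2(y_1,\ldots,y_n) = 0 \in I$ at the right boundary.

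The main and most technical step is to deduce the spanning claim from these identities. The plan is to use $y_i = \varpi_i - \varpi_{i-1}$ (with $\varpi_0 = 0$ and $\varpi_n = 0$ in $A$, the latter via $e_1(y) = 0 \in I$) to express every element of $A$ as a polynomial in $\varpi_1,\ldots,\varpi_{n-1}$, and then to prove by induction on degree that every monomial $\varpi_1^{a_1}\cdots\varpi_{n-1}^{a_{n-1}}$ lies in the $\Z$-span of $\{\tilde{\varpi}_J\}$. When all $a_i \in \{0,1\}$, Proposition~\ref{prop: presentation 10} applied to each maximal run of consecutive indices expresses the monomial as $m_J \tilde{\varpi}_J$ for $J = \{i : a_i = 1\}$. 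When some $a_i \ge 2$, the square identity reduces $\varpi_i^2$ to a sum of two $\tilde{\varpi}_J$'s, and the merging identity, together with the direct factorization $\tilde{\varpi}_J \cdot \tilde{\varpi}_{J'} = \tilde{\varpi}_{J \sqcup J'}$ for $J, J'$ whose components are pairwise non-adjacent, reassembles the resulting products into further $\Z$-combinations of $\tilde{\varpi}_J$'s. The main obstacle is to organize this cascade so that no rational denominators appear — Proposition~\ref{prop: presentation 10} only furnishes $k!\,e_k$ rather than $e_k$ itself — but the precise integer binomial coefficients produced by the merging identity exactly absorb the factorials, making the entire reduction valid over $\Z$.
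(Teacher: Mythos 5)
Your reduction to the spanning claim --- that $A := \Z[y_1,\ldots,y_n]/(I+I')$ is spanned over $\Z$ by the classes $\tilde{\varpi}_J$, and that this forces $\overline{\RP}$ to be an isomorphism --- is exactly the paper's strategy (it is Proposition~\ref{prop: module generator in y} together with the paragraph preceding it), and your observation that Lemmas~\ref{lem: presentation 10}, \ref{lem: presentation 20} and Proposition~\ref{prop: presentation 10} transport to $A$ because they only use $(y_i-y_{i+1})\pi_i=0$ is also correct and used by the paper. The difficulty is entirely in proving the spanning claim, and there your plan has two genuine gaps.

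First, the merging identity $\tilde{\varpi}_{[a,b]}\tilde{\varpi}_{[b+1,c]} = \binom{c-a+1}{b-a+1}\tilde{\varpi}_{[a,c]}$ does not follow from the transported Proposition~\ref{prop: presentation 10}. That proposition gives, in $A$, the relation $\pi_a\cdots\pi_b = (b-a+1)!\,\tilde{\varpi}_{[a,b]}$; combining two such identities produces the merging formula only after cancelling factorials, which is not allowed because $A$ is not yet known to be torsion-free --- that $A$ is free is part of what the theorem asserts. The paper instead proves the merging identity by its own induction (Proposition~\ref{prop: joint lemma 2 in y}), after first extending the defining relations of $I'$ to a wider range of $k$ (Lemma~\ref{lem: presentation 60}) and deducing $(y_i - y_{i+1})\pi_{[a,b]}=0$ for $a\le i\le b$ (Lemma~\ref{lemma: other alpha i}); none of that machinery appears in your proposal.

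Second, and more seriously, the reduction cascade you describe does not close up. After one use of the square identity you must evaluate products of the form $\pi_i\cdot\pi_{[a,b]}$ with $a\le i\le b$: for instance, for $n\ge 4$ one has $\varpi_2^3 = \tilde{\varpi}_{\{1,2\}}\,\varpi_2 + \tilde{\varpi}_{\{2,3\}}\,\varpi_2 = \pi_{[1,2]}\pi_2 + \pi_{[2,3]}\pi_2$, and in both terms the extra index $2$ lies \emph{inside} the interval. Neither the merging identity (which requires $\pi_i$ to sit just outside $[a,b]$) nor the disjoint-factorization rule applies, and the square identity itself cannot be re-applied to such a product. This is precisely the situation handled by the paper's Proposition~\ref{prop: joint lemma in y}, namely $\pi_i\cdot\pi_{[a,b]} = (b-i+1)\pi_{[a-1,b]} + (i-a+1)\pi_{[a,b+1]}$ for $a\le i\le b$, which is proved by a second separate induction and is the real engine of the paper's reduction; your proposal has no analogue of it. Your closing sentence, that ``the integer binomial coefficients produced by the merging identity exactly absorb the factorials,'' is an expression of hope rather than an argument: it does not specify which identity is applied where, nor why the resulting coefficients stay integral. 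Without a substitute for Proposition~\ref{prop: joint lemma in y} the reduction does not terminate in a $\Z$-linear combination of $\tilde{\varpi}_J$'s.
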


\vspace{10pt}

The rest of this paper is devoted for the proof of Theorem~\ref{thm: main theorem 2}.

\begin{remark}
The relation \eqref{eq: fundamental relation in Pet} for $k=1$ takes the form
\begin{align*}
 \alpha_i\cdot \varpi_i =
 (x_i-x_{i+1})(x_1+\cdots+x_i) = 0 \quad \text{in $H^*(\Pet{n};\Z)$}  
\end{align*}
which appears as the fundamental relations of the presentation of the cohomology ring $H^*(\Pet{n};\C)$ in \cite[Corollary~3.4]{fu-ha-ma} and \cite[Theorem 4.1]{ha-ho-ma} $($cf.\ \cite[Remark~4.8]{AHKZ}$)$.
\end{remark}

\vspace{10pt}

\begin{remark}
Let $n=4$. If we remove $(y_2-y_3)y_1y_2$ from the list \eqref{eq: generator list 2} of the generators of $I'$, then the quotient ring $\Z[y_1,y_2,y_3,y_4]/(I+I')$ is not isomorphic to $H^*(\Pet{4};\Z)$ since $(y_2-y_3)y_1y_2$ gives a non-zero $2$-torsion element of the quotient ring. This can be verified by a computer assisted calculation. Similarly, if we remove $e_k(y_1,y_2,y_3,y_4)$ for some $1\le k\le 4$ from the list \eqref{eq: generator list 1} of the generators of $I$, then the quotient ring $\Z[y_1,y_2,y_3,y_4]/(I+I')$ is not isomorphic to $H^*(\Pet{4};\Z)$. 
\end{remark}

\vspace{10pt}

Set
\begin{align*}
 M \coloneqq \Z[y_1,y_2,\ldots,y_n]/(I+I').
\end{align*}
If we can construct a subset $\{\pi_J \mid J\subseteq[n-1]\}$ of $M$ which generates $M$ as a $\Z$-module and satisfies $\overline{\RP}(\pi_J)=\varpi_J$ for $J\subseteq[n-1]$, then it follows that the map $\overline{\RP}$ is an isomorphism since $\varpi_J$ for $J\subseteq[n-1]$ form a $\Z$-basis of $H^*(\Pet{n};\Z)$ by Theorem~\ref{thm: main thm of AHKZ}.

Motivated by Corollary~\ref{cor: presentation 20}, 
we define $\pi_J \in M$ for each $J\subseteq[n-1]$ as follows.
For a subset $J\subseteq [n-1]$ having the decomposition $J=J_1\sqcup \cdots \sqcup J_m$ into the connected components (see section~\ref{subsec: comb on J}), we set
\begin{align}\label{eq: def of algebraic piJ}
 \pi_J 
 \coloneqq 
 \prod_{k=1}^m e_{|J_k|}(y_1,y_2,\ldots,y_{\max J_k}) \in M,
\end{align}
where we take the convention
\begin{align}\label{eq: def of algebraic piJ 2}
 \pi_{\emptyset}=1\in M.
\end{align}
We also define $\pi_J$ for all $J\subseteq \Z$ by taking the convention 
\begin{align}\label{eq: def of algebraic piJ 3}
 \pi_J = 0 \quad \text{unless $J\subseteq [n-1]$}.
\end{align}

\begin{example}
{\rm 
Let $n=7$. Then we have
\begin{align*}
 &\pi_{\{3,4,5\}} = e_3(y_1,y_2,y_3,y_4,y_5), \\
 &\pi_{\{2,4,5\}} = e_1(y_1,y_2)e_2(y_1,y_2,y_3,y_4,y_5)
\end{align*} 
in $M$ (cf.\ Example~\ref{eg: varpi=e}). We also have $\pi_{\{0,2\}}=0$ and $\pi_{\{4,5,7\}} = 0$ by \eqref{eq: def of algebraic piJ 3}.}
\end{example}

\vspace{10pt}

Recall that we have $\overline{\RP}(y_i)=x_i$ for $1\le i\le n$ by definition. Hence, it is clear that we have $\overline{\RP}(\pi_J)=\varpi_J$ for $J\subseteq[n-1]$ by Corollary~\ref{cor: presentation 20}.
Thus, to prove Theorem~\ref{thm: main theorem 2}, it is enough to show the following claim as we discussed above. 

\begin{proposition}\label{prop: module generator in y}
The $\Z$-module $M=\Z[y_1,y_2,\ldots,y_n]/(I+I')$ is generated by 
the subset $\{ \pi_J\mid J\subseteq[n-1]\}$. 
\end{proposition}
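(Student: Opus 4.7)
The strategy is to prove that the $\Z$-submodule $N \subseteq M$ generated by $\{\pi_J : J \subseteq [n-1]\}$ is all of $M$. Since $M$ is generated as a $\Z$-algebra by $y_1,\ldots,y_n$ and $\pi_\emptyset = 1 \in N$, it suffices to show that $N$ is stable under multiplication by each $y_i$; equivalently, that $y_i \cdot \pi_J \in N$ for every $i \in [n]$ and every $J \subseteq [n-1]$.

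A preliminary step is to observe that Proposition~\ref{prop: presentation 10} already holds in $M$, not merely in $H^*(\Pet{n};\Z)$: its proof uses only the $I'$-relations $(y_j - y_{j+1})(y_1+\cdots+y_j)=0$ together with the purely algebraic identities \eqref{eq: fund identity} and \eqref{eq: fund identity 2}; torsion-freeness of $H^*(\Pet{n};\Z)$ is not invoked. Hence the identity
\[
\pi_{\{a\}} \pi_{\{a+1\}} \cdots \pi_{\{b\}} = (b-a+1)!\cdot e_{b-a+1}(y_1,\ldots,y_b)
\]
holds in $M$, and in particular $\prod_{i\in J}\pi_{\{i\}} = m_J \cdot \pi_J$ in $M$.

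The main step is a Pieri-type rule: express $y_i \cdot \pi_J$ as a $\Z$-linear combination of $\pi_{J'}$'s. Writing $J = J_1 \sqcup \cdots \sqcup J_m$ with $J_k = \{a_k,\ldots,b_k\}$, I would argue by case analysis on the position of $i$ relative to the components. When $i = b_k + 1$ (extending $J_k$ on the right without merging into $J_{k+1}$), the defining recursion
\[
e_{\ell+1}(y_1,\ldots,y_i) = e_{\ell+1}(y_1,\ldots,y_{i-1}) + y_i\,e_\ell(y_1,\ldots,y_{i-1})
\]
rewrites $y_i \cdot e_{|J_k|}(y_1,\ldots,y_{b_k})$ as a difference of two elementary symmetric polynomials whose product with the remaining factors of $\pi_J$ gives two $\pi_{J'}$-terms. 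When $i$ lies inside or to the left of a component, the $I'$-relation $(y_i - y_{i+1}) e_k(y_1,\ldots,y_i) = 0$ is used to shift the index $y_i$ onto a position where it meets an existing component and the previous argument applies. When $i = n$, the relation $e_1(y_1,\ldots,y_n) = 0$ in $I$ yields $y_n = -\pi_{\{n-1\}}$ and reduces to $i \le n-1$.

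\textbf{Main obstacle.} The principal difficulty is that the $I'$-relations are available only in the range $1 \le k \le \min\{i,n-i\}$, so ``shifting'' a large-index $y_i$ using them can fail once $k > n-i$; one must then invoke the top-degree relations $e_k(y_1,\ldots,y_n) = 0$ from $I$ to compensate. I would organize the case analysis as a downward induction on $\max J$, combined with the bridge identity of Proposition~\ref{prop: presentation 10}, so that large-$i$ cases reduce to smaller instances where the $I'$-relations alone are enough. The routine elementary-symmetric expansions then finish the argument, and the claim $y_i \cdot \pi_J \in N$ follows for all $i$ and $J$.
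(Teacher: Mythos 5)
Your high-level strategy is the same as the paper's: exhibit the $\Z$-span $N$ of $\{\pi_J\}$ as stable under multiplication by the ring generators, and the key observation that Proposition~\ref{prop: presentation 10} (and more generally the preparatory lemmas) descend to $M$ is correct and also used by the paper, which explicitly re-proves them in $M$ (Lemma~\ref{lem: presentation 60}, Lemma~\ref{lemma: other alpha i}, Propositions~\ref{prop: joint lemma 2 in y} and~\ref{prop: joint lemma in y}). The genuine difference is your choice of multiplier: you work with $y_i \cdot \pi_J$ directly, whereas the paper changes variables to $\pi_i = \pi_{\{i\}} = y_1+\cdots+y_i$ (so $y_i = \pi_i - \pi_{i-1}$, $y_n = -\pi_{n-1}$) and proves the symmetric Pieri rule $\pi_i \cdot \pi_{[a,b]} = (b-i+1)\pi_{[a-1,b]} + (i-a+1)\pi_{[a,b+1]}$. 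The latter choice is more efficient because the shift lemma $(y_i - y_{i+1})\pi_{[a,b]}=0$ applies exactly when $a \le i \le b$, making $\pi_i$ interact with each component in a uniform way.

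The concrete gap in your sketch is the ``shifting'' step for an index $i$ that lies in a gap strictly between two components, say $b_k + 1 < i < a_{k+1}$. The relation $(y_i - y_{i+1})e_k(y_1,\ldots,y_i)=0$ does not directly annihilate $\pi_J$ in this situation: the factors $\pi_{J_\ell}$ with $\ell \ge k+1$ are elementary symmetric polynomials in $y_1,\ldots,y_{b_\ell}$ with $b_\ell > i$, and $(y_i - y_{i+1})$ does not kill these. For example with $J=\{1,4\}$ and $i=3$, one has $y_3\pi_J = 2\pi_{\{1,3,4\}} - 2\pi_{\{1,2,4\}}$, a difference of two terms, not something obtained by a single shift. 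Your plan would need a separate argument here, and the cleanest one essentially amounts to re-introducing $\pi_i$ (i.e., $y_i = \pi_{\{i\}} - \pi_{\{i-1\}}$) and applying the Pieri rule twice --- in other words, falling back to the paper's route. So while your proposal identifies the right tools (including the need to supplement $I'$ by the full $e_k(y_1,\ldots,y_n)=0$ relations, which is exactly Lemma~\ref{lem: presentation 60}), it underestimates the case analysis for $y_i$-multiplication; the $\pi_i$-multiplication the paper uses avoids this complication entirely.
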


We prove this in the next subsection.

\vspace{10pt}

\subsection{A Proof of Proposition \ref{prop: module generator in y}}
Before giving a proof of Proposition \ref{prop: module generator in y}, we first  establish some basic properties of $\pi_J$ for $J\subseteq[n-1]$. 
We begin with the following identity in $M$: 
\begin{equation}
\begin{split}\label{eq: elem 95}
 e_k(y_1,y_2,\ldots,y_n) = 
 \sum_{\substack{0\le p\le i,\ 0\le q\le n-i \\ p+q=k}} 
 e_{p}(y_1,y_2,\ldots,y_i) e_{q}(y_{i+1},y_{i+2},\ldots,y_n) 
\end{split}
\end{equation} 
for $1\le k\le i\le n$, where we take the convention $e_{0}=1$.
One may obtain this identity by decomposing the index set $[n]$ of monomials in the left hand side into two parts: $[n]=\{1,2,\ldots,i\}\sqcup\{i+1,i+2,\ldots,n\}$.

For $1\le i\le n-1$, recall from the definition of $M$ that we have
\begin{align*}
(y_i-y_{i+1}) e_k(y_1,\ldots,y_i) = 0
\qquad (1\le k\le \min\{i,n-i\}).
\end{align*} 
The following claim means that the same equalities hold for a wider range of $k$.

\begin{lemma}\label{lem: presentation 60}
For $1\le i\le n-1$, we have
\begin{align*}
(y_i-y_{i+1}) e_k(y_1,\ldots,y_i) = 0 
\qquad (1\le k\le i)
\end{align*} 
in $M$.
\end{lemma}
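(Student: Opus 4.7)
The plan is a straightforward induction on $k$, using the identity \eqref{eq: elem 95} to bootstrap from the generators of $I'$. If $i\le n/2$, then $\min\{i,n-i\}=i$ and the equality $(y_i-y_{i+1})e_k(y_1,\ldots,y_i)=0$ is already a generator of $I'$ for every $1\le k\le i$, so there is nothing to prove in this case. The substantive case is $i>n/2$, where $n-i<i$ and the relation is known from $I'$ only for the range $1\le k\le n-i$; I would extend it to $n-i<k\le i$ by induction on $k$, with base case $k=n-i$ already in hand.

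For the inductive step I would use the fact that $e_k(y_1,\ldots,y_n)\in I$ is zero in $M$, together with the splitting \eqref{eq: elem 95}, to write
\begin{align*}
0 \;=\; \sum_{\substack{p+q=k \\ 0\le p\le i,\ 0\le q\le n-i}} e_p(y_1,\ldots,y_i)\,e_q(y_{i+1},\ldots,y_n).
\end{align*}
Multiplying both sides by $(y_i-y_{i+1})$, the summation index runs over $k-(n-i)\le p\le k$, and the lower bound $k-(n-i)\ge 1$ uses precisely the assumption $k>n-i$. Hence every $p$ occurring in the sum satisfies $1\le p\le k$, so by the inductive hypothesis each term with $p<k$ vanishes; the only surviving term is the one with $p=k$ and $q=0$, which equals $(y_i-y_{i+1})e_k(y_1,\ldots,y_i)$. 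This yields the desired equality.

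The essentially only subtle point to check is that the troublesome index $p=0$, which would produce the uncontrolled term $(y_i-y_{i+1})e_k(y_{i+1},\ldots,y_n)$, never appears in the sum. This exclusion is automatic exactly when $k>n-i$, which is precisely the range complementary to the base case $k\le n-i$ covered directly by $I'$; thus the two-sided bound $k\le\min\{i,n-i\}$ built into $I'$ is perfectly tuned so that the induction closes.
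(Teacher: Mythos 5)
Your proof is correct and follows essentially the same route as the paper: both split off the range $1\le k\le n-i$ (handled directly by the generators of $I'$), multiply $e_k(y_1,\dots,y_n)=0$ by $y_i-y_{i+1}$, expand via \eqref{eq: elem 95}, and observe that $k>n-i$ forces $p\ge 1$ in every term, so the inductive hypothesis kills everything except $p=k$, $q=0$. The paper phrases the case split on $k$ rather than on $i$ versus $n/2$, but this is only a cosmetic difference.
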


\begin{proof}
If $1\le k\le n-i$, then we have $k\le\min\{i,n-i\}$ (since $k\le i$), and the claim holds by the definition of $M$ as we saw above.

If $n-i<k\le i$, then we prove the claim by induction on $k$, where we note that  we already verified the claim for $1\le k\le n-i$. Hence, we assume by induction that the claim holds when $k\le \ell$ for some positive integer $\ell$ satisfying $n-i\le \ell<i$, and 
we prove the claim when $k=\ell+1$.
Since we have $e_k(y_1,\ldots,y_n)=0$ by the definition $M$, we know that
\begin{align*}
&(y_i-y_{i+1}) e_k(y_1,\ldots,y_n) = 0.
\end{align*} 
By \eqref{eq: elem 95}, this equality can be written as
\begin{align*}
\sum_{\substack{0\le p\le i,\ 0\le q\le n-i \\ p+q=k}} 
(y_i-y_{i+1}) e_{p}(y_1,\ldots,y_i) e_{q}(y_{i+1},\ldots,y_n)=0.
\end{align*} 
Because of the condition $1\le k\le i$, it follows that the summand for $q=0$ (i.e., $p=k$) appears in the left hand side of this equality. Thus, we can separate it to obtain
\begin{align*}
(y_i-y_{i+1}) e_k(y_1,\ldots,y_i)
+ 
\sum_{\substack{0\le p\le i,\ 1\le q\le n-i \\ p+q=k}} 
(y_i-y_{i+1}) e_{p}(y_1,\ldots,y_i) e_{q}(y_{i+1},\ldots,y_n) =0.
\end{align*} 
In the second summand, we have $p=k-q>(n-i)-q\ge0$
since we are considering the case $n-i<k\le i$.
This implies that $p\ge1$ in the second summand. 
Noticing that $1\le p < k$, the inductive hypothesis implies that
the second summand vanishes. Thus, we obtain
\begin{align*}
(y_i-y_{i+1}) e_k(y_1,\ldots,y_i) =0.
\end{align*} 
\end{proof}

\vspace{10pt}

For non-negative integers $a$ and $b$, we use the notation
\begin{align*}
 [a,b] \coloneqq \{c\in\Z \mid a\le c\le b\}
\end{align*}
in what follows. For example, we have $[2,5]=\{2,3,4,5\}$,  $[3,2]=\emptyset$, and $[0,1]=\{0,1\}$ so that
\begin{align*}
 \pi_{[2,5]}= e_4(y_1,y_2,\ldots,y_5), \quad \pi_{[3,2]} = 1, \quad \text{and} \quad 
 \pi_{[0,1]} = 0
\end{align*}
by \eqref{eq: def of algebraic piJ}, \eqref{eq: def of algebraic piJ 2}, and \eqref{eq: def of algebraic piJ 3}, respectively.
Noticing that we have
\begin{align*}
e_{k}(y_1,\ldots,y_{b}) = e_{k}(y_1,\ldots,y_{b-1}) + e_{k-1}(y_1,\ldots,y_{b-1})y_{b} \quad (1\le k\le b\le n-1), 
\end{align*} 
it follows that
\begin{align}\label{eq: elem 50 2}
\pi_{[a,b]}= \pi_{[a-1,b-1]} + \pi_{[a,b-1]}y_{b}\quad (1\le a\le b\le n-1)
\end{align} 
since we have $\pi_{[a,b]}=e_{b-a+1}(y_1,y_2,\ldots,y_b)$.
When $a=b$, we obtain
\begin{align*}
\pi_{[a,a]}= \pi_{[a-1,a-1]} + y_{a}\quad (1\le a\le n-1)
\end{align*} 
since $\pi_{[a,a-1]}=\pi_{\emptyset}=1$.
The next claim generalizes Lemma~\ref{lem: presentation 60}.

\vspace{10pt}

\begin{lemma}\label{lemma: other alpha i}
For  $1\le a\le b\le n-1$, we have
\begin{align}\label{eq: joint lemma 2 in y by pi 4}
 (y_i-y_{i+1}) \cdot \pi_{[a,b]} = 0
 \qquad (a\le  i\le  b)
\end{align} 
in $M$.
\end{lemma}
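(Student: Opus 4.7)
I would prove the lemma by induction on $b$, leveraging the recursion \eqref{eq: elem 50 2} to step the pair of indices $(a,b)$ down to $(a-1,b-1)$ and $(a,b-1)$, while the boundary case $i=b$ is already handled by the previously established Lemma~\ref{lem: presentation 60}.

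For the base case $b=1$, the only option is $a=i=1$, and the claim $(y_1-y_2)\pi_{[1,1]}=(y_1-y_2)y_1=0$ is a defining relation of $I'$. For the inductive step at $b\ge 2$, I would assume the claim for all smaller values of $b$ and split on $i$. When $i=b$, the claim reads $(y_b-y_{b+1})\,e_{b-a+1}(y_1,\ldots,y_b)=0$; since $1\le b-a+1\le b$, this is exactly Lemma~\ref{lem: presentation 60}. When $a\le i<b$, apply \eqref{eq: elem 50 2} to write
$$(y_i-y_{i+1})\pi_{[a,b]} \;=\; (y_i-y_{i+1})\pi_{[a-1,b-1]} \;+\; (y_i-y_{i+1})\pi_{[a,b-1]}\cdot y_b.$$
The second summand vanishes by the inductive hypothesis applied to the interval $[a,b-1]$, since $a\le i\le b-1$. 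For the first summand: if $a\ge 2$, the inductive hypothesis applied to $[a-1,b-1]$ does the job (valid since $a-1\le i\le b-1$); if $a=1$, then $\pi_{[0,b-1]}=0$ by the convention \eqref{eq: def of algebraic piJ 3} that $\pi_J=0$ whenever $J\not\subseteq[n-1]$.

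There is no real obstacle in the plan; the one bookkeeping point worth flagging is that at the boundary $a=1$, the recursion branch $\pi_{[a-1,b-1]}=\pi_{[0,b-1]}$ is killed by the convention rather than by the inductive hypothesis. Once this is noted, the argument is a clean two-step reduction sitting on top of Lemma~\ref{lem: presentation 60}.
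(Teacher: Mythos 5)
Your proof is correct and follows essentially the same route as the paper's: both apply the recursion \eqref{eq: elem 50 2} to split $(y_i-y_{i+1})\pi_{[a,b]}$ into the two terms involving $\pi_{[a-1,b-1]}$ and $\pi_{[a,b-1]}y_b$, both dispatch the boundary $i=b$ via Lemma~\ref{lem: presentation 60}, and both handle the edge $a=1$ through the convention \eqref{eq: def of algebraic piJ 3}. The paper chooses to induct on $b-i$ starting from $b-i=0$ rather than on $b$, but this is a cosmetic bookkeeping difference and the substance of the argument is identical.
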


\begin{proof}
When $i=b$, the claim follows by Lemma~\ref{lem: presentation 60} since we have 
\begin{align*}
 (y_i-y_{i+1})\pi_{[a,i]}=(y_i-y_{i+1})e_{i-a+1}(y_1,\ldots,y_{i})=0
\end{align*} 
in this case.
We prove the claim by  induction on $b-i\ge0$.
Let $\ell(<n-1)$ be a non-negative integer, and assume by induction that \eqref{eq: joint lemma 2 in y by pi 4} holds for $b-i=\ell$.
We prove that \eqref{eq: joint lemma 2 in y by pi 4} holds for $b-i=\ell+1(\ge1)$.
By \eqref{eq: elem 50 2}, we have
\begin{align}\label{eq: elem 300}
 (y_i-y_{i+1}) \cdot \pi_{[a,b]} 
 &=
 (y_i-y_{i+1}) \pi_{[a-1,b-1]} + (y_i-y_{i+1}) \pi_{[a,b-1]} y_b.
\end{align} 
We compute the right hand side by taking cases.
If $a=1$, then the first summand is equal to zero by \eqref{eq: def of algebraic piJ 3}, and the second summand is also equal to zero by the inductive hypothesis since $b-1\ge i$. If $a>1$, both summands are equal to zero by the inductive hypothesis. Thus, in either case, the right hand side of \eqref{eq: elem 300} is equal to zero so that we obtain $(y_i-y_{i+1}) \cdot \pi_{[a,b]} =0$.
\end{proof}

\vspace{10pt}

In particular, for $1\le a\le b\le n-1$, we obtain
\begin{align}\label{eq: joint lemma 2 in y by pi 5}
 y_i \pi_{[a,b]} = y_{b+1} \pi_{[a,b]}
 \qquad (a\le  i\le  b)
\end{align} 
in $M$ by applying \eqref{eq: joint lemma 2 in y by pi 4} repeatedly.

To state the next claim, let us recall a basic property of $\varpi_J$ in the cohomology $H^*(\Pet{n};\Z)$: it is clear from the definition \eqref{eq: def of varpiJ} that, for $1\le a\le i< b\le n-1$, we have
\begin{align*}
 \varpi_{[a,i]}\cdot \varpi_{[i+1,b]}
 = \frac{1}{(i-a+1)!} \cdot \frac{1}{(b-i)!} \cdot \varpi_{a}\varpi_{a+1}\cdots\varpi_{b}
 = \binom{b-a+1}{i-a+1} \varpi_{[a,b]}
\end{align*} 
in $H^*(\Pet{n};\Z)$, where $\binom{b-a+1}{i-a+1}$ is a binomial coefficient.
As the following claim shows, the analogous equalities hold in $M$ as well.

\vspace{10pt}

\begin{proposition}\label{prop: joint lemma 2 in y}
For $1\le a\le i<  b \le n-1$, we have
\begin{align}\label{eq: joint lemma 2 in y by pi 3}
 \pi_{[a,i]}\cdot \pi_{[i+1,b]}
 = \binom{b-a+1}{i-a+1} \pi_{[a,b]}
\end{align} 
in $M$.
\end{proposition}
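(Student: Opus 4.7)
The plan is to prove the identity by induction on $b$, keeping $a$ and $i$ free, and I write $k_1 = i-a+1$ for brevity.

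For the base case $b = i+1$, the claim reduces to $\pi_{[a,i]}\cdot(y_1+\cdots+y_{i+1}) = (k_1+1)\pi_{[a,i+1]}$. I would split the second factor as $(y_1+\cdots+y_i) + y_{i+1}$ and expand $e_{k_1}(y_1,\ldots,y_i)(y_1+\cdots+y_i)$ via \eqref{eq: fund identity 2}, then rewrite the resulting $m_{2,1^{k_1-1}}(y_1,\ldots,y_i)$ as $k_1\, e_{k_1}(y_1,\ldots,y_i)\, y_{i+1}$ using the case $d=1$ of Lemma~\ref{lem: presentation 20}. Combining with $\pi_{[a,i]}\,y_{i+1}$ and invoking $e_{k_1+1}(y_1,\ldots,y_{i+1}) = e_{k_1+1}(y_1,\ldots,y_i) + e_{k_1}(y_1,\ldots,y_i)\,y_{i+1}$ yields $(k_1+1)\pi_{[a,i+1]}$, as required.

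For the inductive step with $b \ge i+2$, I would apply \eqref{eq: elem 50 2} to expand $\pi_{[i+1,b]} = \pi_{[i,b-1]} + \pi_{[i+1,b-1]}\,y_b$, then expand $\pi_{[a,i]} = \pi_{[a-1,i-1]} + \pi_{[a,i-1]}\,y_i$ inside the term $\pi_{[a,i]}\pi_{[i,b-1]}$. The key move is to use \eqref{eq: joint lemma 2 in y by pi 5} to replace $y_i\pi_{[i,b-1]}$ by $y_b\pi_{[i,b-1]}$, which enables me to factor $y_b$ out of the off-diagonal pieces. The result is
\begin{equation*}
\pi_{[a,i]}\pi_{[i+1,b]} = \pi_{[a-1,i-1]}\pi_{[i,b-1]} + \bigl(\pi_{[a,i-1]}\pi_{[i,b-1]} + \pi_{[a,i]}\pi_{[i+1,b-1]}\bigr) y_b.
\end{equation*}
Applying the inductive hypothesis to each of the three $\pi\cdot\pi$-products rewrites them as binomial multiples of $\pi_{[a-1,b-1]}$ or $\pi_{[a,b-1]}$. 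Pascal's identity $\binom{b-a}{i-a} + \binom{b-a}{i-a+1} = \binom{b-a+1}{i-a+1}$ then collapses the parenthesized sum into $\binom{b-a+1}{i-a+1}\pi_{[a,b-1]}\,y_b$, and a final use of \eqref{eq: elem 50 2} in the form $\pi_{[a,b]} = \pi_{[a-1,b-1]} + \pi_{[a,b-1]}\,y_b$ completes the identification with $\binom{b-a+1}{i-a+1}\pi_{[a,b]}$.

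The main obstacle I anticipate is the bookkeeping of boundary cases when invoking the inductive hypothesis: when $a=1$ the factor $\pi_{[a-1,i-1]} = \pi_{[0,i-1]}$ must be read as $0$ via \eqref{eq: def of algebraic piJ 3}, and when $i=a$ the factor $\pi_{[a,i-1]}$ must be read as $\pi_\emptyset = 1$ via \eqref{eq: def of algebraic piJ 2}. The target formula extends correctly through these boundary ranges since the relevant binomial coefficients specialize in the right way ($\binom{b-a}{0}=1$ when $i=a$, and the $a=1$ case is absorbed by the vanishing of $\pi_{[0,b-1]}$), so no separate case analysis is required.
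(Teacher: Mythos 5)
Your proof is correct and follows essentially the same line as the paper's: induction on $b$, expansion via the recursion \eqref{eq: elem 50 2}, replacement of $y_i$ by $y_b$ via \eqref{eq: joint lemma 2 in y by pi 5}, and Pascal's identity to close. The one structural difference is where the base case sits. The paper inducts on $b\ge 2$ with base case $b=2$ (forcing $a=i=1$), a one-line check, and absorbs the degenerate situation $b=i+1$ into the inductive step: there $\pi_{[i+1,b-1]}=\pi_\emptyset=1$ and $\binom{b-a}{i-a+1}=1$, so no inductive call is needed. You instead take the entire family $b=i+1$ (all admissible $a$) as the base case, and since for $a>1$ the variables $y_1,\ldots,y_{a-1}$ cannot be pushed to $y_{i+1}$ by \eqref{eq: joint lemma 2 in y by pi 5}, you are forced to bring in \eqref{eq: fund identity 2} together with the $d=1$ case of Lemma~\ref{lem: presentation 20}. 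That is correct but heavier than the paper's route, which never invokes those two in this proof. One technical point to flag explicitly: \eqref{eq: fund identity 2} is stated only for $1\le k<i$, and when $a=1$ you apply it with $k=k_1=i$; the identity still holds there because $m_{1^{i+1}}(y_1,\ldots,y_i)=0$, but as written you are using it outside its stated range. Your handling of the $a=1$ and $a=i$ specializations via the conventions $\pi_{[0,\cdot]}=0$ and $\pi_\emptyset=1$ is sound and matches, in a more compact phrasing, the case analysis the paper performs; and applying \eqref{eq: joint lemma 2 in y by pi 5} before rather than after the inductive hypothesis is immaterial.
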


\begin{proof}
We prove the claim by induction on $b\ge2$.
When $b=2$, we have $a=i=1$ so that the claim is
\begin{align*}
 \pi_{[1,1]}\cdot \pi_{[2,2]}
 = 2 \pi_{[1,2]}.
\end{align*}
The left hand  side can be computed as
\begin{align*}
 \pi_{[1,1]}\cdot \pi_{[2,2]}
 = \pi_{[1,1]}\cdot \pi_{\{2\}}
 = \pi_{[1,1]}\cdot(y_1+y_2)
 = \pi_{[1,1]}\cdot 2y_2
 = 2\pi_{[1,2]},
\end{align*}
where the third equality follows from \eqref{eq: joint lemma 2 in y by pi 5}, and the fourth equality follows from \eqref{eq: elem 50 2} with $\pi_{[0,1]}=0$. Thus, we obtain the claim for $b=2$.

Let $2\le \ell< n-1$ be a positive integer, and assume by induction that the claim \eqref{eq: joint lemma 2 in y by pi 3} holds for $b=\ell$.
We prove the claim \eqref{eq: joint lemma 2 in y by pi 3} for $b=\ell+1$.
The left hand side of \eqref{eq: joint lemma 2 in y by pi 3} can be written as
\begin{align*}
\pi_{[a,i]}\cdot \pi_{[i+1,b]} 
&= \pi_{[a,i]} \pi_{[i,b-1]} + \pi_{[a,i]} \pi_{[i+1,b-1]} y_{b} 
\end{align*}
by \eqref{eq: elem 50 2}. Applying \eqref{eq: elem 50 2} again to $\pi_{[a,i]}$ in the first summand of the right hand side, we obtain
\begin{align}\label{eq: elem 50 5}
\pi_{[a,i]}\cdot \pi_{[i+1,b]} 
&= \pi_{[a-1,i-1]} \pi_{[i,b-1]}+\pi_{[a,i-1]}\pi_{[i,b-1]}y_i +\pi_{[a,i]} \pi_{[i+1,b-1]}y_{b} .
\end{align}
We now compute each summands of the right hand side separately. For the first summand, we have
\begin{align*}
 \pi_{[a-1,i-1]} \pi_{[i,b-1]}= \binom{b-a+1}{i-a+1}\pi_{[a-1,b-1]}
\end{align*}
which we prove by taking cases as follows.
If $a=1$, the claim is obvious since both sides are equal to $0$ by \eqref{eq: def of algebraic piJ 3}.
If $a>1$, then the claim follows by the inductive hypothesis.
For the second summand of the right hand side of \eqref{eq: elem 50 5}, we have
\begin{align*}
\pi_{[a,i-1]}\pi_{[i,b-1]}y_i = \binom{b-a}{i-a}\pi_{[a,b-1]}y_i 
\end{align*}
which we prove by taking cases as follows.
If $a=i$, the claim is obvious since both sides are equal to $\pi_{[i,b-1]}y_i$ by \eqref{eq: def of algebraic piJ 2}.
If $a<i$, then the claim follows by the inductive hypothesis.
For the third summand of the right hand side of \eqref{eq: elem 50 5}, we have
\begin{align*}
\pi_{[a,i]} \pi_{[i+1,b-1]}y_{b}
= \binom{b-a}{i-a+1} \pi_{[a,b-1]}y_{b} 
\end{align*}
which we prove by taking cases as follows.
If $b=i+1$, the claim is obvious since both sides are equal to $\pi_{[a,i]}y_b$ by \eqref{eq: def of algebraic piJ 2}.
If $b>i+1$, then the claim follows by the inductive hypothesis.

Combining the above computations of the summands of the right hand side of \eqref{eq: elem 50 5}, we obtain
\begin{align*}
\pi_{[a,i]}\cdot \pi_{[i+1,b]} = \binom{b-a+1}{i-a+1}\pi_{[a-1,b-1]}+\binom{b-a}{i-a}\pi_{[a,b-1]}y_i +\binom{b-a}{i-a+1} \pi_{[a,b-1]}y_{b} .
\end{align*}
By applying \eqref{eq: joint lemma 2 in y by pi 5} to the second summand of the right hand side, it follows that
\begin{align*}
\pi_{[a,i]}\cdot \pi_{[i+1,b]} 
&= \binom{b-a+1}{i-a+1}\pi_{[a-1,b-1]}+\binom{b-a}{i-a}\pi_{[a,b-1]}y_b +\binom{b-a}{i-a+1} \pi_{[a,b-1]}y_{b} \\
&= \binom{b-a+1}{i-a+1}\pi_{[a-1,b-1]}+\binom{b-a+1}{i-a+1}\pi_{[a,b-1]}y_{b} \\
&= \binom{b-a+1}{i-a+1}\pi_{[a,b]}, 
\end{align*}
where we used \eqref{eq: elem 50 2} for the last equality. This completes the proof.
\end{proof}

\vspace{40pt}

For simplicity, we write
\begin{align}\label{eq: simplicity of index}
 \pi_i \coloneqq \pi_{\{i\}} = y_1+y_2+\cdots+y_i \quad (1\le i\le n-1)
\end{align}
(cf.\ \eqref{eq: def of varpi i}).
As for the previous proposition, the following is an analogue of \cite[Lemma~5.1]{AHKZ} which is a claim for $\varpi_J$ in the cohomology $H^*(\Pet{n};\Z)$.

\begin{proposition}\label{prop: joint lemma in y}
For $1\le a\le i\le b \le n-1$, we have
\begin{align}\label{eq: joint lemma in y by pi 2}
&\pi_i \cdot \pi_{[a,b]}
= (b-i+1) \pi_{[a-1,b]} + (i-a+1) \pi_{[a,b+1]}
\end{align} 
in $M$ with the convention $\pi_{[0,b]}=\pi_{[a,n]}=0$ $($See \eqref{eq: def of algebraic piJ 3}$)$.
\end{proposition}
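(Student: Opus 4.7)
The plan is to reduce the identity to two simpler facts and then induct downward on $i$ from the base case $i=b$. First, I would establish a single-variable multiplication rule: for $1 \le a \le b \le n-1$ and $a \le i \le b$,
$$y_i \pi_{[a,b]} = \pi_{[a,b+1]} - \pi_{[a-1,b]}$$
in $M$. By \eqref{eq: joint lemma 2 in y by pi 5}, $y_i \pi_{[a,b]} = y_{b+1}\pi_{[a,b]}$. For $b \le n-2$ this is immediate by rearranging \eqref{eq: elem 50 2}. For $b = n-1$ the standard expansion gives $e_{n-a+1}(y_1,\ldots,y_n) = \pi_{[a-1,n-1]} + y_n \pi_{[a,n-1]}$, which equals $0$ in $M$ because it lies in $I$; this yields $y_n\pi_{[a,n-1]} = -\pi_{[a-1,n-1]}$, consistent with the convention $\pi_{[a,n]} = 0$.

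For the base case $i=b$, I would apply \eqref{eq: fund identity 2} with $k = b-a+1$:
$$\pi_b \cdot \pi_{[a,b]} = m_{2,1^{b-a}}(y_1,\ldots,y_b) + (b-a+2)\,\pi_{[a-1,b]}.$$
The proofs of Lemmas~\ref{lem: presentation 10} and~\ref{lem: presentation 20} only rely on the relation $(y_j - y_{j+1})\pi_j = 0$ (which lies in $I'$) together with purely combinatorial expansions, so they carry over verbatim from $H^*(\Pet{n};\Z)$ to $M$. The $d=1$ case of Lemma~\ref{lem: presentation 20} then gives $m_{2,1^{b-a}}(y_1,\ldots,y_b) = (b-a+1)\pi_{[a,b]}\, y_{b+1}$, and the single-variable rule above converts $y_{b+1}\pi_{[a,b]}$ into $\pi_{[a,b+1]} - \pi_{[a-1,b]}$. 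Combining the terms yields
$$\pi_b \cdot \pi_{[a,b]} = \pi_{[a-1,b]} + (b-a+1)\,\pi_{[a,b+1]},$$
which is the desired formula at $i = b$.

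The downward induction is then mechanical. Writing $\pi_{i-1} = \pi_i - y_i$ and applying both facts above, if the formula holds for some $i$ with $a < i \le b$, then
$$\pi_{i-1}\pi_{[a,b]} = \pi_i\pi_{[a,b]} - y_i\pi_{[a,b]} = (b-i+2)\,\pi_{[a-1,b]} + (i-a)\,\pi_{[a,b+1]},$$
which is \eqref{eq: joint lemma in y by pi 2} with $i$ replaced by $i-1$. Iterating from $i=b$ down to $i=a$ completes the proof.

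The main obstacle I anticipate is handling the boundary case $b = n-1$ consistently with the convention $\pi_{[a,n]} = 0$. This is precisely where the ideal $I$ (as opposed to $I'$) enters the argument: the recursion \eqref{eq: elem 50 2} is unavailable at the right endpoint, and must be replaced by the defining relation $e_{n-a+1}(y_1,\ldots,y_n) = 0$ in $M$. Once this is treated carefully inside the single-variable rule, the remaining steps go through uniformly in $a$ and $b$.
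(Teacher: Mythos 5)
Your proof is correct, and it takes a genuinely cleaner route than the paper in the inductive step. Both you and the paper establish the same base case $i=b$: you expand $\pi_b\,\pi_{[a,b]}$ via \eqref{eq: fund identity 2}, convert $m_{2,1^{b-a}}(y_1,\dots,y_b)$ using the $d=1$ case of Lemma~\ref{lem: presentation 20}, and then rewrite $y_{b+1}\pi_{[a,b]}$ as $\pi_{[a,b+1]}-\pi_{[a-1,b]}$ (this is precisely the paper's identity~\eqref{eq: taking cases 6}, with the case $b=n-1$ handled via the relation $e_{n-a+1}(y_1,\dots,y_n)=0$ in $I$). One small point: \eqref{eq: fund identity 2} as stated requires $k<i$, so for $a=1$ (where $k=b$ equals the number of variables) you should note that the identity still holds trivially since $m_{1^{b+1}}(y_1,\dots,y_b)=0$; the paper records this as the range $1\le k\le b$ in~\eqref{eq: elem 200}. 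The real divergence is the induction: the paper inducts on $b-i$ by expanding $\pi_{[a,b]}=\pi_{[a-1,b-1]}+\pi_{[a,b-1]}y_b$ via~\eqref{eq: elem 50 2}, applying the inductive hypothesis to $\pi_i\pi_{[a-1,b-1]}$ and $\pi_i\pi_{[a,b-1]}$, and then recombining the terms with several boundary-case distinctions. You instead extract the single-variable rule $y_i\pi_{[a,b]}=\pi_{[a,b+1]}-\pi_{[a-1,b]}$ once (a packaging of \eqref{eq: joint lemma 2 in y by pi 5} plus \eqref{eq: taking cases 6}), and then the downward step from $i$ to $i-1$ is the one-line calculation $\pi_{i-1}\pi_{[a,b]}=\pi_i\pi_{[a,b]}-y_i\pi_{[a,b]}$, which immediately shifts the coefficients $(b-i+1,i-a+1)$ to $(b-i+2,i-a)$. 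This avoids all of the re-expansion and boundary casework in the paper's inductive step, concentrating the case analysis ($b=n-1$ versus $b<n-1$, $a=1$ versus $a>1$) entirely in the single-variable rule, where it is naturally absorbed by the conventions $\pi_{[0,b]}=\pi_{[a,n]}=0$. Your version is both shorter and more transparent about where the ideal $I$ (as opposed to $I'$) is actually used.
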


Before giving a proof, we recall that
the identity \eqref{eq: fund identity 2} was obtained simply by rearranging terms, without using any non-trivial relations between the cohomology classes $x_1,x_2,\ldots,x_n$. Thus, it follows that the same identity holds for $y_1,y_2,\ldots,y_n$ in $M$ as well:
\begin{equation}\label{eq: elem 200}
\begin{split}
&(y_1+y_2+\cdots+y_b) \cdot e_k(y_1,y_2,\ldots,y_b) \\
&\hspace{30pt}= m_{2,1^{k-1}}(y_1,y_2,\ldots,y_{b}) + (k+1) m_{1^{k+1}}(y_1,y_2,\ldots,y_b) \\
&\hspace{30pt}= m_{2,1^{k-1}}(y_1,y_2,\ldots,y_{b}) + (k+1) e_{k+1}(y_1,y_2,\ldots,y_b) 
\end{split}
\end{equation}
for $1\le k\le b\le n$.
We also recall that the claim of Lemma~\ref{lem: presentation 20} was derived only by the relations $\alpha_i\varpi_i=0$ $(1\le i\le n-1)$\footnote{In section~\ref{subsec: basis of Pet}, the condition that $H^*(\Pet{n};\Z)$ is torsion free was used only  in the proof of Corollary~\ref{cor: presentation 20}.}. 
By the definitions of $M$ and $I'$, we have the corresponding relations $(y_i-y_{i+1})\pi_i=0$ $(1\le i\le n-1)$ in $M$.
Thus, the claim of Lemma~\ref{lem: presentation 20} holds for $y_1,\ldots,y_n$ as well. 
Its claim for $d=1$ gives us that
\begin{align*}
m_{2,1^{k-1}}(y_1,\ldots,y_{b})=km_{1^{k}}(y_1,\ldots,y_{b}) y_{b+1}=k e_{k}(y_1,\ldots,y_{b}) y_{b+1}.
\end{align*}
Applying this to the last expression in \eqref{eq: elem 200}, we obtain  that
\begin{align*}
&(y_1+y_2+\cdots+y_b) \cdot e_k(y_1,y_2,\ldots,y_b)
\\
&\hspace{30pt}=
k e_{k}(y_1,\ldots,y_{b}) y_{b+1} + (k+1) e_{k+1}(y_1,y_2,\ldots,y_b) 
\end{align*}
for $1\le k\le b\le n$. This can be expressed as
\begin{equation}\label{eq: elem 200 3}
\pi_b \cdot \pi_{[a,b]}
= (b-a+1) \pi_{[a,b]} y_{b+1} + (b-a+2) \pi_{[a-1,b]}
\qquad (1\le a\le b\le n).
\end{equation}
\vspace{0pt}

\begin{proof}[Proof of Proposition~$\ref{prop: joint lemma in y}$]
We first prove the equality for the case $i=b$. In this case, the claim is 
\begin{align}\label{eq: case i=b}
\pi_b \cdot \pi_{[a,b]} = \pi_{[a-1,b]} + (b-a+1)\pi_{[a,b+1]}.
\end{align} 
The left hand side is equal to 
\begin{align*}
\pi_b \cdot \pi_{[a,b]}
= (b-a+1) \pi_{[a,b]} y_{b+1} + (b-a+2) \pi_{[a-1,b]}
\end{align*}
by \eqref{eq: elem 200 3}.
Thus, we obtain
\begin{equation}
\begin{split}\label{eq: taking cases 5}
\pi_b \cdot \pi_{[a,b]}
&= (b-a+1) \pi_{[a,b]} y_{b+1} + (b-a+2) \pi_{[a-1,b]}\\
&= \pi_{[a-1,b]} + (b-a+1)\Big(\pi_{[a-1,b]} + \pi_{[a,b]} y_{b+1} \Big).
\end{split}
\end{equation} 
In the parenthesis of the last expression, we have
\begin{align}\label{eq: taking cases 6}
 \pi_{[a-1,b]} + \pi_{[a,b]} y_{b+1}=\pi_{[a,b+1]}
\end{align}
which we prove by taking cases.
If $b<n-1$, it is obvious by \eqref{eq: elem 50 2}. If $b=n-1$, it can be shown as
\begin{align*}
 \pi_{[a-1,b]} + \pi_{[a,b]} y_{b+1}
 &= 
 \pi_{[a-1,n-1]} + \pi_{[a,n-1]}y_{n}\\
 &= e_{n-a+1}(y_1,y_2,\ldots,y_{n-1}) + e_{n-a}(y_1,y_2,\ldots,y_{n-1})y_n \\
 &=  e_{n-a+1}(y_1,y_2,\ldots,y_{n}) \\
 &= 0 \qquad (\text{by the definitions of $M$ and $I$}) \\
 &= \pi_{[a,b+1]} \qquad (\text{by $b+1=n$ and \eqref{eq: def of algebraic piJ 3}}) 
\end{align*} 
in this case as  well.
Thus, we obtain from \eqref{eq: taking cases 5} and \eqref{eq: taking cases 6} that
\begin{align*}
\pi_b \cdot \pi_{[a,b]} = \pi_{[a-1,b]}+ (b-a+1)\pi_{[a,b+1]} .
\end{align*} 
This verifies \eqref{eq: case i=b} which is the desired claim \eqref{eq: joint lemma in y by pi 2} for the case $i=b$. 

Now we prove the claim \eqref{eq: joint lemma in y by pi 2} by induction on $b-i\ge0$. 
Let $\ell(<n-1)$ be a non-negative integer, and assume by induction that \eqref{eq: joint lemma in y by pi 2} holds when $b-i=\ell$. We prove \eqref{eq: joint lemma in y by pi 2} for the case $b-i=\ell+1(\ge1)$.
In this case, the left hand side of \eqref{eq: joint lemma in y by pi 2} can be computed as
\begin{align}\label{eq: induction taking cases}
\pi_i \cdot \pi_{[a,b]}
= \pi_i \pi_{[a-1,b-1]} + \pi_i \pi_{[a,b-1]}y_{b}
\end{align}
by \eqref{eq: elem 50 2}. We compute each summands of the right hand side separately.
For the first summand, we have
\begin{align*}
\pi_i \pi_{[a-1,b-1]}
= (b-i) \pi_{[a-2,b-1]} + (i-a+2) \pi_{[a-1,b]} 
\end{align*}
which we prove by taking cases as follows.
If $a=1$, then both sides are equal to $0$ by \eqref{eq: def of algebraic piJ 3}. If $a>1$, then the claim  follows by the inductive hypothesis since $b-1\ge i$.
For the second summand in the right hand side of \eqref{eq: induction taking cases},  we  have
\begin{align*}
\pi_i \pi_{[a,b-1]}y_{b}
= \Big( (b-i) \pi_{[a-1,b-1]} + (i-a+1) \pi_{[a,b]} \Big) y_{b}
\end{align*}
by the inductive hypothesis since $b-1\ge i$.
Combining the computations of the summands of the right hand side of \eqref{eq: induction taking cases}, we obtain
\begin{equation}
\begin{split}\label{eq: taking cases 2}
\pi_i \cdot \pi_{[a,b]}
&= \Big( (b-i) \pi_{[a-2,b-1]} + (i-a+2) \pi_{[a-1,b]} \Big) \\
&\hspace{50pt} +\Big((b-i) \pi_{[a-1,b-1]} + (i-a+1) \pi_{[a,b]} \Big)y_{b}  \\
&= (b-i)\Big( \pi_{[a-2,b-1]} + \pi_{[a-1,b-1]} y_{b}\Big) \\
&\hspace{50pt} +\pi_{[a-1,b]} \ +\ (i-a+1) \Big( \pi_{[a-1,b]} + \pi_{[a,b]}y_{b} \Big) \\
&= (b-i)\Big( \pi_{[a-2,b-1]} + \pi_{[a-1,b-1]} y_{b}\Big) \\
&\hspace{50pt} +\pi_{[a-1,b]} \ +\ (i-a+1) \Big( \pi_{[a-1,b]} + \pi_{[a,b]}y_{b+1} \Big),
\end{split}
\end{equation}
where we used \eqref{eq: joint lemma 2 in y by pi 5} to $\pi_{[a,b]}y_{b}$ for the last equality.
We now compute the first and  the third summands of the last expression separately. For the first summand, we have
\begin{align}\label{eq: taking cases 3}
 (b-i)\Big( \pi_{[a-2,b-1]} + \pi_{[a-1,b-1]} y_{b}\Big)  = (b-i) \pi_{[a-1,b]}
\end{align}
which we prove by taking cases as follows.
If $a>1$, then the claim is obvious by \eqref{eq: elem 50 2}.
If $a=1$, the claim follows since both sides are equal to $0$ by \eqref{eq: def of algebraic piJ 3}.
For the third summand of the last expression of \eqref{eq: taking cases 2}, we have
\begin{align}\label{eq: taking cases 4}
 (i-a+1) \Big( \pi_{[a-1,b]} + \pi_{[a,b]}y_{b+1} \Big)  = (i-a+1) \pi_{[a,b+1]}
\end{align}
by \eqref{eq: taking cases 6}.
Applying \eqref{eq: taking cases 3} and \eqref{eq: taking cases 4} to the last expression of \eqref{eq: taking cases 2}, we obtain
\begin{align*}
\pi_i \cdot \pi_{[a,b]}
= (b-i+1) \pi_{[a-1,b]} +(i-a+1) \pi_{[a,b+1]}.
\end{align*}
This verifies \eqref{eq: joint lemma in y by pi 2} for the case $b-i=\ell+1$, as desired.
\end{proof}

\vspace{10pt}

We now prove Proposition~\ref{prop: module generator in y} which completes the proof of Theorem~\ref{thm: main theorem 2} as we discussed in section~\ref{subsec: ring presentation for Pet}.

\begin{proof}[Proof of Proposition~$\ref{prop: module generator in y}$]
Let $f\in M$ be a polynomial in $y_1,y_2,\ldots,y_n$.
We prove that $f$ can be written as a linear combination of $\pi_J$ for $J\subseteq[n-1]$. 
Recalling that $\pi_i = y_1+y_2+\cdots+y_i$, we have
\begin{align*}
 &y_i 
 = \pi_i - \pi_{i-1} 
 \quad (1\le i\le n-1), \\
 &y_n= -\pi_{n-1}
\end{align*}
with the convention $\pi_0=0$, 
where the second equality follows since we have an equality 
$y_1+y_2+\cdots+y_n=e_1(y_1,y_2,\ldots,y_n)=0$ in $M$.
Hence, we can express $f$ as a polynomial in 
\begin{align*}
 \pi_1 , \pi_2, \ldots,\pi_{n-1}.
\end{align*}
For our purpose, we may assume that $f$ is a monomial in these variables with coefficient $1$ without loss of generality. Namely, we have
\begin{align}\label{eq: monomial in truncated e1}
 f = \pi_{i_1}\pi_{i_2}\cdots \pi_{i_d}
\end{align}
for some $d\ge1$ and $1\le i_1\le i_2 \le \cdots \le i_d< n$.
We prove that this monomial can be expressed as a linear combination of $\pi_J$ for $J\subseteq[n-1]$.

First suppose that $d \le n-1$.
We prove the claim by induction on $d\ge1$.
When $d=1$, the claim is obvious since $\pi_{i_1}=\pi_{J}$ with $J=\{i_1\}$ by \eqref{eq: simplicity of index}.
Assume by induction that 
\begin{align*}
 \pi_{i_2}\cdots \pi_{i_d}= \sum_{J\subseteq[n-1]} c_{J} \pi_J 
\end{align*}
for some integers $c_J\ (J\subseteq[n-1])$. 
Then we have
\begin{align}\label{eq: final computations 20}
 f 
 = \pi_{i_1} \sum_{J\subseteq[n-1]} c_{J} \pi_J 
 = \sum_{J\subseteq[n-1]} c_{J} \pi_{i_1} \pi_J  .
\end{align}
It suffices to show that each product $\pi_{i_1} \pi_J$ is expanded as a linear combination of $\pi_L$ for $L\subseteq[n-1]$.
For this, we take the decomposition $J=J_1\sqcup \cdots \sqcup J_m$ into the connected components.
By \eqref{eq: def of algebraic piJ}, we have
\begin{align*}
 \pi_{i_1}\pi_J=\pi_{i_1} \cdot \pi_{J_1}\cdots \pi_{J_m}
\end{align*}
since each $J_k$ consists of a single connected component $(1\le k\le m)$.
Set $J' \coloneqq J\cup\{i_1\}$.
We consider the case for $J'\supsetneq J$ and the case for $J'=J$ separately.

If $J'\supsetneq J$, then we have $i_1\notin J$.
We denote by $J'(i_1)\subseteq J'$ the connected component of $J'$ containing $i_1$.
We have the following three cases:
\begin{itemize}
 \item[(1)] $J'(i_1)=\{i_1\}$, 
 \item[(2)] $J'(i_1)=J_k\cup\{i_1\}$ for some $1\le k\le m$, 
 \item[(3)] $J'(i_1)=J_k\cup\{i_1\}\cup J_{k+1}$ for some $1\le k< m$.
\end{itemize}
In the case (1), we have 
\begin{align*}
 \pi_{i_1}\pi_J = \pi_{i_1} \cdot \pi_{J_1}\cdots \pi_{J_m} = \pi_{J'}
\end{align*}
by the definition of $\pi_{J'}$ (see \eqref{eq: def of algebraic piJ}).
In the case (2), we have $i_1=\min J_k -1$ or $i_1=\max J_k +1$. In either case, we have
\begin{align*}
 \pi_{i_1}\pi_J 
 &= \pi_{J_1}\cdots \pi_{J_{k-1}}(\pi_{i_1} \pi_{J_k})\pi_{J_{k+1}}\cdots \pi_{J_m}\\
 &= \pi_{J_1}\cdots \pi_{J_{k-1}}\left( \binom{|J_k|+1}{1}\pi_{J'(i_1)}\right) \pi_{J_{k+1}}\cdots \pi_{J_m}
\end{align*}
by Proposition~\ref{prop: joint lemma 2 in y} and $\binom{|J_k|+1}{1}=\binom{|J_k|+1}{|J_k|}$. Hence, we obtain
\begin{align*}
 \pi_{i_1}\pi_J = \binom{|J_k|+1}{1}\pi_{J'}
\end{align*}
in this case.
In the case (3), we have
\begin{align*}
 \pi_{i_1}\pi_J 
 &= \pi_{J_1}\cdots \pi_{J_{k-1}}(\pi_{J_k}\cdot \pi_{i_1}\cdot \pi_{J_{k+1}})\pi_{J_{k+2}}\cdots \pi_{J_m}\\
 &= \pi_{J_1}\cdots \pi_{J_{k-1}}\left( \binom{|J_k|+1}{|J_k|} \pi_{J_k\cup\{i_1\}}\cdot \pi_{J_{k+1}}\right)\pi_{J_{k+2}}\cdots \pi_{J_m}\\
 &= \pi_{J_1}\cdots \pi_{J_{k-1}}\left( \binom{|J_k|+1}{|J_k|} \binom{|J_k|+1+|J_{k+1}|}{|J_k|+1} \pi_{J_k\cup\{i_1\}\cup J_{k+1}}\right)\pi_{J_{k+2}}\cdots \pi_{J_m}\\
 &= \binom{|J_k|+1}{|J_k|}\binom{|J_k|+1+|J_{k+1}|}{|J_k|+1}\pi_{J'}
\end{align*}
by Proposition~\ref{prop: joint lemma 2 in y} again.
Thus, in either case of (1)-(3), we see that the product $\pi_{i_1}\pi_J$ is an integer multiple of $\pi_{J'}$.

If $J'= J$, then we have $i_1\in J_k$ for some $1\le k\le m$.
In this case, let us write $J_k=[a,b]$ for some $1\le a\le b\le n-1$.
Then, Proposition~\ref{prop: joint lemma in y} implies that
\begin{equation}
\begin{split}\label{eq: final computations 10}
 \pi_{i_1}\pi_J
 &=\pi_{J_1}\cdots \pi_{J_{k-1}}(\pi_{i_1}\pi_{J_k})\pi_{J_{k+1}}\cdots \pi_{J_m}\\
 &=\pi_{J_1}\cdots \pi_{J_{k-1}}\Big((b-i_1+1)\pi_{[a-1,b]}+(i_1-a+1)\pi_{[a,b+1]}\Big)\pi_{J_{k+1}}\cdots \pi_{J_m} \\
 &=(b-i_1+1)\pi_{J_1}\cdots \pi_{J_{k-1}}\pi_{[a-1,b]}\pi_{J_{k+1}}\cdots \pi_{J_m} \\
 &\hspace{50pt}
 + (i_1-a+1)\pi_{J_1}\cdots \pi_{J_{k-1}}\pi_{[a,b+1]}\pi_{J_{k+1}}\cdots \pi_{J_m}.
 \end{split}
\end{equation}
For the first summand in the last expression, we have
\begin{equation}\label{eq: -infty}
\begin{split}
 &\pi_{J_1}\cdots \pi_{J_{k-1}}\pi_{[a-1,b]}\pi_{J_{k+1}}\cdots \pi_{J_m} \\
 &\hspace{20pt}=
 \begin{cases}
  \pi_{J\cup\{a-1\}} \quad &\text{(if $\max J_{k-1}< a-2$)}, \vspace{10pt}\\
  \displaystyle{\binom{|J_{k-1}|+b-a+2}{|J_{k-1}|}\pi_{J\cup\{a-1\}}} &\text{(if $\max J_{k-1}=a-2$)},
 \end{cases}
\end{split}
\end{equation}
where the first case follows from the definition of $\pi_{J\cup\{a-1\}}$, and the second case follows by applying Proposition~\ref{prop: joint lemma 2 in y} to $\pi_{J_{k-1}}\pi_{[a-1,b]}$ in the left hand side.
Here, we take the convention $J_0=\{-\infty\}$ 
since only the first case of \eqref{eq: -infty} holds when $k=1$.
Similarly, for the second summand in the last expression of \eqref{eq: final computations 10}, we have
\begin{equation}\label{eq: +infty}
\begin{split}
 &\pi_{J_1}\cdots \pi_{J_{k-1}}\pi_{[a,b+1]}\pi_{J_{k+1}}\cdots \pi_{J_m}\\
 &\hspace{20pt}=
 \begin{cases}
  \pi_{J\cup\{b+1\}} \quad &\text{(if $\min J_{k+1}> b+2$)}, \vspace{10pt}\\
  \displaystyle{\binom{|J_{k+1}|+b-a+2}{b-a+2}\pi_{J\cup\{b+1\}}} &\text{(if $\min J_{k+1}=b+2$)}.
 \end{cases}
\end{split}
\end{equation}
Again, we take the convention $J_{m+1}=\{+\infty\}$ since only the first case of \eqref{eq: +infty} holds when $k=m$.
Thus, it follows from \eqref{eq: final computations 10} that $\pi_{i_1}\pi_J$ is a linear combination of $\pi_{J\cup\{a-1\}}$ and $\pi_{J\cup\{b+1\}}$.
Applying this result to the right hand side of \eqref{eq: final computations 20}, we see that $f$ is a linear combination of $\pi_J$ for $J\subseteq[n-1]$.

Finally, we consider the case $d>n-1$.
In this case, the number of $\pi_{i_k}$ appearing in \eqref{eq: monomial in truncated e1} is greater than $n-1$.
Hence, we can write
\begin{align*}
 f = \pi_{i_1}\pi_{i_2}\cdots \pi_{i_d} 
 = ( \pi_{i_1}\pi_{i_2}\cdots \pi_{i_{n-1}} ) \cdot (\pi_{i_{n}}\pi_{i_{n+1}}\cdots \pi_{i_d} ).
\end{align*} 
Applying the argument used in the previous case to the product $\pi_{i_1}\pi_{i_2}\cdots \pi_{i_{n-1}}$, we see that it is an integer multiple of $\pi_{[n-1]}$. Since we have
\begin{align*}
\pi_{[n-1]} \cdot \pi_{i_n} = 0
\end{align*} 
by Proposition~\ref{prop: joint lemma in y}, we see that $f=0$ in this case. This completes the proof.
\end{proof}

\bigskip


\end{document}